\documentclass[12pt,reqno]{amsart}

\usepackage[utf8]{inputenc}
\usepackage[T1]{fontenc}

\usepackage[usenames,dvipsnames]{color}

\usepackage{amsfonts, amsmath, amssymb, amscd, latexsym, amsthm}

\usepackage[english]{babel}

\usepackage{enumerate}
\usepackage{longtable}
\usepackage{multirow,longtable,array}
\usepackage{geometry}
\usepackage{float}
\usepackage{adjustbox}
\usepackage{rotating}
\usepackage{tabularx}
\usepackage{booktabs}
\usepackage{array}

\usepackage{tikz}
\usetikzlibrary{shapes,arrows}
\usepackage{xypic}
\usepackage{pifont}

\usepackage[pagebackref]{hyperref}
\renewcommand*{\backref}[1]{}
\newcolumntype{Y}{>{\centering\arraybackslash}X}
\renewcommand*{\backrefalt}[4]{%
	\ifcase #1
	(\tt not cited)%
	\or
	(\tt cited on page~#2)%
	\else
	(\tt cited on pages~#2)%
	\fi}
\usepackage{hyperref}
\hypersetup{
	colorlinks = true,
	urlcolor   = blue,
	citecolor  = blue
}

\newtheorem{theorem}{Theorem}[section]
\newtheorem{lemma}[theorem]{Lemma}

\theoremstyle{remark}
\newtheorem{remark}[theorem]{\bf Remark}

\newtheorem{notation}[theorem]{\bf Notation}

\newtheorem{definition}[theorem]{\bf Definition}

\def\NN{\mathbb{N}}

\def\QQ{\mathbb{Q}}

\def\ZZ{\mathbb{Z}}

\def\sqf{\mathrm{sqf}}
\begin{document}
	
	\def\2r{\mathrm{rank}}
	\def\rg{\mathrm{rank}}
	\def\C{\mathrm{C}}
	\def\Y{\mathbf{Y}}
	\def\V{\mathbf{V}}
	\def\vep{\varepsilon}
	
	\selectlanguage{english}
	
	\title[On the class number of real multiquadratic fields]{On the class number of real multiquadratic fields}

	\author[M. M. Chems-Eddin]{Mohamed Mahmoud Chems-Eddin}
	\address{Mohamed Mahmoud CHEMS-EDDIN: Department of Mathematics, Faculty of Sciences Dhar El Mahraz, Sidi Mohamed Ben Abdellah University, Fez,  Morocco}
	\email{2m.chemseddin@gmail.com}
	
	\author[H. El Mamry]{Hamza El Mamry}
	\address{Hamza EL MAMRY: Departement of Mathematics, Faculty of Sciences Dhar El Mahraz,  Sidi Mohamed Ben Abdellah University, Fez, Morocco}
	\email{Hamza.elmamry@usmba.ac.ma}
	
	\subjclass[2010]{11R27, 11R11, 11R20, 11R04, 11R29.}
	\keywords{ $2$-Class Number,   Multiquadratic Number Fields,  Class Number Parity, Unit Group}
	
	\begin{abstract} 
		In their 1988 monograph, Conner and Hurrelbrink classified all real quadratic and biquadratic fields with odd  class number. They also investigated real quadratic fields whose $2$-class numbers  are equal to  $2$. Their results,  together with some well-known results of Ku{\v{c}}era, provide a complete classification of real quadratic number fields with $2$-class numbers equal to  $2$.
		
		The main aim of this paper is to provide a complete classification of all real biquadratic fields whose $2$-class numbers   equal   $2$; and  moreover, we  draw the complete list all real multiquadratic fields with odd class numbers.  
		Finally, as an appendix we give explicit formulas for $\sqrt{\varepsilon_d}$, where $\varepsilon_d$ denotes the fundamental unit of the real quadratic field $\QQ(\sqrt{d})$.	
	\end{abstract}
	
	\maketitle
	\tableofcontents%
	\section{\bf Introduction}\label{Sec1}

	The investigation of the  $2$-class group and the $2$-class number of a number field $k$ (denoted $A(k)$ and $h(k)$ respectively) was the subject of intensive old and recent interesting studies. This   problem goes back to  Gauss who introduced the first   techniques based on quadratic forms  in Section V of his   book {\it Disquisitiones Arithmeticae} to investigate this problem for real quadratic fields. Then several authors get interested to this problem and focussed on the investigation of the $2$-class group and the $2$-class number field multiquadaratic field. For instance Kaplan \cite{kaplan76} used quadratic forms to study the structure of $2$-class group of some families real quadratic fields. Thereafter,  several   authors used  different techniques and ideas to investigate this problem for the case of real multiquadratic fields  \cite{kuvcera1995parity,frohlich,BLS98,BS95,BS25I,ChemseddinGreenbergConjectureI,paritymohib,MouhibMouvahhedi,DMW,conner88}.   
	In particular, we have the following  interesting classifications which is due to Conner and Hurrelbrink (cf. \cite[Corollary 18.4]{conner88}).
	\begin{lemma}[\cite{{conner88}}, Corollary 18.4]\label{realQuadodd} Let $F$ be a real quadratic field. The class number of $F$ is odd   if and only if it takes one of the following forms:
		\begin{enumerate}[$    1)$]
			\item $F=\QQ(\sqrt{2})$, $\QQ(\sqrt{p})$ where $p$  is a   prime number congruent to $1\pmod 4$. 
			\item $F=\QQ(\sqrt{q})$, $\QQ(\sqrt{2q})$ or $\QQ(\sqrt{q_1q_2})$  where  $q$, $q_1$, $q_2$ are prime  numbers congruent to $3\pmod 4$.
		\end{enumerate}
	\end{lemma}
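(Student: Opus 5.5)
The plan is to use Gauss's genus theory for real quadratic fields, together with the relation between the narrow and the ordinary class group. Write $F=\QQ(\sqrt{d})$ with $d>1$ squarefree, let $D$ be its discriminant, and let $t$ be the number of primes ramified in $F/\QQ$. Genus theory says that the $2$-rank of the narrow class group $\mathrm{Cl}^+(F)$ equals $t-1$. Moreover, $h^+(F)=h(F)$ when $N(\varepsilon_d)=-1$, and $h^+(F)=2h(F)$ when $N(\varepsilon_d)=+1$. So $h(F)$ is odd in exactly two situations:
\begin{enumerate}[(a)]
\item $t=1$, so that $h^+(F)$ is odd;
\item $t=2$, the narrow $2$-class group is cyclic of order exactly $2$, and $N(\varepsilon_d)=1$.
\end{enumerate}

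\textbf{Step 1: $t=1$.} Then $D$ is $8$ or a prime discriminant $p^*$ with $p^*>0$, which forces $p\equiv 1\pmod 4$. This gives $F=\QQ(\sqrt 2)$ or $F=\QQ(\sqrt p)$. These fields have odd class number, which yields case 1).

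\textbf{Step 2: $t\ge 3$ is impossible.} In that case $\mathrm{Cl}^+(F)$ has $2$-rank at least $2$. Since $\mathrm{Cl}(F)$ is a quotient of $\mathrm{Cl}^+(F)$ by a subgroup of order at most $2$, the $2$-rank of $\mathrm{Cl}(F)$ is at least $1$, so $h(F)$ is even.

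\textbf{Step 3: $t=2$.} Write $D=D_1D_2$ as a product of two prime discriminants. The $4$-rank of $\mathrm{Cl}^+(F)$ is governed by Rédei's criterion: the narrow $2$-class group is cyclic of order $2$ exactly when the Rédei matrix is nonsingular, i.e. when the Kronecker symbols $(D_1/p_2)$ and $(D_2/p_1)$ do not both equal $1$. We split into the sign patterns of $D_1,D_2$.

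\emph{Both positive.} Then $D_1=p_1$ and $D_2=p_2$ (or $8$) with $p_i\equiv1\pmod 4$. The ideal above $p_1$ has norm $p_1$, a norm from $\QQ(\sqrt{D})$, and one shows that $-1$ is a norm everywhere locally. By the Hasse norm theorem $-1$ is then a global norm of an element, but this does not give $N\varepsilon=-1$ directly. The cleaner route is the following: when both $D_i>0$, the genus character at infinity is trivial, so the element of order $2$ coming from the ramified primes lies in the image of $\mathrm{Cl}$, i.e. survives in the ordinary class group. Concretely, the class of $\mathfrak p_1$ is nontrivial in $\mathrm{Cl}$: if it were principal, say $(\alpha)$ with $N\alpha=\pm p_1$, this would give $p_1\in\{d,\dots\}$-type units or a contradiction with genus characters. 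Hence $h$ is even.

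\emph{Both negative.} Then $D_i\in\{-4,-8,-q\}$ with $q\equiv 3\pmod 4$. This gives exactly $d=q$ ($D=-4\cdot-q$), $d=2q$ ($D=-8\cdot-q$), $d=q_1q_2$, and also $d=2$ (from $-4\cdot-8$, which is really $t=1$, so this pattern is excluded). For these, $N(\varepsilon_d)=+1$, because $-1$ is not a norm: it is not a local norm at a $q$-adic place, since $(-1/q)=-1$. Also, Rédei's condition holds automatically, because $(D_1/q_2)$ and $(D_2/q_1)$ are opposite by quadratic reciprocity. Therefore $h^+(F)=2\cdot(\text{odd})$ and $h(F)$ is odd. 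This gives case 2).

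\emph{Mixed signs.} We have $D<0$ times positive, so $D<0$, which is impossible for a real field. So this case cannot occur.

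The main obstacle is the both-positive subcase of Step 3. There the narrow $2$-rank is $1$, and one must show that the ordinary class number stays even whether or not $N\varepsilon_d=-1$. The first thing I would try is the ambiguous class number formula for $F/\QQ$ in the ordinary (wide) sense: the number of ambiguous classes equals $2^{t-1}/[E_\QQ:E_\QQ\cap N F^\times]$, and here $-1$ is a norm, since every prime involved is $\equiv1\pmod4$ or equals $2$. This yields $2$ ambiguous classes, so $h(F)$ is even.
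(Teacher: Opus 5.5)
The paper gives no proof of this lemma; it is quoted from Conner--Hurrelbrink, so your argument has to stand on its own. In outline it does. The reduction to $t\le 2$, the case $t=1$, and the exclusion of mixed signs are correct. So is the both-negative case: $-1$ is not a local norm at $q$, so $N(\varepsilon_d)=1$, while the narrow $2$-class group has order $2$. One small inaccuracy there: for $D_1=-4$ the two R\'edei symbols are not ``opposite''. If $q\equiv 3\pmod 8$ then $\left(\frac{-4}{q}\right)=\left(\frac{-q}{2}\right)=-1$. However, $\left(\frac{-4}{q}\right)=-1$ by itself already excludes a $C_4$-splitting, so the conclusion is unaffected.

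The both-positive subcase needs repair. Your ``concretely'' sentence, that the class of $\mathfrak p_1$ is nontrivial in $\mathrm{Cl}(F)$, is false. Take $F=\QQ(\sqrt{34})$, so $D=8\cdot 17$. Then $\mathfrak p_2=(6+\sqrt{34})$ and $\mathfrak p_{17}=(17-3\sqrt{34})$ are both principal, although $h(F)=2$. Here $N(\varepsilon_{34})=+1$ but $-1$ is still the norm of an element, and the nontrivial $2$-torsion class is not represented by any ramified prime. Drop that sentence.

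What does prove evenness is your last paragraph. By Chevalley's formula, $|\mathrm{Cl}(F)^G|=2^{t-1}/[\{\pm1\}:\{\pm1\}\cap N F^\times]=2$, because $d=p_1p_2$ or $2p$ is a sum of two squares, so $-1\in NF^\times$. Equivalently, you can make your ``genus character at infinity'' remark precise: $\QQ(\sqrt{D_1},\sqrt{D_2})$ is totally real and unramified over $F$ at every finite prime, hence a quadratic subfield of the wide Hilbert class field.

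Finally, Chevalley's formula together with $\mathrm{Cl}(F)^G=\mathrm{Cl}(F)[2]$ (since $\sigma$ acts by inversion) gives the whole lemma in one stroke. It yields $\mathrm{rank}_2\,\mathrm{Cl}(F)=t-1-e$, where $e=1$ exactly when some prime $\equiv 3\pmod 4$ divides $d$ (i.e.\ $d$ is not a sum of two squares). So $h(F)$ is odd if and only if $t=1$, or $t=2$ and such a prime divides $d$. This dispenses with R\'edei's criterion and the narrow/wide comparison altogether.
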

	
	\medskip 
	
	In the  previous result, Conner and Hurrelbrink classified all real quadratic number fields with odd class number. Moreover, they investigated real  quadratic number fields of $2$-class number equals to $2$. By combining their results (cf. \cite[Corollaries 19.7 and   19.8]{conner88})  with   Ku{\v{c}}era's work (cf. \cite[Theorems 1 and 2]{kuvcera1995parity}), we get the following complete classification of real quadratic fields of $2$-class numbers equal $2$.
	
	\medskip 
	
	\begin{lemma}\label{realQuadeven1New}
		Let $F$ be a real quadratic field $F$. Then $h_2(F)=2$ if and only if $F$ takes one of the following forms:
		\begin{enumerate}[$1)$]
			\item $F=\QQ(\sqrt{pq})$ or $\QQ(\sqrt{2pq})$  with  $\left(\frac{p}{q}\right)=-1$ or $p\equiv5\pmod8$.
			\item $F=\QQ(\sqrt{2q_1q_2})$ with $q_1\equiv3\pmod8$ or $q_2\equiv3\pmod8.$
			\item $F=\QQ(\sqrt{pq_1q_2})$ with $\left(\frac{p}{q_1}\right)=-1$ or $\left(\frac{p}{q_2}\right)=-1.$

			\item $F=\QQ(\sqrt{2p})$ with $\left(\frac{2}{p}\right)=-1$ or $[\left(\frac{2}{p}\right)=1$ and $\left(\frac{2}{p}\right)_4\not=\left(\frac{p}{2}\right)_4]$.

			\item $F=\QQ(\sqrt{p_1p_2})$ with $\left(\frac{p_1}{p_2}\right)=-1$ or $[\left(\frac{p_1}{p_2}\right)=1$ and $\left(\frac{p_1}{p_2}\right)_4\not=\left(\frac{p_2}{p_1}\right)_4]$.

		\end{enumerate}
		Here $p\equiv 1 \pmod4$ and $q\equiv q_1\equiv q_2\equiv3\pmod4$ are distinct prime numbers.
	\end{lemma}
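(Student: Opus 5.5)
The plan is to use genus theory and Rédei's 4-rank theory to reduce to a few shapes of discriminant, and then to settle the remaining 8-rank question with the results of Conner--Hurrelbrink and Ku\v{c}era. Write $F=\QQ(\sqrt{d})$ with $d>1$ squarefree, let $t$ be the number of primes ramified in $F$, and let $h^+$ be the narrow class number with $2$-part $h_2^+$. Two standard facts drive the argument.
\begin{enumerate}[$(a)$]
\item The narrow $2$-class group has rank $t-1$. The wide $2$-class group has rank $t-1$ if every odd prime divisor of $d$ is $\equiv 1\pmod 4$, and rank $t-2$ otherwise.
\item $h^+=h$ if $N(\varepsilon_d)=-1$, and $h^+=2h$ if $N(\varepsilon_d)=+1$. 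In particular $N(\varepsilon_d)=+1$ whenever some $q\equiv 3\pmod 4$ divides $d$.
\end{enumerate}
Since $h_2(F)=2$ forces the $2$-class group to be cyclic of order $2$, its rank is exactly $1$. This cuts the list of possible $d$ down to a finite number of shapes.

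\textbf{Case A: some prime $q\equiv3\pmod4$ divides $d$.} Here the wide rank is $t-2$, so we need $t=3$. Checking the residue of $d$ modulo $4$ (hence whether $2$ ramifies), the possible shapes are $d=pq$, $2pq$, $2q_1q_2$ and $pq_1q_2$. The shape $d=q_1q_2q_3$ has $t=4$, and shapes with $t=2$ such as $q$, $2q$, $q_1q_2$ give odd class number by Lemma~\ref{realQuadodd}.

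In these cases $N(\varepsilon_d)=1$, so $h_2=h_2^+/2$, and $h_2=2$ is equivalent to $h_2^+=4$. Since the narrow rank is $2$, this means the narrow group is $(2,2)$, i.e.\ its $4$-rank vanishes. I would compute the Rédei matrix over $\mathbb{F}_2$, whose entries are Legendre symbols of the ramified primes (and of $2$, $-1$) against each other. Its rank must be $t-1=2$, and I would translate this into the stated conditions:
\begin{itemize}
\item for $pq$ and $2pq$: $\left(\frac{p}{q}\right)=-1$ or $p\equiv5\pmod 8$;
\item for $2q_1q_2$: some $q_i\equiv3\pmod8$;
\item for $pq_1q_2$: some $\left(\frac{p}{q_i}\right)=-1$.
\end{itemize}
This is routine bookkeeping with quadratic reciprocity, including the supplementary laws at $2$. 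Conner--Hurrelbrink's Corollaries 19.7 and 19.8 can be cited to confirm it.

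\textbf{Case B: every odd prime divisor of $d$ is $\equiv1\pmod4$.} Now the wide rank is $t-1$, so $t=2$, which gives $d=2p$ or $d=p_1p_2$. The narrow group is cyclic, so $h_2^+=2$ exactly when its $4$-rank vanishes. By Rédei this happens iff $\left(\frac{2}{p}\right)=-1$ (respectively $\left(\frac{p_1}{p_2}\right)=-1$). In that case $N(\varepsilon_d)=-1$ because $h^+$ is not divisible by $4$, and so $h_2=2$.

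If instead the symbol is $+1$, then $4\mid h_2^+$, and $h_2=2$ holds iff $N(\varepsilon_d)=+1$ and $h_2^+=4$. Scholz's theorem in the form proved by Ku\v{c}era (Theorems 1 and 2 of his paper) handles this situation. It says that both conditions hold together precisely when the quartic symbols $\left(\frac{p_1}{p_2}\right)_4$ and $\left(\frac{p_2}{p_1}\right)_4$ (resp.\ $\left(\frac{2}{p}\right)_4$ and $\left(\frac{p}{2}\right)_4$) differ. When they agree, either $8\mid h_2^+$ or $N(\varepsilon_d)=-1$ with $h_2=h_2^+\ge4$, and in both cases $h_2\neq2$.

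\textbf{Conclusion and main obstacle.} Assembling Cases A and B gives exactly the list in the lemma, and the converse direction is the same computations read backwards. I expect the main obstacle to be the last step of Case B. There the $4$-rank is $1$, and the outcome hinges on both the $8$-rank and the sign of $N(\varepsilon_d)$, which genus theory alone cannot decide. I would not reprove this but import Scholz's reciprocity criterion via Ku\v{c}era's theorems. The rest, namely the shape enumeration and the Rédei-matrix rank computations, is mechanical.
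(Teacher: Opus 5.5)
Your proposal is correct and takes essentially the paper's route. The paper gives no argument beyond citing Conner--Hurrelbrink (Corollaries 19.7 and 19.8) for the shapes containing a prime $q\equiv 3\pmod 4$, and Ku\v{c}era's Theorems~1 and~2 (Scholz's criterion and its analogue for $2p$) for $d=2p$ and $d=p_1p_2$. Your genus-theory reduction and R\'edei $4$-rank computations unpack the first citation, and they do produce exactly the stated conditions for $pq$, $2pq$, $2q_1q_2$ and $pq_1q_2$; the second you import in the same way. One step in Case~B deserves an explicit line: the wide $2$-rank is $1$, so $2\mid h$, and $N(\varepsilon_d)=+1$ would then give $4\mid h^+$; hence $h_2^+=2$ forces $N(\varepsilon_d)=-1$.
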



		
		\medskip 
		For real multiquadratic fields of degrees $\geq 8$, a complete classification of those with odd class number is still lacking in the literature, however, we have some very interesting results in this direction on real multiquadratic fields of the form $ P(n)=\QQ(\sqrt{p_1},\sqrt{p_2},\cdots,\sqrt{p_n})$, where all the $p_i$   are prime numbers. The  multiquadratic fields of this form are called Fröhlich multiquadratic fields\label{frohfi} (cf. \cite{MJAGA}). In particular, according to Fröhlich (cf. \cite{frohlich}), when $n\geq 5$ the class number of $P(n)$ is even. For $n=4$, Mouhib (cf. \cite{paritymohib}) classified the Fröhlich multiquadratic fields $P(4)$ whose class number is odd.
		Later, we provide the list of all real multiquadratic fields $D(n):=\QQ(\sqrt{d_1},\sqrt{d_2},\cdots,\sqrt{d_n})$, \label{Dn} where $n\geq 1$ and all the $d_i$ are positive square-free integers,  for  which the class number is odd (cf. Lemma \ref{realBiquad}, Theorem \ref{triquawithodd} and Corollary \ref{duaqdriwithodd} below).

		\medskip


		For real biquadratic fields (i.e., $D(n)$ for $n=2$), Conner and Hurrelbrink (cf. \cite[Corollaries 21.2, 21.4 and Proposition 21.5]{conner88}) provided a complete classification of these fields with odd class number. The following  lemma provides   corrections to a minor inaccuracy   in their well-known classification.
		
		\medskip

		\begin{lemma}[cf. Lemma \ref{realBiquadCorre} below]\label{realBiquad} Let $K$ be a real biquadratic number field. The class number of $K$ is odd if and only if   it takes one of the following forms:
			\begin{enumerate}[$1)$]
				\item $K=\QQ(\sqrt{2},\sqrt{q})$,   $\QQ(\sqrt{q_1},\sqrt{q_2})$, $\QQ(\sqrt{2q_1},\sqrt{2q_2})$ or $\QQ(\sqrt{q_1q_2},\sqrt{q_2q_3}),$
				
				\item $K=\QQ(\sqrt{q},\sqrt{p})$ or $\QQ(\sqrt{2q},\sqrt{p})$ with $\left(\frac{p}{q}\right)=-1$ or $p\equiv5 \pmod 8$,
				\item	$K=\QQ(\sqrt{2},\sqrt{q_1q_2})$ with   $q_1\equiv3 \pmod 8$ or $q_2\equiv3 \pmod 8$, 
				\item $K=\QQ(\sqrt{p},\sqrt{q_1q_2})$  with $\left(\frac{q_1}{p}\right)=-1$ or $\left(\frac{q_2}{p}\right)=-1$,
				
				\item  $K=\QQ(\sqrt{2},\sqrt{p})$ with $\left(\frac{2}{p}\right)=-1$ or $[\left(\frac{2}{p}\right)=1$ and $\left(\frac{2}{p}\right)_4\not=\left(\frac{p}{2}\right)_4]$.
				
				\item $K=\QQ(\sqrt{p_1},\sqrt{p_2})$ with $\left(\frac{p_1}{p_2}\right)=-1$ or $[\left(\frac{p_1}{p_2}\right)=1$ and $\left(\frac{p_1}{p_2}\right)_4\not=\left(\frac{p_2}{p_1}\right)_4]$.
			\end{enumerate}
			Here, $p$, $p_1$, $p_2$  denote  primes numbers congruent to $1\pmod 4$ and $q$, $q_1$, $q_2$ denote  primes numbers congruent to $3\pmod 4$.
		\end{lemma}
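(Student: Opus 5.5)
The plan is to use Kuroda's class number formula for real biquadratic fields,
\[
h(K)=\tfrac{1}{4}\,q(K)\,h(k_1)h(k_2)h(k_3),
\]
where $k_1,k_2,k_3$ are the three quadratic subfields of $K$ and $q(K)=[E_K:E_1E_2E_3]\in\{1,2,4\}$ is the unit index. The class number $h(K)$ is odd exactly when the $2$-part of the right-hand side is trivial. This happens in two ways:
\begin{enumerate}[(a)]
\item all three $h(k_i)$ are odd and $q(K)=4$;
\item exactly one $h(k_i)$ has $2$-part $2$, the other two are odd, and $q(K)=2$.
\end{enumerate}
Any other distribution of $2$-parts is too large to be cancelled by $q(K)\le 4$.

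\textbf{Step 1 (the odd-subfield pattern).} Before using the formula, cut down the candidates with genus theory. If $h(K)$ is odd, then $K$ has no unramified quadratic extension. Applying the ambiguous class number formula to $K/k_i$ shows that each $k_i$ must have few ramified primes. Concretely, the set of primes ramified in $K$ must be small: at most three odd primes, counting $2$, with strong restrictions. Combining this with Lemma~\ref{realQuadodd} and Lemma~\ref{realQuadeven1New}, I list every triple $(k_1,k_2,k_3)$ satisfying (a) or (b). This yields finitely many shapes of the form $\QQ(\sqrt{a},\sqrt{b})$ with $a,b$ built from $2,p,q$. For example, $\QQ(\sqrt{q},\sqrt{p})$ has subfields $\QQ(\sqrt q)$, $\QQ(\sqrt p)$ and $\QQ(\sqrt{pq})$. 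The first two have odd class number, and the third has $h_2=2$ precisely under the condition in Lemma~\ref{realQuadeven1New}(1). So this field falls under (b).

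\textbf{Step 2 (the unit index).} For each shape I compute $q(K)$ by deciding which products $\varepsilon_{k_1}^{a}\varepsilon_{k_2}^{b}\varepsilon_{k_3}^{c}$ are squares in $K$. This uses the norms of the fundamental units and the standard criteria, such as $\sqrt{\varepsilon_q}$ and $\sqrt{2\varepsilon_{q_1q_2}}$ lying in the field. The explicit formulas for $\sqrt{\varepsilon_d}$ from the appendix would be used here.

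The two cases need different things:
\begin{itemize}
\item In case (a), e.g. $\QQ(\sqrt{q_1},\sqrt{q_2})$ or $\QQ(\sqrt2,\sqrt q)$, every fundamental unit has norm $1$. One must show that a product of two of them, and also a product of all three, admits a square root in $K$, so that $q(K)=4$.
\item In case (b), one must show $q(K)=2$ exactly, not $1$. When the $2$-class number condition fails, the corresponding $h_2(k_i)\ge4$ already forces $h(K)$ to be even. So the congruence and Legendre/quartic symbol conditions in the statement are inherited directly from Lemma~\ref{realQuadeven1New}.
\end{itemize}

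\textbf{Step 3 (conversely).} I check that all excluded configurations give even $h(K)$. These are, for instance, $\QQ(\sqrt{p},\sqrt{q_1q_2})$ with both symbols $+1$, and shapes like $\QQ(\sqrt{q},\sqrt{2p})$. The tool is either Step 1 (an unramified quadratic extension exists) or Kuroda's formula with too large a $2$-part. I also compare with Conner–Hurrelbrink's lists to locate their inaccuracy, which is presumably a case where the unit index was misjudged.

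\textbf{Main obstacle.} I expect the hardest part to be the unit-index computations, in particular proving $q(K)=4$ in the family $\QQ(\sqrt{q_1q_2},\sqrt{q_2q_3})$ and $\QQ(\sqrt{2q_1},\sqrt{2q_2})$. There one must decide whether $\sqrt{\varepsilon_{1}\varepsilon_2\varepsilon_3}$ lies in $K$. My first approach is to write $\varepsilon_d=\tfrac{1}{2}(\alpha+\beta\sqrt d)^2/m$, using the appendix formulas for $\sqrt{\varepsilon_d}$, with $m$ a divisor of the discriminant. Then I match the factors $\sqrt{m}$ across the three subfields.
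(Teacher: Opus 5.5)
Your reduction to cases (a) and (b) does not hold. Kuroda's formula gives $h_2(K)=\frac14\,q(K)\,h_2(k_1)h_2(k_2)h_2(k_3)$. So an odd $h(K)$ is arithmetically compatible with $q(K)=1$ and $h_2(k_1)h_2(k_2)h_2(k_3)=4$, i.e. with the patterns $(2,2,1)$ and $(4,1,1)$; these are not ``too large to be cancelled''.

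Both patterns occur among the candidates:
\begin{itemize}
\item $\QQ(\sqrt q,\sqrt{2p})$ with $p\equiv5\pmod8$ has $h_2(2p)=h_2(2pq)=2$.
\item $\QQ(\sqrt{p_1},\sqrt{p_2})$ can have $h_2(p_1p_2)=4$.
\end{itemize}
So your enumeration does not prove the ``only if'' direction. For the same reason, your claim that $h_2(k_i)\ge4$ ``already forces'' $h(K)$ to be even does not follow from Kuroda's formula.

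Case (b) is also set up wrongly. When $\prod h_2(k_i)=2$, integrality of Kuroda's formula already gives $q(K)\ge2$. The real task is therefore to exclude $q(K)=4$, and this does happen for a (b)-pattern field that is not on the list: $\QQ(\sqrt q,\sqrt{2q'})$ with $q$ or $q'\equiv3\pmod8$. There $q(K)=4$ and $h_2(K)=2$ (cf. Theorem~\ref{classqrs}, form $D)$). So the conditions are not ``inherited directly'' from Lemma~\ref{realQuadeven1New}: you must first know over which quadratic subfield $K$ is unramified.

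The missing idea, which is essentially the paper's entire proof, is ramification.
\begin{itemize}
\item Since the inertia groups generate $\mathrm{Gal}(K/\QQ)$, $K$ is unramified over at most one of its quadratic subfields.
\item If $K/k$ is ramified, then $K\cap k^{(1)}=k$. Hence the norm map $\mathrm{Cl}(K)\to\mathrm{Cl}(k)$ is onto and $h(k)\mid h(K)$.
\item If $K/k$ is unramified and $h(K)$ is odd, then $k^{(1)}/K$ is an unramified abelian $2$-extension, so $k^{(1)}=K$ and $h_2(k)=2$. Conversely, $h_2(k)=2$ makes $A(k)$ cyclic, the $2$-class field tower of $k$ stops at $K$, and $h(K)$ is odd.
\end{itemize}
This rules out all the patterns above at once. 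It also yields items $2)$--$6)$ directly from Lemma~\ref{realQuadeven1New}, with no unit index computation.

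Only when $K$ is ramified over all three subfields do you need $q(K)=4$. In that case all $h(k_i)$ must be odd, and Lemma~\ref{realQuadodd} leaves exactly the four families of item $1)$. Here $q(K)=4$ is immediate from Lemma~\ref{unitsrs}. For instance, $\sqrt{\vep_{q_1q_2}\vep_{q_2q_3}}$ and $\sqrt{\vep_{q_1q_2}\vep_{q_1q_3}}$ lie in $K$, because $\sqrt{\vep_{rs}}=\frac12(a\sqrt r+b\sqrt s)$. So the step you flagged as the main obstacle is in fact the easy part.
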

		\medskip 
		The classifications and results presented above naturally lead to the following questions:
		\medskip
		\begin{center}
			{\bf What are real multiquadratic fields with odd class numbers (resp. with $2$-class numbers equal  $2$)?}
		\end{center}
		\medskip 
		
		Our main goal in this paper is to classify all real biquadratic number fields whose $2$-class numbers equal  to $2$, moreover, as applications we draw the list of all real multiquadratic number fields with odd class numbers. We believe that this is a   challenging problem, which explains why, to the best of our knowledge, no classification of real biquadratic fields with $2$-class number equal to $2$ has been obtained to date. This natural question has remained open since the publication of the book by Conner and Hurrelbrink in 1988. We believe that the subsequent development of computational techniques for determining the unit groups of multiquadratic fields (cf., e.g.,  \cite{Benchems,BS25I,	MJAGA,firstpaper,wada}), together with the theory of $2$-class field towers of real quadratic fields developed by Benjamin, Lemmermeyer and Snyder (cf.  \cite{BLS98,BS95,BS25I,BS25}),
		were an essential ingredients in obtaining a complete classification. In particular, the  classification of real quadratic fields whose $2$-class field tower is abelian provided a crucial tool that allowed us to substantially simplify our computations.

		\medskip 
		
		Before describing the strategy implemented to obtain our classification, we recall the following definition introduced in \cite{ChemseddinGreenbergConjectureI} and fix the notation that will be used throughout the paper.

		\begin{definition}
			A number field $k$  is said QO-field if it is a quadratic extension of certain number field $k'$ whose class number is odd. We shall call $k'$ a base field of the QO-field $k$ and that $k/k'$ is a QO-extension. An extension of  QO-fields is an extension of number fields $L/M$ such that $L$ and $M$ are QO-fields.	\end{definition}
		
		Let $h (d)$ (resp. $h_2(d)$, $\varepsilon_d$) denote the  class number (resp. $2$-class number, fundamental unit) of 
		the real quadratic field $\QQ(\sqrt{d})$, with $d$ a positive square-free integer. Let $E_k$ denote the unit group of a number field $k$. We also define the following notation.

		\begin{notation}\label{squarenot}We say that $\sqrt{\varepsilon_d }$, the square-root of the fundamental unit of  a real quadratic field $\QQ(\sqrt{d})$ equals $[\sqrt{m_1},\sqrt{m_2}]$, where $m_1$ and $m_2$ are two positive integers (i.e,  $\sqrt{\varepsilon_d }=[\sqrt{m_1},\sqrt{m_2}]$), if $\sqrt{\varepsilon_d }=n_1\sqrt{m_1}+n_2\sqrt{m_2}$ for some     integers or semi-integers  $n_1$ and $n_2$. 
		\end{notation}
		
		Now let $K=\QQ(\sqrt{d_1},\sqrt{d_2})$ be a real biquadratic number field, where $d_1$ and $d_2$ are two positive square-free integers. Let $d_3=\sqf(d_1d_2)$ (i.e. the square-free part of $d_1d_2$).	Our strategy for classifying all real biquadratic fields $K$ with $2$-class number equals to $2$ is as follows. 
		
		By Kuroda's class number formula (cf. \cite[p. 247]{Lemmermeyer94}), we have:
		\begin{equation}\label{equa1}
			h_2(K) = \frac{1}{4}\cdot q(K)\cdot h_2(d_1)\cdot h_2(d_2)\cdot h_2(d_3).
		\end{equation}
		Here   $q(K):= [E_K: \prod_{i=1}^{3}E_{\QQ(\sqrt{d_i})}]$ denotes the unit index of $K$.
		The investigation of unit index  $q(K)$ is difficult and  was the subject of intensive interesting studies (cf. for example \cite{AzZektaous18,Chanetall}).
		In the last section of this paper we give some useful computations that can be used to deduce the value of $q(K)$ whenever
	$d_1$ and $d_2$  have	a small number of prime divisors (i.e., $2$, $3$ or $4$ prime number of divisors).

	It follows form Kuroda's class number formula (i.e., \eqref{equa1}) that we have:
		\begin{equation}\label{equa2}
			h_2(K)=2  \text{ if and only if  }  q(K) h_2(d_1) h_2(d_2)h_2(d_3)=8. 
		\end{equation}
		So assuming that $h_2(K)=2$, it follows (from \eqref{equa2}) that $K$   satisfies one of the following conditions:
	
	\begin{equation}
		K \text{ is a QO-field},
	\end{equation}
	or
	\begin{equation}
		h_2(d_1)= h_2(d_2)=h_2(d_3)=2 \text{ and }  q(K)=1 .
	\end{equation}

	Notice that, by Lemma \ref{realQuadodd},  a real biquadratic field  $K$  that is a QO-field  is in one of the following forms:
	\begin{eqnarray}\label{forms1ofk}
		K=\QQ(\sqrt{q},\sqrt{d}), \; \QQ(\sqrt{2q},\sqrt{d}), \; \QQ(\sqrt{q_1q_2},\sqrt{d}),   \;  \QQ(\sqrt{2},\sqrt{d}) \text{ or }   \QQ(\sqrt{p},\sqrt{d}).
	\end{eqnarray}
	where $q\equiv q_1\equiv q_2\equiv 3\pmod 4$, $p\equiv1 \pmod 4$ are prime numbers and $d$ is a positive square-free integer.
	Moreover,  the forms of real biquadratic fields $K$ such that $h_2(d_1)= h_2(d_2)=h_2(d_3)=2$ are given in Table \ref{biquaNotQO}, by using Lemma  \ref{realQuadeven1New}.

	{
		\setlength{\tabcolsep}{12pt}
		\renewcommand{\arraystretch}{1.6}
		
		\begin{longtable}{|c|c|c| p{6cm}|c|}
			
			\toprule
			$d_1$ & $d_2$ & $d_3$ & \text{Conditions} & $K$ \\
			\midrule
			\endfirsthead
			
			\toprule
			$d_1$ & $d_2$ & $d_3$ & \text{Conditions} & $K$ \\
			\midrule
			\endhead
			
			\hline
			\endfoot
			
			\bottomrule
			\caption{Fields $K$ such that $h_2(d_1)= h_2(d_2)=h_2(d_3)=2$} \label{biquaNotQO} \\
			\endlastfoot
			
			$pq_1$ & $2q_1q_2$  & $2pq_2$ &[$\left(\frac{p}{q_1}\right)=\left(\frac{p}{q_2}\right)=-1$ or $p\equiv5\pmod8$] and [$q_1\equiv 3\pmod 8$ or $q_2\equiv 3\pmod 8$] & 	$ \mathbb{Q}(\sqrt{pq_1},\sqrt{2q_1q_2})$\\
			\hline
			$pq_1q_2$  & $2q_1q_2$  &  $2p$ & $p\equiv5\pmod8$, [$\left(\frac{p}{q_1}\right)=-1$ or $\left(\frac{p}{q_2}\right)=-1$] \text{ and } [$q_1\equiv 3\pmod 8$ \text{ or }$q_2\equiv 3\pmod 8$]&$ \mathbb{Q}(\sqrt{pq_1q_2},\sqrt{2q_1q_2})$ \\
			
			\hline
			$pq_1q_2$  & $2q_1q_2$  &  $2p$ & $p\equiv1\pmod8$, $\left(\frac{2}{p}\right)_4\neq \left(\frac{p}{2}\right)_4$,  [$\left(\frac{p}{q_1}\right)=-1$ or $\left(\frac{p}{q_2}\right)=-1$] \text{ and } [$q_1\equiv 3\pmod 8$ \text{ or }$q_2\equiv 3\pmod 8$]&$ \mathbb{Q}(\sqrt{pq_1q_2},\sqrt{2q_1q_2})$ \\

			\hline
			$p_1q$ &  $p_2q$  & $p_1p_2$ &[$\left(\frac{p_1}{q}\right)=-1$ or $p_1\equiv 5\pmod 8$], [$\left(\frac{p_2}{q}\right)=-1$ or $p_2\equiv 5\pmod 8$] \text{ and } $\left(\frac{p_1}{p_2}\right)=-1$ &$ \mathbb{Q}(\sqrt{pq},\sqrt{p_2q})$ \\

			\hline
			$2p_1q$ &  $p_1p_2$  & $2qp_2$ &[$\left(\frac{p_1}{q}\right)=-1$ or $p_1\equiv 5\pmod 8$], [$\left(\frac{p_2}{q}\right)=-1$ or $p_2\equiv 5\pmod 8$] \text{ and } $\left(\frac{p_1}{p_2}\right)=-1$ &$ \mathbb{Q}(\sqrt{2p_1q},\sqrt{p_1p_2})$ \\
			
			\hline
			$p_1q$ &  $p_2q$  & $p_1p_2$ &[$\left(\frac{p_1}{q}\right)=-1$ or $p_1\equiv 5\pmod 8$], [$\left(\frac{p_2}{q}\right)=-1$ or $p_2\equiv 5\pmod 8$],  $\left(\frac{p_1}{p_2}\right)=1$\text{ and } $\left(\frac{p_1}{p_2}\right)_4\neq \left(\frac{p_2}{p_1}\right)_4$ &$ \mathbb{Q}(\sqrt{pq},\sqrt{p_2q})$ \\
			
			\hline
			$2p_1q$ &  $p_1p_2$  & $2qp_2$ &[$\left(\frac{p_1}{q}\right)=-1$ or $p_1\equiv 5\pmod 8$], [$\left(\frac{p_2}{q}\right)=-1$ or $p_2\equiv 5\pmod 8$],  $\left(\frac{p_1}{p_2}\right)=1$ \text{ and } $\left(\frac{p_1}{p_2}\right)_4\neq \left(\frac{p_2}{p_1}\right)_4$ &$ \mathbb{Q}(\sqrt{2p_1q},\sqrt{p_1p_2})$ \\

			\hline
			$p_1q_1q_2$ &  $p_2q_1q_2$  & $p_1p_2$ &[$\left(\frac{p_1}{q_1}\right)=-1$ or $\left(\frac{p_1}{q_2}\right)=-1$], [$\left(\frac{p_2}{q_1}\right)=-1$ or $\left(\frac{p_2}{q_2}\right)=-1$] \text{and} $\left(\frac{p_1}{p_2}\right)=-1$ &$ \mathbb{Q}(\sqrt{p_1q_1q_2},\sqrt{p_2q_1q_2})$ \\

			\hline
			$p_1q_1q_2$ &  $p_2q_1q_2$  & $p_1p_2$ &[$\left(\frac{p_1}{q_1}\right)=-1$ or $\left(\frac{p_1}{q_2}\right)=-1$], [$\left(\frac{p_2}{q_1}\right)=-1$ or $\left(\frac{p_2}{q_2}\right)=-1$],  $\left(\frac{p_1}{p_2}\right)=1$ \text{and} $\left(\frac{p_1}{p_2}\right)_4\neq \left(\frac{p_2}{p_1}\right)_4$ &$ \mathbb{Q}(\sqrt{p_1q_1q_2},\sqrt{p_2q_1q_2})$ \\

			\hline
			$2p_1$ &  $2p_2$  & $p_1p_2$ &$p_1\equiv p_2\equiv5\pmod8 $,  \text{ and } $\left(\frac{p_1}{p_2}\right)=-1$ &$ \mathbb{Q}(\sqrt{2p_1},\sqrt{2p_2})$ \\
			
			\hline
			$2p_1$ &  $2p_2$  & $p_1p_2$ &$p_1\equiv p_2\equiv5\pmod8 $,  $\left(\frac{p_1}{p_2}\right)=1$ \text{ and } $\left(\frac{p_1}{p_2}\right)_4\neq \left(\frac{p_2}{p_1}\right)_4$ &$ \mathbb{Q}(\sqrt{2p_1},\sqrt{2p_2})$ \\

			\hline
			$2p_1$ &  $2p_2$  & $p_1p_2$ &$p_1\equiv5\pmod8$, $p_2\equiv1\pmod8 $,   $\left(\frac{2}{p_2}\right)_4\neq \left(\frac{p_2}{2}\right)_4$\text{ and } $\left(\frac{p_1}{p_2}\right)=-1$ &$ \mathbb{Q}(\sqrt{2p_1},\sqrt{2p_2})$ \\

			\hline
			$2p_1$ &  $2p_2$  & $p_1p_2$ &$p_1\equiv5\pmod8$, $p_2\equiv1\pmod8 $, $\left(\frac{2}{p_2}\right)_4\neq \left(\frac{p_2}{2}\right)_4$, $\left(\frac{p_1}{p_2}\right)=1$  \text{ and }  $\left(\frac{p_1}{p_2}\right)_4\neq \left(\frac{p_2}{p_1}\right)_4$  &$ \mathbb{Q}(\sqrt{2p_1},\sqrt{2p_2})$ \\

			\hline
			$2p_1$ &  $2p_2$  & $p_1p_2$ &$p_1\equiv p_2\equiv1\pmod8 $, $\left(\frac{2}{p_1}\right)_4\neq \left(\frac{p_1}{2}\right)_4$, $\left(\frac{2}{p_2}\right)_4\neq \left(\frac{p_2}{2}\right)_4$  \text{ and } $\left(\frac{p_1}{p_2}\right)=-1$     &$ \mathbb{Q}(\sqrt{2p_1},\sqrt{2p_2})$ \\

			\hline
			$2p_1$ &  $2p_2$  & $p_1p_2$ &$p_1\equiv p_2\equiv1\pmod8 $, $\left(\frac{2}{p_1}\right)_4\neq \left(\frac{p_1}{2}\right)_4$, $\left(\frac{2}{p_2}\right)_4\neq \left(\frac{p_2}{2}\right)_4$,   $\left(\frac{p_1}{p_2}\right)=1$ \text{ and }    $\left(\frac{p_1}{p_2}\right)_4\neq \left(\frac{p_2}{p_1}\right)_4$     &$ \mathbb{Q}(\sqrt{2p_1},\sqrt{2p_2})$ \\

			\hline
			$p_1p_2$ &  $p_2p_3$  & $p_1p_3$ & $\left(\frac{p_1}{p_2}\right)=\left(\frac{p_2}{p_3}\right)=\left(\frac{p_1}{p_3}\right)=-1$ &$ \mathbb{Q}(\sqrt{p_1p_2},\sqrt{p_2p_3})$ \\

			\hline
			$p_1p_2$ &  $p_2p_3$  & $p_1p_3$ & $\left(\frac{p_1}{p_2}\right)=\left(\frac{p_2}{p_3}\right)=-1$, $\left(\frac{p_1}{p_3}\right)=1$\text{ and } $\left(\frac{p_1}{p_3}\right)_4\neq \left(\frac{p_3}{p_1}\right)_4$  &$ \mathbb{Q}(\sqrt{p_1p_2},\sqrt{p_2p_3})$ \\

			\hline
			$p_1p_2$ &  $p_2p_3$  & $p_1p_3$ & $\left(\frac{p_1}{p_2}\right)=-1$, $\left(\frac{p_2}{p_3}\right)=\left(\frac{p_1}{p_3}\right)=1$,  $\left(\frac{p_1}{p_3}\right)_4\neq \left(\frac{p_3}{p_1}\right)_4$ \text{ and } $\left(\frac{p_2}{p_3}\right)_4\neq \left(\frac{p_3}{p_2}\right)_4$ &$ \mathbb{Q}(\sqrt{p_1p_2},\sqrt{p_2p_3})$ \\
			
			\hline
			$p_1p_2$ &  $p_2p_3$  & $p_1p_3$ &  $\left(\frac{p_1}{p_2}\right)=\left(\frac{p_2}{p_3}\right)=\left(\frac{p_1}{p_3}\right)=1$,  $\left(\frac{p_1}{p_3}\right)_4\neq \left(\frac{p_3}{p_1}\right)_4$, $\left(\frac{p_2}{p_3}\right)_4\neq \left(\frac{p_3}{p_2}\right)_4$  \text{ and } $\left(\frac{p_1}{p_2}\right)_4\neq \left(\frac{p_2}{p_1}\right)_4$  &$ \mathbb{Q}(\sqrt{p_1p_2},\sqrt{p_2p_3})$ \\

		\end{longtable}
	}

	Now we state the following remark  based on  class field theory that will play an important role later in our proofs.
	\begin{remark}\label{abelianquaexten}
		Let $M/F$ be a quadratic unramified extension of  number fields.  Then $h_2(M)=2$ if and only if $h_2(F)=4$ and $F^{(1)}=F^{(2)}$, where 
		$F^{(1)}$ (resp. $ F^{(2)}$) the Hilbert $2$-class field  of $F$ (resp. $F^{(1)}$). In fact, by class field theory we have the following diagram: 
		\[
		\xymatrix@C=1.8cm{
			F
			\ar@{-}[r]
			\ar@/^1.5pc/[rr]^{A(F)}
			&
			M
			\ar@{-}[r]
			\ar@/_1.5pc/[rr]_{A(M)}
			&
			F^{(1)}
			\ar@{-}[r]
			&
			M^{(1)}
			\ar@{-}[r]
			&
			\mathcal{L}(M)=\mathcal{L}(F)
		}
		\]

		Here $\mathcal{L}(k)$ denotes the maximal unramified $2$-extension of a number field $k$.	So one deduces directly the result.	
\end{remark}

This remark will be used later for the case when $F$ is a real quadratic field; and thanks to the works  of 
Benjamin, Lemmermeyer  and Snyder (cf. \cite{BLS98,BS25I,BS25}) when know when we have $F^{(1)}=F^{(2)}$.

\section{\bf Real biquadratic fields that are QO-field with $2$-class number equals $2$}
Let $K$ be a real biquadratic number field that is a QO-field. By 	Lemma \ref{realQuadodd}, $K$ takes one of the following forms: 
$$K=\QQ(\sqrt{q},\sqrt{d}), \; \QQ(\sqrt{2q},\sqrt{d}), \; \QQ(\sqrt{q_1q_2},\sqrt{d}),   \;  \QQ(\sqrt{2},\sqrt{d}) \text{ or }   \QQ(\sqrt{p},\sqrt{d}),$$
where $q\equiv q_1\equiv q_2\equiv 3\pmod 4$, $p\equiv1 \pmod 4$ are prime numbers and $d$ is a positive square-free integer. The what follows, we  investigate each case in a  subsection. 

\medskip
Before starting our investigations, let us recall the following algorithm due to Wada \cite{wada}, that we will use to compute the   unit index $  q(K)$ appearing in Kuroda's formula (i.e., \eqref{equa1}). In fact, it describes a fundamental system  of units of a    multiquadratic field $k$. Let  $\tau_1$ and 
$\tau_2$ be two distinct elements of order $2$ in the Galois group of $k/\mathbb{Q}$. Let $k_1$, $k_2$ and $k_3$ be the three subextensions of $k$ invariant by  $\tau_1$,\label{wadaalgo}
$\tau_2$ and $\tau_3= \tau_1\tau_2$, respectively. Let $\varepsilon$ denote a unit of $k$. Then \label{algo wada}
$$\varepsilon^2=\varepsilon\varepsilon^{\tau_1}  \varepsilon\varepsilon^{\tau_2}(\varepsilon^{\tau_1}\varepsilon^{\tau_2})^{-1},$$
and we have, $\varepsilon\varepsilon^{\tau_1}\in E_{k_1}$, $\varepsilon\varepsilon^{\tau_2}\in E_{k_2}$  and $\varepsilon^{\tau_1}\varepsilon^{\tau_2}\in E_{k_3}$.
It follows that the unit group of $k$  
is generated by the elements of  $E_{k_1}$, $E_{k_2}$ and $E_{k_3}$, and the square roots of elements of   $E_{k_1}E_{k_2}E_{k_3}$ which are perfect squares in $k$.

\subsection{Real biquadratic fields  $K=\QQ(\sqrt{q},\sqrt{d})$ with $2$-class numbers equal  $2$}

The following theorem provides a complete classification of the real biquadratic fields
$K=\QQ(\sqrt{q},\sqrt{d})$ whose $2$-class number is~$2$.

\begin{theorem}\label{classqrs}
	Let $q\equiv3\pmod{4}$   be a prime number    and $d$ a positive square-free integer co-prime with $q$. Put 
	$K=\QQ(\sqrt{q},\sqrt{d})$.
	Then $h_2(K)=2$ if and only if $d$ is in one of the following forms:
	\begin{enumerate}[$A)$]
		\item $d=r$, where  $r\equiv1\pmod 8$ is a prime number such that     $\left(\frac{r}{q}\right)=1$ and  
		$h_2(qr)=4$.  
		\item  $d=rs$, where $r$ and $s$ are prime numbers such that,
		after a suitable permutation of $r$  and $s$, we have one of the following conditions:
		\begin{enumerate}[$1)$]
			\item $r\equiv5\pmod{8}$, $ s \equiv3\pmod{4}$, and
			\[
			\left(\left(\frac{q}{r}\right),\left(\frac{q}{s}\right),
			\left(\frac{r}{s}\right)\right)
			\in\{(i,j,k):i,j,k=\pm1\}
			\setminus\{(1,1,1),(1,-1,1)\},
			\]
			
			\item    $r\equiv1\pmod{8}$, $ s \equiv3\pmod{4}$ and  $\left(\dfrac{q}{r}\right)=\left(\dfrac{r}{s}\right)=-1$,
			\item $r\equiv q \equiv 7\pmod{8}$, $s\equiv3\pmod4$ and  $\left(\dfrac{q}{r}\right)=-\left(\dfrac{q}{s}\right)=\left(\dfrac{r}{s}\right)=1$,
			\item  $r\equiv s \equiv 3\pmod{8}$ and  $\left(\dfrac{q}{r}\right)=-\left(\dfrac{q}{s}\right)=\left(\dfrac{r}{s}\right)=1$,
			
			
			\item  $s\equiv7\pmod8, q \equiv 3\pmod{8}$, $r\equiv3\pmod4$  and  $\left(\dfrac{q}{r}\right)=\left(\dfrac{q}{s}\right)=-\left(\dfrac{r}{s}\right)=-1$,
			
			
			\item $s\equiv3\pmod8, q \equiv 7\pmod{8}$, $r\equiv3\pmod4$  and  $\left(\dfrac{q}{r}\right)=\left(\dfrac{q}{s}\right)=-\left(\dfrac{r}{s}\right)=-1$,
			

			\item $r\equiv1\pmod8, s \equiv 5\pmod{8}$ and  $\left(\dfrac{q}{r}\right)=-\left(\dfrac{q}{s}\right)=\left(\dfrac{r}{s}\right)=-1$,

			\item $r\equiv s\equiv5\pmod8$ and  $\left(\dfrac{q}{r}\right)=-\left(\dfrac{q}{s}\right)=-\left(\dfrac{r}{s}\right)=1$,
			\item $r\equiv1\pmod8, s\equiv5\pmod8,q\equiv3\pmod8$ and  $\left(\dfrac{q}{r}\right)=\left(\dfrac{q}{s}\right)=\left(\dfrac{r}{s}\right)=-1$.

		\end{enumerate}	
		\item $d=2p$, where $p\equiv 5\pmod8$  is a prime number.
		\item $d=2q'$, where $q'\equiv 3\pmod4$ is a prime number   such that $q$ or $q'$ is congruent to $3\pmod8.$
		
	\end{enumerate}
\end{theorem}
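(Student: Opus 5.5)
We use Kuroda's formula \eqref{equa1} with $d_1=q$, $d_2=d$, $d_3=\sqf(qd)=qd$. Since $h_2(q)=1$ by Lemma \ref{realQuadodd}, we get
\[
h_2(K)=\tfrac14\, q(K)\, h_2(d)\, h_2(qd),
\]
so $h_2(K)=2$ if and only if $q(K)h_2(d)h_2(qd)=8$. The unit index satisfies $q(K)\in\{1,2,4\}$. The plan is to first bound the number $t$ of prime divisors of $d$ using genus theory, and then treat each surviving shape of $d$ by computing $q(K)$ with Wada's algorithm.

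\textbf{Step 1: restricting the shape of $d$.} Genus theory gives $2^{t(d)-1}\mid h_2(d)$ up to the usual correction, namely the factor $2^{-1}$ when $d$ has a prime divisor $\equiv 3\pmod 4$. Combining $h_2(d)h_2(qd)\le 8$ with $q(K)\ge1$ forces $t\le 2$, or $t=3$ only in degenerate cases, which we would exclude directly. This leaves the following shapes of $d$:
\begin{itemize}
\item[(a)] $d=2$ or $d=q'$: here $K$ has odd class number by Lemma \ref{realBiquad}, so these are excluded.
\item[(b)] $d=p$ prime with $p\equiv1\pmod4$.
\item[(c)] $d=2p$ or $d=2q'$.
\item[(d)] $d=rs$ with $r,s$ odd primes.
\item[(e)] $d=2r$ with $r$ any prime.
\end{itemize}

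\textbf{Step 2: $d=r\equiv1\pmod4$.} Here $h_2(r)=1$, so $K/\QQ(\sqrt{qr})$ is an unramified quadratic extension, being the genus field. By Remark \ref{abelianquaexten}, $h_2(K)=2$ if and only if $h_2(qr)=4$ and $\QQ(\sqrt{qr})$ has abelian $2$-class field tower. We would use Lemma \ref{realQuadeven1New} and the rank and $4$-rank conditions (R\'edei) to sort the cases:
\begin{itemize}
\item If $\left(\frac{r}{q}\right)=-1$ or $r\equiv5\pmod8$, then $h_2(qr)=2$, which is too small.
\item Otherwise $4\mid h_2(qr)$. The abelian-tower criterion of Benjamin--Lemmermeyer--Snyder, applied to the cyclic case, should then reduce exactly to the condition $h_2(qr)=4$, giving case $A$.
\end{itemize}

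\textbf{Step 3: $d=2p$, $d=2q'$ and $d=2r$.} For $d=2p$ and $d=2q'$ we again have a QO-situation. We would use Lemma \ref{realQuadeven1New} for $h_2(2p)$, $h_2(2qp)$, $h_2(2q')$ and $h_2(2qq')$, and Wada's algorithm for $q(K)$. The key input is whether $\sqrt{\varepsilon_{2p}}$, $\sqrt{\varepsilon_{qq'}}$, and similar roots lie in $K$; the appendix formulas $\sqrt{\varepsilon_d}=[\sqrt{m_1},\sqrt{m_2}]$ decide this. We expect this to yield cases $C$ and $D$.

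\textbf{Step 4: $d=rs$, the main obstacle.} Here $h_2(rs)$ and $h_2(qrs)$ are each at least $1$ or $2$, depending on the residue classes of $r$ and $s$. We split by the classes of $r,s \bmod 8$ and by the three Legendre symbols. In each subcase we would:
\begin{enumerate}
\item compute $h_2(rs)$ via Lemma \ref{realQuadeven1New};
\item compute $h_2(qrs)$ via R\'edei matrices together with known results for three-prime discriminants;
\item determine $q(K)$ by checking, through the appendix, which products among $\varepsilon_q$, $\varepsilon_{rs}$, $\varepsilon_{qrs}$ are squares in $K$.
\end{enumerate}
The difficulty lies in the cases where $h_2(qrs)=4$ versus $8$ is not determined by $2$-rank alone. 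When $h_2(rs)=1$ we would again invoke Remark \ref{abelianquaexten}. Otherwise we would use $4$-rank and $8$-rank criteria (Kaplan, quartic symbols) to pin down $h_2(qrs)$, matched against $q(K)\in\{1,2\}$. Bookkeeping then yields the list $B1)$--$B9)$. We expect the unit-index determination combined with the $8$-rank analysis in this step to be the hardest part.
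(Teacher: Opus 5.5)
Your framework (Kuroda's formula with $h_2(q)=1$, genus theory to restrict $d$, Remark \ref{abelianquaexten} for $d=r$, the appendix for $q(K)$) is sound. For $d=rs$ it is a different route from the paper's. The paper forces $h_2(qrs)=2$ in the ramified case through $h_2(k)\mid h_2(K)$, and in the unramified case reads the conditions off the Benjamin--Snyder lists (Tables \ref{refddd} and \ref{qrsabelien}). You would instead compute $h_2(qrs)$ by R\'edei and $q(K)$ by units.

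This is workable, and easier than you expect:
\begin{itemize}
\item No 8-rank is needed. Whenever $h_2(qrs)=4$ is at stake, the wide $2$-rank of $\QQ(\sqrt{qrs})$ is $2$ and $N(\varepsilon_{qrs})=1$, so $h_2(qrs)=4$ iff the narrow $4$-rank vanishes.
\item For $r\equiv s\equiv 3\pmod 4$ no tower information is needed. By Lemma \ref{unitsrs}, $\sqrt{\varepsilon_q}\in\sqrt2\,\QQ(\sqrt q)$ and $\sqrt{\varepsilon_{rs}}\in\sqrt r\,\QQ(\sqrt{rs})$. The classes $2$ and $r$ already generate $\langle 2,q,r,s\rangle$ modulo $\langle q,rs\rangle$ and squares. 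So exactly one product involving $\varepsilon_{qrs}$ becomes a square in $K$, giving $q(K)=2$ and $h_2(K)=h_2(qrs)/2$.
\item One small repair in Step 1: $d=2rs$ is not excluded by genus theory alone, which only gives $h_2(2rs)h_2(2qrs)\ge 8$. Use that $K/\QQ(\sqrt{2qrs})$ is ramified at $2$, so $h_2(2qrs)\mid h_2(K)$, while $h_2(2qrs)\ge 4$.
\end{itemize}

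The genuine gap is the last sentence of Step 4. The bookkeeping is only asserted, and if you carry it out with exactly these tools it does not return B1)--B9).

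First counterexample: $q=3$, $d=161=7\cdot 23$.
\begin{itemize}
\item $h_2(3)=h_2(161)=1$, and $q(K)=2$ as just explained.
\item No splitting of $1932=4\cdot 483$ into two fundamental discriminants is of the second kind. So the narrow $2$-class group of $\QQ(\sqrt{483})$ is $(2,2,2)$, $h_2(483)=4$, and $h_2(K)=2$.
\item Yet $7\equiv 23\equiv 7\pmod 8$, $\left(\frac{3}{7}\right)=-1$ and $\left(\frac{3}{23}\right)=1$ satisfy none of B3)--B6).
\end{itemize}
Here $(r,s,q)=(7,3,23)$ is case $d_2$ of Table \ref{refddd}; B3)--B6) keep only the entries in which the table's $q$ is the $q$ of $K$.

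Second counterexample: $q=7$, $d=85$.
\begin{itemize}
\item $h_2(85)=2$, and R\'edei gives $h_2(595)=4$.
\item For the unit $\eta=18514+759\sqrt{595}$ one has $2\eta=(23\sqrt{35}+33\sqrt{17})^2$. Hence $\sqrt{\varepsilon_{595}}\in\sqrt{34}\,\QQ(\sqrt{595})$, in agreement with line 129 of Table \ref{tab8}.
\item Since $N(\varepsilon_{85})=-1$ and $\sqrt2,\sqrt{17},\sqrt{34}\notin K$, we get $q(K)=1$ and $h_2(K)=2$.
\item This is B9) with $q\equiv 7\pmod 8$, which the statement excludes.
\end{itemize}

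So the ``only if'' direction fails as written, and no completion of your plan can prove it. What your method actually yields is a longer list; for $r\equiv s\equiv 3\pmod 4$ it is simply $h_2(qrs)=4$, i.e.\ full R\'edei rank. That corrected list is what you would need to state and prove.
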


\begin{proof}$\bullet$ Let us firstly note that if $h_2(K)=2$ and $d$ is odd, then the rank of the $2$-class group of $K$ equals  $1$; and so,
	according to \cite[Lemma 3.1]{ChemseddinGreenbergConjectureI},   $d$ is in one of the following forms:
	\begin{enumerate}[$a)$]
		\item $d=r$ is a prime number such that $r\equiv1\pmod 8$ and  $\left(\frac{r}{q}\right)=1$,
		\item  $d=rs$ where $r$ and $s$ are two distinct prime numbers.
	\end{enumerate}
	Assume that $d$ takes the first form (i.e. as in $a)$). So the extension $K/\QQ(\sqrt{qr})$ is an unramified quadratic extension. Since the $2$-class group of 
	$	\QQ(\sqrt{qr})$ is cyclic (cf. \cite[Corollary 18.6]{conner88}), it follows that
	$h_2(K)=2$ if and only if 
	$h_2(qr)=4$ (this justifies the form $A)$).

	Now assume that $d$ takes the second form (i.e. as in $b)$). Let us characterize  the integers $d$ such that $h(K)=2$. Put $k=\QQ(\sqrt{qrs})$. We distinguish the following two cases: 
	
	\noindent\ding{229}	 	Assume that $K/k$ is  {ramified}, i.e., $rs\equiv 3\pmod4$. So $h_2(k)$ divides $h_2(K)$, and thus   $h_2(k)=1$ or $2$.  The case $h_2(k)=1$ is rejected as  $h_2(k)$ is divisible by $2$   (cf. \cite[Corollary 18.6]{conner88}). Therefore, $h_2(k)=2$;
	and so, by Lemma \ref{realQuadeven1New}, we have	$q\equiv s \equiv3 \pmod4$ and $r\equiv1\pmod4$ with $\left(\dfrac{q}{r}\right)=-1$ or $\left(\dfrac{s}{r}\right)=-1$.  We distinguish two sub-cases:
	\begin{enumerate}
		\item Let $r\equiv5\pmod 8$. 
		Using Tables \ref{tab7} and \ref{tab8}, and Wada's algorithm (cf. Page \pageref{wadaalgo}),   we check that $E_K= \langle{-1,\vep_q,\vep_{rs},\eta(r,s,q)\rangle}$, where $\eta(r,s,q)$ is as in Table \ref{formofeta}:
		\begin{table}[H]  
			\centering
			\scriptsize 
			\setlength{\tabcolsep}{4pt} 
			\renewcommand{\arraystretch}{1.3} 
			\begin{tabular}{c|c|c|c|c}
				\toprule
				$\left(\left(\dfrac{q}{r}\right),\left(\dfrac{q}{s}\right),\left(\dfrac{r}{s}\right)\right)$ & $\eta(5,7,3)$ & $\eta(5,3,3)$ & $\eta(5,3,7)$& $\eta(5,7,7)$\\
				\midrule
				$(1,1,-1)$	&  $\sqrt{\vep_{qrs}}$ or $\sqrt{\vep_{rs}\vep_{q}\vep_{qrs}}$ &  $\sqrt{\vep_{qrs}}$ & $\sqrt{\vep_{qrs}}$ or $\sqrt{\vep_{rs}\vep_{qrs}}$&$\sqrt{\vep_{qrs}}$\\
				\hline
				$(-1,-1,1)$  &  $\sqrt{\vep_{q}\vep_{qrs}}$ or $\sqrt{\vep_{rs}\vep_{q}\vep_{qrs}}$ & $\sqrt{\vep_{rs}\vep_{qrs}}$& $\sqrt{\vep_{q}\vep_{qrs}}$, $\sqrt{\vep_{rs}\vep_{q}\vep_{qrs}}$ & $\sqrt{\vep_{rs}\vep_{qrs}}$\\
				\hline
				$(-1,-1,-1)$  & $\sqrt{\vep_{q}\vep_{qrs}},\sqrt{\vep_{rs}\vep_{qrs}}$ or $\sqrt{\vep_{rs}\vep_{q}\vep_{qrs}}$ & $\sqrt{\vep_{rs}\vep_{q}\vep_{qrs}}$& $\sqrt{\vep_{q}\vep_{qrs}}$ or $\sqrt{\vep_{rs}\vep_{q}\vep_{qrs}}$ &$\sqrt{\vep_{q}\vep_{rs}\vep_{qrs}}$ \\
				\hline
				$(-1,1,1)$ &  $\sqrt{\vep_{q}\vep_{qrs}},\sqrt{\vep_{rs}\vep_{qrs}}$ or $\sqrt{\vep_{rs}\vep_{q}\vep_{qrs}}$ &  $\sqrt{\vep_{rs}\vep_{qrs}}$&$\sqrt{\vep_{q}\vep_{qrs}}$, $\sqrt{\vep_{rs}\vep_{qrs}}$ or $\sqrt{\vep_{q}\vep_{rs}\vep_{qrs}}$ & $\sqrt{\vep_{rs}\vep_{qrs}}$\\
				\hline
				$(-1,1,-1)$ & $\sqrt{\vep_{q}\vep_{qrs}}$ or $\sqrt{\vep_{rs} \vep_{q}\vep_{qrs}}$ & $\sqrt{\vep_{rs}\vep_{q}\vep_{qrs}}$&$\sqrt{\vep_{q}\vep_{qrs}}$, $\sqrt{\vep_{rs}\vep_{qrs}}$ or $\sqrt{\vep_{q}\vep_{rs}\vep_{qrs}}$ & $\sqrt{\vep_{q}\vep_{rs}\vep_{qrs}}$\\
				\hline
				$(1,-1,-1)$   & $\sqrt{\vep_{qrs}}$ or $\sqrt{\vep_{rs}\vep_{qrs}}$ &  $\sqrt{\vep_{qrs}}$& $\sqrt{\vep_{qrs}}$ or $\sqrt{\vep_{rs}\vep_{qrs}}$ &$\sqrt{\vep_{qrs}}$ \\
				\bottomrule
			\end{tabular}
			\caption{The form of $\eta(r,s,q)$}
			\label{formofeta}
		\end{table}

		Thus,	 $q(K)=2$.
		Since $h_2(rs)=h_2(qrs)=2$ (cf. Lemma \ref{realQuadeven1New}), it follows  by Kuroda's class number formula that we have $h_2(K)=2$ (this justifies the form $B)$-$1)$).	
		\item Now let $r\equiv1\pmod 8$. According to Lemma \ref{realQuadeven1New}, $[h_2(qrs)=2$ and $h_2(rs)=2]$  if and only if  $\left(\dfrac{s}{r}\right)=-1$. Furthermore, from
		Tables \ref{tab7} and \ref{tab8},  we deduce that $E_K= \langle{-1,\vep_q,\sqrt{\vep_{q}\vep_{rs}},\eta\rangle}$ where $\eta=\vep_{qrs}$ or $\sqrt{\vep_{qrs}}$ according to whether $\left(\dfrac{s}{r}\right)=\left(\dfrac{s}{r}\right)=-1$ or not.  Therefore, 
		$q(K)=2$ exactly when $\left(\dfrac{s}{r}\right)=\left(\dfrac{q}{r}\right)=-1$.
		It follows, by  Kuroda's class number formula  (cf. \eqref{equa1}), that    $h_2(K)=2$ if and only if $\left(\dfrac{q}{r}\right)= \left(\dfrac{s}{r}\right)=-1$ (this justifies the form $B)$-$2)$).
	\end{enumerate}
	
	\noindent\ding{229} 	Now assume that  $K/k$ is unramified, i.e., $rs\equiv1\pmod 4$. We distinguish two sub-cases: 
	\begin{enumerate}
		\item Let $r\equiv s\equiv 3\pmod4$, then   \cite[Theorem 1]{BS95} implies that $k^{(1)}=k^{(2)}$. Therefore, 
		$	h_2(K)=\frac{h_2(k)}{2}; $
	and thus $h_2(K)=2$ if and only if $h_2(k)=4$. Under the  conditions of this sub-case, the   quadratic fields $k=\QQ(\sqrt{qrs})$ such that $h_2(k)=4$ can be deduced from \cite[Form $d_i$, for $i\in \{1,...,8\}$, page 11]{BS25I}. In fact, we have $h_2(k)=4$ if and only if,    after a suitable permutation of the prime numbers  $q,r$ and $s$, we have $\left(\dfrac{r}{s}\right)=\left(\dfrac{s}{q}\right)=1$ and one of the    conditions in Table \ref{refddd} :

	\begin{center}
		\renewcommand{\arraystretch}{1.6}
		\begin{table}[H]
			\begin{tabular}{c|ccc|c}
				
				\toprule
				Cas
				& $r \pmod{8}$
				& $s \pmod{8}$
				& $q \pmod{8}$  &  $\left(\dfrac{r}{q}\right)$ \\
				\midrule
				$d_{1}$ & $7$ & $7$ & $7$ &  $-1$ \\
				$d_{2}$ & $7$ & $3$ & $7$ & $-1$  \\
				$d_{3}$ & $3$ & $3$ & $7$ & $-1$  \\
				$d_{4}$ & $3$ & $3$ & $3$ & $-1$ \\
				$d_{5}$ & $7$ & $3$ & $7$ & $1$\\
				$d_{6}$ & $7$ & $7$ & $3$ & $1$ \\
				$d_{7}$ & $3$ & $7$ & $3$ & $1$  \\
				$d_{8}$ & $3$ & $3$ & $7$ & $1$  \\
				\bottomrule
				
			\end{tabular}
			\caption{Conditions on $r$, $s$ and $q$ ensuring $h_2(qrs)=4$ in case $r\equiv s\equiv3\pmod4$.}
			\label{refddd}
		\end{table}
	\end{center}
	This justifies the   forms listed from $B)$-$3)$ to $B)$-$6)$.

	\item Let $r\equiv s \equiv 1\pmod4$. Notice that  if  $h_2(K)=2$, then the class field tower of $k$ is abelian, i.e., $k^{(1)}= k^{(2)}$. Thus,  using the classification in \cite[Form $c_i$, for $i\in \{15,...,20\}$, page 177-178]{BS95}, by interpreting the graphs corresponding to the abelian case to those of type 7  (for a definition of these graphs reader can see page 168 in the same reference), we deduce that $r$, $s$ and $q$ satisfy one of the conditions in Table \ref{qrsabelien}:
\end{enumerate}

\begin{center}
	\renewcommand{\arraystretch}{1.6}
	\begin{table}[H]
		\begin{tabular}{c|ccc|ccc}
			\toprule
			Cas
			& $r \pmod{8}$
			& $s \pmod{8}$
			& $q \pmod{8}$ & $\left(\dfrac{r}{s}\right)$
			& $\left(\dfrac{r}{q}\right)$ &  $\left(\dfrac{s}{q}\right)$ \\
			\midrule
			$c_{15}$ & $1$ & $5$ & $7$ & $-1$ & $-1$ & $1$ \\
			$c_{16}$ & $1$ & $5$ & $3$ & $-1$ & $-1$&$1$  \\
			$c_{17}$ & $5$ & $5$ & $7$ & $-1$ & $-1$ &$-1$ \\
			$c_{18}$ & $1$ & $5$ & $3$ & $-1$ & $-1$  &$-1$\\
			$c_{19}$ & $5$ & $5$ & $7$ & $-1$ & $1$  &$-1$\\
			$c_{20}$ & $5$ & $5$ & $3$ & $-1$ & $1$  &$-1$ \\
			\bottomrule
		\end{tabular}
		
		\caption{Conditions on $r$, $s$ and $q$ ensuring $h_2(qrs)=4$ in abelian $2$-tower case and $r\equiv s\equiv1\pmod4$} \label{qrsabelien}
	\end{table}
\end{center}

So let us assume that $q$, $r$ and $s$ satisfy one of the conditions presented in Table  \ref{qrsabelien}.
Notice that $h_2(qrs)=4$ and  since in all  cases we have $\left(\dfrac{r}{s}\right)=-1$,  it follows that $h_2(rs)=2$ (cf. Lemma \ref{realQuadeven1New}). Thus, by Kuroda's class number formula,  $h_2(K)=2$ if and only if $q(K)=1$.

On the other hand, using  Tables \ref{tab7} and \ref{tab8}, we check that  $E_K=\langle-1, \varepsilon,\varepsilon,\varepsilon\text{ or } \sqrt{\varepsilon}  \rangle$ according to whether $[r\equiv s\equiv5\pmod8, q\equiv7\pmod8   \text{ and } \left(\dfrac{q}{r}\right)=\left(\dfrac{r}{s}\right)=\left(\dfrac{s}{q}\right)=-1]$ or not. Hence, $h_2(K)=4$ or $2$, more precisely,
\begin{equation*}
	h_2(K)=4 \text{ if and only if } r\equiv s\equiv5\pmod8, q\equiv7\pmod8   \text{ and } \left(\dfrac{q}{r}\right)=\left(\dfrac{r}{s}\right)=\left(\dfrac{s}{q}\right)=-1.
\end{equation*}
Which 		  justifies the   forms listed from $B)$-$7)$ to $B)$-$9)$.

\indent$\bullet$	Finally let $d$ be an even positive square-free integer co-prime with $q$, i.e.,   $d=2d'$, where $d'$ is an odd positive square-free integer (co-prime with $q$). Notice that if $h_2(K)=2$, then the fact that the extension $K/\QQ(2qd')$ is ramified, implies that  $h_2(2qd')\in\{1,2\}$. It follows, by Lemma  \ref{realQuadeven1New}, that the integer $d'$ is in one of the following sub-cases:   $[d'=1]$, $[d'=p  \text{ is a prime number with }  p\equiv5\pmod8   \text{ or }  \left(\dfrac{p}{q}\right)=-1]$ or $[d'=q' \;\text{ is a prime numer with }   q'\equiv3\pmod8]$.

We note that the sub-case $d'=1$  is eliminated, as it implies, by  Lemma \ref{realBiquad},  that $h_2(K)=1$.
So let us assume that $d'=p$ is a prime number with  $p\equiv5\pmod8$ or  $\left(\dfrac{p}{q}\right)=-1$. If $p\equiv5\pmod8$, then $h_2(2p)=h_2(2pq)=2$ (cf. Lemma  \ref{realQuadeven1New}); and on the other hand, we have $E_K=\langle{-1,\vep_q,\vep_{2p}, \sqrt{\vep_{q}\vep_{2pq}}\rangle}$ (cf. Table \ref{tab7}), i.e.,  $q(K)=2$.
Thus, by Kuroda's class number formula, we have $h_2(K)=2$.  This justifies the form $C)$.

Now  let  $p\equiv1\pmod8$ and $\left(\dfrac{p}{q}\right)=-1$. So we have $q(K)\geq 2$ (In fact $\sqrt{\vep_{2p}\vep_{q}}$ or $\sqrt{\vep_{q}\vep_{2pq}} \in K$). As   Lemma  \ref{realQuadeven1New}  implies that $h_2(2pq)=2$ and $h_2(2p)\geq 4$, we  deduce that $h_2(K)\geq 4$.

To complete our proof let us consider that last sub-case, i.e., $d'=q'$ is a prime number such that $q'\equiv3\pmod8$. In this sub-case we have $E_K=\langle{-1,\vep_q,\sqrt{\vep_{q}\vep_{2q'}}, \sqrt{\vep_{q}\vep_{2qq'}}\rangle}$ (cf. Table \ref{tab7}) and $h_2(2qq')=2$ (cf. Lemma \ref{realQuadeven1New}), which gives $h_2(K)=2$. This justifies the form $D)$.

Hence, putting all of the above together, we obtain the complete classification stated in the theorem. This completes the proof.	
\end{proof}


\subsection{Real biquadratic fields  $K=\QQ(\sqrt{2q},\sqrt{d})$ with $2$-class numbers equal   $2$   } The theorem below yields a complete classification of  real biquadratic fields $K=\QQ(\sqrt{2q},\sqrt{d})$ whose $2$-class number is equal to $2$.
\begin{theorem}\label{class2qrs} Let $q\equiv3\pmod4$ be a prime number and $d$ a positive square-free integer. Put  $K=\QQ(\sqrt{2q},\sqrt{d})$. Then $h_2(K)=2$ if and only if, $d$ is in  one of the following forms: 
\begin{enumerate}[$A)$]
	\item $d=r,$ where $r$ is a prime number such that we have one of the following conditions:
	\begin{enumerate}[$1)$]
		\item $r\equiv3\pmod4$ and  $[ q\equiv3\pmod 8  \text{ or }   r\equiv3\pmod 8 ]$, 
		\item $r\equiv1\pmod4$ and $h_2(2qr)=4.$ 
	\end{enumerate}  
	\item $d=rs$, where $r$ and $s$ are prime numbers such that, after a suitable permutation of $r$ and $s$, we have one of the following conditions:
	
	\begin{enumerate}[\rm $1)$]
		
		\item $q\equiv s\equiv7\pmod{8}$, $r\equiv3\pmod{8}$ and
		$\left(\dfrac{q}{r}\right)=-\left(\dfrac{q}{s}\right)=\left(\dfrac{r}{s}\right)=1$,

		\item $r\equiv s\equiv3\pmod{8}$, $q\equiv7\pmod{8}$ and
		$\left(\dfrac{q}{r}\right)=-\left(\dfrac{q}{s}\right)=-\left(\dfrac{r}{s}\right)=-1$,
		\item $r\equiv5\pmod8$, $s\equiv1\pmod{8}$, $q\equiv3\pmod{8}$ and
		$\left(\dfrac{q}{r}\right)=-\left(\dfrac{q}{s}\right)=-\left(\dfrac{r}{s}\right)=1$,

		\item $r\equiv s\equiv5\pmod8$  and
		$\left(\dfrac{q}{r}\right)=-\left(\dfrac{q}{s}\right)=-\left(\dfrac{r}{s}\right)=1$,
		\item $r\equiv1\pmod8, s\equiv5\pmod8$  and
		$\left(\dfrac{q}{r}\right)=\left(\dfrac{q}{s}\right)=\left(\dfrac{r}{s}\right)=-1$.

	\end{enumerate}
	
	\item $d=2p,$   where $p\equiv1\pmod4$  prime number such that we have one of the following conditions:
	\begin{enumerate}[$1)$]
		\item $p\equiv5\pmod8$,
		\item $p\equiv1\pmod8$,  $\left(\frac{2}{p}\right)_4\left(\frac{p}{2}\right)_4=\left(\frac{p}{q}\right)=-1$.
	\end{enumerate}

	\item $d=2pq_1,$   where $p\equiv-q_1\equiv1\pmod4$ and $q_1\neq q$  are two prime numbers such that one of the following conditions holds:\begin{enumerate}[$1)$]
		\item $p\equiv5\pmod8$ and $\left(\dfrac{p}{q_1}\right)=-1$ or $[\left(\dfrac{p}{q_1}\right)=-\left(\dfrac{p}{q}\right)=1]$,
		
		\item $p\equiv1\pmod8$ and  $\left(\dfrac{p}{q_1}\right)=\left(\dfrac{p}{q}\right)=-1$. 
	\end{enumerate} 
\end{enumerate}
\end{theorem}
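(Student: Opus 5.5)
I would follow the same scheme as in the proof of Theorem \ref{classqrs}, with the base field $\QQ(\sqrt{2q})$ (which has odd class number by Lemma \ref{realQuadodd}) playing the role of $\QQ(\sqrt{q})$. Write $K=\QQ(\sqrt{2q},\sqrt{d})$ and let $k=\QQ(\sqrt{2qd})$ when $d$ is odd, and $k=\QQ(\sqrt{qd'})$ when $d=2d'$. In both cases $k$ is the third quadratic subfield other than $\QQ(\sqrt{2q})$ and $\QQ(\sqrt{d})$.

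\emph{Rank restriction.} The first step bounds the shape of $d$. If $h_2(K)=2$, the $2$-rank of $A(K)$ is at most $1$. By the genus/ambiguous class formula for the QO-extension $K/\QQ(\sqrt{2q})$ (as in \cite[Lemma 3.1]{ChemseddinGreenbergConjectureI}), this forces $d$, up to the factor $2$, to have at most two odd prime divisors. Concretely, I would reduce to $d=r$, $d=rs$, $d=2p$ or $d=2pq_1$.

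\emph{Ramified case.} Whenever $K/k$ is ramified, $h_2(k)\mid h_2(K)$. Lemma \ref{realQuadeven1New} then restricts $k$, hence $d$, and I would compute $q(K)$ with Wada's algorithm and Tables \ref{tab7}, \ref{tab8}, then apply Kuroda's formula \eqref{equa1}.
\begin{enumerate}[$1)$]
\item For $d=r\equiv3\pmod4$, all three subfields $\QQ(\sqrt{2q}),\QQ(\sqrt r),\QQ(\sqrt{2qr})$ give $h_2=1,1,2$ (the last only when $q$ or $r\equiv 3\pmod 8$). One expects $q(K)=4$, e.g.\ $\sqrt{\vep_{2q}\vep_r}$ and $\sqrt{\vep_{2q}\vep_{2qr}}$ in $K$, which yields A)-1.
\item For $d=2p$, $k=\QQ(\sqrt{qp})$. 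Lemma \ref{realQuadeven1New} gives $h_2(qp)=2$ if and only if $(\frac pq)=-1$ or $p\equiv5\pmod8$. Combining this with $h_2(2p)$ and the unit index yields C), where the condition $(\frac2p)_4(\frac p2)_4=-1$ is exactly $h_2(2p)=2$ for $p\equiv1\pmod 8$.
\item For $d=2pq_1$, $k=\QQ(\sqrt{pqq_1})$ and $\QQ(\sqrt{2pq_1})$. These are governed by items 1) and 3) of Lemma \ref{realQuadeven1New}, and a unit index computation gives D).
\end{enumerate}

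\emph{Unramified case.} When $K/\QQ(\sqrt{2qd})$ is unramified (namely $d=r\equiv1\pmod4$, or $d=rs\equiv1\pmod4$), I would invoke Remark \ref{abelianquaexten}: $h_2(K)=2$ if and only if $h_2(2qd)=4$ and $k^{(1)}=k^{(2)}$. For $d=r$ the $2$-class group of $\QQ(\sqrt{2qr})$ is cyclic, so it suffices that $h_2(2qr)=4$, giving A)-2. For $d=rs$ I would take the lists of real quadratic fields $\QQ(\sqrt{2qrs})$ with $2$-class group of order $4$ and abelian $2$-class field tower from Benjamin--Lemmermeyer--Snyder \cite{BLS98,BS25I,BS25}.

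\emph{Case $d=rs\equiv 1\pmod 4$.} When $r\equiv s\equiv3\pmod4$, the tower is abelian automatically in the relevant cases, and I would read off the conditions $h_2=4$. When $r\equiv s\equiv1\pmod4$, I would read off the abelian-tower forms, then check with Kuroda that $q(K)=1$, using $h_2(rs)=2$ from $(\frac rs)=-1$. The Tables \ref{tab7}, \ref{tab8} exclude the configurations where an extra $\sqrt{\vep_{2qrs}}$ appears. This should produce B)-1 through B)-5. The subcase $rs\equiv3\pmod4$ should be shown impossible, via $h_2(2qrs)\ge4$ with $q(K)\ge2$.

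\emph{Main obstacle.} The hard step is the $d=rs$ case. Extracting from \cite{BS25I,BS95} exactly which Legendre-symbol and mod-$8$ configurations give $h_2(2qrs)=4$ with abelian tower, and for $r\equiv s\equiv 1\pmod 4$ deciding $q(K)$ via which products of $\vep_{2q},\vep_{rs},\vep_{2qrs}$ are squares in $K$, is delicate and case-heavy. I would first tabulate the norms and the forms of $\sqrt{\vep_d}$ from the appendix, then run through the symbol triples systematically, as was done in Table \ref{formofeta}.
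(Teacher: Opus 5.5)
Your plan follows the paper's proof in its essentials. You use the same split of $d$ into $r$, $rs$, $2p$, $2pq_1$. Ramified cases are settled by $h_2(k)\mid h_2(K)$, Lemma \ref{realQuadeven1New} and a Wada/Kuroda count of $q(K)$ from Tables \ref{tab7}, \ref{tab8}. Unramified cases are settled by Remark \ref{abelianquaexten} and the Benjamin--Lemmermeyer--Snyder lists. Two steps are justified differently.

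\emph{Initial reduction.} The paper uses Kuroda's formula to get $h_2(2qd)\le 8$, hence at most three prime factors by genus theory. It then discards $d=rst$ separately via $h_2(rst)\ge 2$ and $h_2(2qrst)\ge 8$. Your ambiguous-class bound for $K/\QQ(\sqrt{2q})$ also works. The reason is that $-1$ and $\vep_{2q}$ have norm $+1$, so they are automatically local norms at ramified primes inert in $\QQ(\sqrt{2q})$; counting $t-1-e$ then gives $2$-rank $\ge 2$ whenever $d$ (coprime to $q$) has three odd prime factors. You must carry out this count yourself, since \cite[Lemma 3.1]{ChemseddinGreenbergConjectureI} is stated for $\QQ(\sqrt q,\sqrt d)$.

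\emph{The subcase $r\equiv s\equiv 1\pmod 4$.} The paper certifies $k^{(1)}=k^{(2)}$ with \cite[Theorem 2]{BLS98}, which removes $c_{19}$. You instead test $q(K)=1$. Since $h_2(rs)=2$ and $h_2(2qrs)=4$ there, Kuroda's formula gives $h_2(K)=2q(K)$, so the two tests are equivalent. Your version is the one the paper itself uses in Theorems \ref{classqrs} and \ref{classpd}.

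Three small things to tidy:
\begin{enumerate}
\item $d=2$ and $d=2q_1$ pass all your restrictions, since $h(q)$ and $h(qq_1)$ are odd. They must be discarded because $h(K)$ is odd (Lemma \ref{realBiquad}). Also, $d$ divisible by $q$ should first be normalized by replacing $d$ with $\mathrm{sqf}(2qd)$.
\item For $d=2p$, note that $K/\QQ(\sqrt{2p})$ is ramified at $q$. This is what forces $h_2(2p)=2$, rather than leaving open $h_2(2p)=4$ with $q(K)=1$.
\item For $rs\equiv 3\pmod 4$, the divisibility $h_2(2qrs)\mid h_2(K)$ together with $h_2(2qrs)\ge 4$ already suffices. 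No claim about $q(K)$ is needed.
\end{enumerate}
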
 
\begin{proof}
Notice that if $h_2(K)=2$, then by Kuroda's class number formula     and Lemma \ref{realQuadodd}, we deduce that $h_2(2qd)\leq 8$; and so  by genus theory we conclude that $d$ has at most  {three} distinct prime divisors. So we  have the following cases to investigate:
\begin{enumerate}[$1)$]
	\item Let $d=r$ be a prime number that is co-prime with $2q$. Assume that $h_2(K)=2$. If $r\equiv3\pmod4$, the quadratic extension  $K/\QQ(2qr)$ is  ramified, and so    $h_2(2qr)=1$ or $2$. As there is no real quadratic field of this form that has an odd class number (cf. Lemma \ref{realQuadodd}), we have $h_2(2qr)=2$,  which is equivalent to $q$ or $r \equiv3\pmod8$ (cf. Lemma \ref{realQuadeven1New}). Conversely, if $r\equiv3\pmod 4$ and $q$ or $r \equiv3\pmod8$, then by using Wada's algorithm (cf. Page \pageref{wadaalgo}),   we check that $E_K=\langle{-1,\vep_r, \sqrt{\vep_{r}\vep_{2q}}, \sqrt{\vep_{2q}\vep_{2qr}}\rangle}$ (cf. Table \ref{tab7} and Lemma \ref{unitsrs}), i.e., $q(K)=4$. It follows, by  Kuroda's class number formula,   that $h_2(K)=2$.
	
	Now if $r\equiv1\pmod4$, then $K/\QQ(2qr)$ is an unramified quadratic extension. Therefore $h_2(K)=2$ if and only if  $h_2(2qr)=4$. This justifies the form $A)$. 
	\item  Let $d=rs$ for some distinct prime numbers $r$ and $s$ coprime with $2q$. 
	Put $k=\QQ(\sqrt{2qrs})$ and assume that  $h_2(K)=2$. Notice  that if $K/k$ is ramified, then $h_2(k)\in \{1,2\}$, which is impossible  according to  Lemmas \ref{realQuadodd} and \ref{realQuadeven1New}. Therefore,
	$K/k$ is unramified, and so  necessarily $rs\equiv1\pmod4$. Yields that we have two cases to investigate:
	
	\noindent\ding{229}  Assume that  $r\equiv s\equiv 3\pmod4$. According to \cite[Theorem 1]{BS95} we have $k^{(1)}=k^{(2)}$, and so 
	$	h_2(K)=\frac{h_2(k)}{2}$. Therefore,    
	$h_2(K)=2$ if and only if $h_2(k)=4$.
	On the other hand, the quadratic fields $k=\QQ(\sqrt{2qrs})$ such that $h_2(k)=4$ can be deduced from  \cite[Form $c_i$, for $i\in \{1,...,3\}$,  page 11]{BS25I}. In fact, we have $h_2(k)=4$ if and only if, after a suitable permutation of  $q, r$ and $s$, we have $r\equiv3\pmod8$,  $q\equiv7\pmod8$,   $\left(\dfrac{r}{s}\right)=1$ and one of the   conditions in Table \ref{table5}.
	
	{\footnotesize	\begin{center}
			\renewcommand{\arraystretch}{1.6}
			\begin{table}[H]
				\begin{tabular}{c|c|cc}
					\toprule
					Cas
					
					& $s \pmod{8}$
					&  $\left(\dfrac{q}{r}\right)$&  $\left(\dfrac{q}{s}\right)$ \\
					\midrule
					$c_{1}$ & $7$ & $1$& $-1$ \\
					$c_{2}$ & $7$ & $1$& $1$ \\
					$c_{3}$ & $3$ & $-1$& $1$ \\
					\bottomrule
				\end{tabular}
				\caption{Conditions on $r$, $s$ and $q$ ensuring $h_2(2qrs)=4$ in  case  $r\equiv s\equiv3\pmod4$}\label{table5}
			\end{table}
	\end{center}}

	\noindent\ding{229} Now assume that  $r\equiv s \equiv 1\pmod4$. Under the hypothesis $h_2(K)=2$, the 2-class field tower of $k$ is abelian,  i.e.,  $k^{(1)}= k^{(2)}$. Hence,  using the classification in \cite[Form $c_i$, for $i\in \{19,...,22,25,26\}$, page 176]{BS95}, by interpreting the graphs corresponding to the abelian case to those of type 6, we have :

	{	\begin{center}
			\footnotesize
			\begin{table}[H]
				\renewcommand{\arraystretch}{1.4}
				\begin{tabular}{c|ccc|ccc}
					\toprule
					Cas
					& $r \pmod{8}$
					& $s \pmod{8}$
					& $q \pmod{8}$ & $\left(\dfrac{r}{s}\right)$
					& $\left(\dfrac{r}{q}\right)$ &  $\left(\dfrac{s}{q}\right)$ \\
					\midrule
					$c_{19}$ & $5$ & $1$ & $7$ & $-1$ & $1$ & $1$ \\
					$c_{20}$ & $5$ & $1$ & $3$ & $-1$ & $1$&$-1$  \\
					$c_{21}$ & $5$ & $5$ & $7$ & $-1$ & $1$ &$-1$ \\
					$c_{22}$ & $5$ & $5$ & $3$ & $-1$ & $1$  &$-1$\\
					$c_{25}$ & $1$ & $5$ & $7$ & $-1$ & $-1$  &$-1$\\
					$c_{26}$ & $1$ & $5$ & $3$ & $-1$ & $-1$  &$-1$ \\
					\bottomrule
				\end{tabular}
				\caption{Conditions on $r$, $s$ and $q$ ensuring $h_2(2qrs)=4$ in abelian case and $r\equiv s\equiv1\pmod4$}\label{abelien2qrs}
			\end{table}
	\end{center}}

	Conversely, every row of Table \ref{abelien2qrs} guarantees that \(h_2(k)=4\). 
	Hence, to obtain \(h_2(K)=2\), it remains to verify that \(k^{(1)}=k^{(2)}\). 
	Therefore, by \cite[Theorem 2]{BLS98}, apart from the case \(c_{19}\), 
	the 2-class tower of \(k\) is abelian in all the remaining cases.

	This justifies the form $B)$.
	\item Let $d=rst$, where $r$, $s$ and $t$ are distinct prime numbers co-prime with $2q$. By genus theory, we obtain $h_2(rst)\geq 2$ and $h_2(2qrst)\geq 8$. It follows that  $h_2(K)\geq 4$, which forces the rejection of this case.
	
	\item Let $d=2d'$, for an odd positive integer $d'$ coprime with $q$. Assume that $h_2(K)=2$. As $K/\QQ(\sqrt{qd'})$ and $K/\QQ(\sqrt{2d'})$ ramifies, we have 
	$h_2(\QQ(\sqrt{qd'})), h_2(\QQ(\sqrt{2d'})) \in \{1,2\}\}$. It follows,  by Lemmas \ref{realQuadodd} and \ref{realQuadeven1New}, that we have:
	\[
	d' = \begin{cases}
		1, \;\text{ or }  \\
		q_1,  \text{ with }  q_1\equiv3\pmod 4,  \text{ or } \\
		p,  \text{ with }  p\equiv5\pmod 8  \text{ or } [  p\equiv1\pmod 8 \text{ and } \left(\frac{2}{p}\right)_4\left(\frac{p}{2}\right)_4=\left(\frac{p}{q}\right)=-1 ],  \text{ or } \\
		q_1p,   \text{ with }  \left(\frac{p}{q_1}\right)=-1   \text{ or }   [ p\equiv5\pmod8  \text{ and }  \left(\frac{p}{q}\right)=-1].
	\end{cases}
	\]
	
	Where $p\equiv-q_1\equiv1\pmod4$ distinct prime numbers.
	For the 	converse, if  $d'=1$  then $h_2(K)=1$ (cf. Lemma \ref{realBiquad}) and so this case is rejected. 
	Moreover, if  $d'=q_1$ for a prime number $q_1\equiv3\pmod4$ distinct to $q$, then according to Lemma \ref{realQuadodd} and Kuroda's class number formula, we have $h_2(K)=q(K)/4$ and  making use of  Table \ref{tab7} and    Wada's algorithm (cf. Page \pageref{wadaalgo}),  we check that  $E_K=\langle{-1,\vep_{2q},\sqrt{\vep_{2q}\vep_{2q_1}}, \sqrt{\vep_{2q}\vep_{qq_1}}\rangle}$, this gives $h_2(K)=1$. So the second case is also rejected.
	
	Now let $d'$ takes the third form,  i.e., $d'=p$, such that  $p\equiv5\pmod 8$  \text{ or } $[  p\equiv1\pmod 8 \text{ and } \left(\frac{2}{p}\right)_4\left(\frac{p}{2}\right)_4=\left(\frac{p}{q}\right)=-1 ]$. By Kuroda's class number formula and Lemma \ref{realQuadeven1New}, we have : $h_2(K)=\frac{q(K)h_2(2p)h_2(pq)}{4}=q(K).$
	Moreover, by making	use of Table \ref{tab7} and Lemma \ref{unitsrs}, we check that if $p\equiv5\pmod8$ then  $E_K=\langle{-1,\vep_{2q},\vep_{2p},\sqrt{\vep_{2q}\vep_{pq}} \;\text{or}\; \sqrt{\vep_{pq}}\rangle}$ according to whether $\left(\frac{p}{q}\right)=1$ or not, thus $h_2(K)=2$. But if  $  p\equiv1\pmod 8 \text{ and } \left(\frac{2}{p}\right)_4\left(\frac{p}{2}\right)_4=\left(\frac{p}{q}\right)=-1$, we check that $E_K=\langle{-1,\vep_{2q},\vep_{2p},  \sqrt{\vep_{2q}\vep_{2p}} \rangle}$ (notice that accordint to \cite[Theorem 2]{kuvcera1995parity} we have $N(\vep_{2p})=1$). Therefore  $h_2(K)=2$.
	This justifies the form $C)$.

	Finally let $d'=q_1p$, with  $\left(\frac{p}{q_1}\right)=-1   \text{ or }   [ p\equiv5\pmod8  \text{ and }  \left(\frac{p}{q}\right)=-1]$. Then $h_2(K)=\frac{q(K)h_2(2pq_1)h_2(pqq_1)}{4}=q(K).$ We begin by the first case where $\left(\frac{p}{q_1}\right)=-1,$ then if $p\equiv1\pmod8$ we have $E_K=\langle{-1,\vep_{2q },\sqrt{\vep_{2q}\vep_{2pq_1}}, \vep_{pqq_1}\; \text{or}\; \sqrt{\vep_{2q}\vep_{pqq_1}} \rangle}$ according to whether $\left(\frac{p}{q}\right)=-1$ or not. But if $p\equiv5\pmod8$ we have $E_K=\langle{-1,\vep_{2q },\vep_{2pq_1},\sqrt{\vep_{2q}\vep_{pqq_1}}\; \text{or}\; \sqrt{\vep_{2pq_1}\vep_{pqq_1}} \rangle}$ according to whether $\left(\frac{p}{q}\right)=1$ or not  (cf. Tables \ref{tab7} and \ref{tab8}). Therefore, if $\left(\frac{p}{q_1}\right)=-1$, we have $h_2(K)=2$ if and only if $p\equiv5\pmod8$ or $[p\equiv1\pmod8 \;\text{and}\; \left(\frac{p}{q}\right)=-1]$. At the end if $\left(\frac{p}{q_1}\right)=1$, $p\equiv5\pmod8  \text{ and }  \left(\frac{p}{q}\right)=-1$, we check that $E_K=\langle{-1,\vep_{2q },\vep_{2pq_1}, \sqrt{\vep_{2pq_1}\vep_{pqq_1}} \rangle}$ (cf. Tables \ref{tab7} and \ref{tab8}), then $h_2(K)=2$.  This justifies the form $D)$.

	\item Now let $d=qd'$ for an odd positive integer $d'$ co-prime with $q$. Then \(K=\QQ(\sqrt{2q},\sqrt{qd'})=\QQ(\sqrt{2q},\sqrt{2d'})\), and this case has already been treated in the preceding case.

\end{enumerate}
Plugging  all of the above investigations together, we   complete the proof of our classification.
\end{proof}


\subsection{Real biquadratic fields  $K=\QQ(\sqrt{q_1q_2},\sqrt{d})$ with $2$-class numbers equal   $2$ }
The following theorem provides a characterization of the fields $K=\QQ(\sqrt{q_1q_2},\sqrt{d})$ whose  class number is 2 where $q_1\equiv q_2 \equiv3\pmod4$ prime numbers. 	

\begin{theorem}\label{classq1q2d}
Let $q_1\equiv q_2 \equiv3\pmod4$ be distinct prime numbers and   $d$ a positive square-free integer. Put $K=\QQ(\sqrt{q_1q_2},\sqrt{d})$.	Then $h_2(K)=2$ if and only if $d$ is in one of the following forms : 
\begin{enumerate}[$A)$]
\item $d=2,$ such that $q_1\equiv q_2\equiv7\pmod8$ and  $h_2(2q_1q_2)=4.$
\item $d=p,$ where $p\equiv1\pmod4$ is a prime number such that     $\left(\frac{p}{q_1}\right)=\left(\frac{p}{q_2}\right)=1 $ and $h_2(pq_1q_2)=4$.
\item $d=2p,$ where $p\equiv1\pmod4$ is a prime  number such that one of the following conditions holds:
\begin{enumerate}[$1)$]
	\item $p\equiv5\pmod8,q_1\equiv q_2\equiv3\pmod8$ and $\left(\frac{p}{q_1}\right)=-\left(\frac{p}{q_2}\right)=-1,$ or
	
	\item $p\equiv5\pmod8,q_1\equiv7\pmod8, q_2\equiv3\pmod8$ and $\left(\frac{p}{q_1}\right)=-\left(\frac{p}{q_2}\right)=\left(\frac{q_2}{q_1}\right)=-1.$
\end{enumerate}

\item $d=q_3,$ where $q_3\equiv3\pmod4$ is a prime number such that  one of the following conditions holds:
\begin{enumerate}[$1)$]
	\item $q_1\equiv q_3 \equiv 7\pmod{8}$, $q_2\equiv3\pmod4$ and  $\left(\dfrac{q_3}{q_1}\right)=-\left(\dfrac{q_3}{q_2}\right)=\left(\dfrac{q_1}{q_2}\right)=1$,
	\item  $q_1\equiv q_2 \equiv 3\pmod{8}$ and  $\left(\dfrac{q_3}{q_1}\right)=-\left(\dfrac{q_3}{q_2}\right)=\left(\dfrac{q_1}{q_2}\right)=1$,
	
	
	\item  $q_2\equiv7\pmod8$, $q_3 \equiv 3\pmod{8}$, $q_1\equiv3\pmod4$  and  $\left(\dfrac{q_3}{q_1}\right)=\left(\dfrac{q_3}{q_2}\right)=-\left(\dfrac{q_1}{q_2}\right)=-1$,
	
	
	\item $q_2\equiv3\pmod8$, $q_3 \equiv 7\pmod{8}$, $q_1\equiv3\pmod4$  and  $\left(\dfrac{q_3}{q_1}\right)=\left(\dfrac{q_3}{q_2}\right)=-\left(\dfrac{q_1}{q_2}\right)=-1$.
	
	
\end{enumerate}
\item $d=2q_3,$ where $q_3\equiv3\pmod4$ is a prime number such that one of the following conditions holds:
\begin{enumerate}[$1)$]
	\item $q_3\equiv q_2\equiv7\pmod{8}$, $q_1\equiv3\pmod{8}$, and
	$\left(\dfrac{q_3}{q_1}\right)=-\left(\dfrac{q_3}{q_2}\right)=\left(\dfrac{q_1}{q_2}\right)=1$.

	\item $q_1\equiv q_2\equiv3\pmod{8}$, $q_3\equiv7\pmod{8}$, and
	$\left(\dfrac{q_3}{q_1}\right)=-\left(\dfrac{q_3}{q_2}\right)=-\left(\dfrac{q_1}{q_2}\right)=-1$.
\end{enumerate}
\item $d=p_1p_2,$ where $p_1\equiv p_2\equiv1\pmod4$ are distinct prime numbers such that one of the following conditions holds:
\begin{enumerate}[$1)$]
	\item $\left(\frac{p_1}{p_2}\right)=\left(\frac{p_1}{q_1}\right)=\left(\frac{p_1}{q_2}\right)=\left(\frac{p_1}{q_1}\right)=-\left(\frac{p_1}{q_2}\right)=-1,$
	\item $\left(\frac{p_1}{p_2}\right)=-\left(\frac{p_1}{q_1}\right)=\left(\frac{p_1}{q_2}\right)=\left(\frac{p_1}{q_1}\right)=-\left(\frac{p_1}{q_2}\right)=\left(\frac{q_1}{q_2}\right)=-1.$
\end{enumerate}
\item $d=q_3q_4,$ where $q_3\equiv q_4\equiv3\pmod4$ are distinct prime numbers such that one of the following conditions holds:
\begin{enumerate}[$1)$]
	\item $\left(\frac{q_1}{q_2}\right)=\left(\frac{q_1}{q_4}\right)=\left(\frac{q_4}{q_3}\right)=-\left(\frac{q_1}{q_3}\right)=-\left(\frac{q_2}{q_4}\right)=1,$ or
	\item $\left(\frac{q_1}{q_2}\right)=\left(\frac{q_1}{q_4}\right)=\left(\frac{q_4}{q_3}\right)=\left(\frac{q_1}{q_3}\right)=-\left(\frac{q_2}{q_3}\right)=-\left(\frac{q_2}{q_4}\right)=1.$
\end{enumerate}
\item $d=2q_1q_3,$ where $q_3\equiv3\pmod8$ is a prime number coprime with $q_1q_2$.
\item $d=pq_1,$ where $p\equiv1\pmod4$ is a prime number such that $p\equiv5\pmod8$ or $\left(\frac{p}{q_1}\right)=\left(\frac{p}{q_2}\right)=-1$.
\item $d=2pq_1,$  where $p\equiv1\pmod4$ is a prime number such that one of the following conditions holds:
\begin{enumerate}[$1)$]
	\item $\left(\frac{p}{q_1}\right)=\left(\frac{p}{q_2}\right)=-1,$ 
	\item $\left(\frac{p}{q_1}\right)=-\left(\frac{p}{q_2}\right)=1$ and $p\equiv5\pmod8.$
\end{enumerate}
\end{enumerate}
\end{theorem}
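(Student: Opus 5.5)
We follow the template of the proofs of Theorems \ref{classqrs} and \ref{class2qrs}. The starting point is Kuroda's formula \eqref{equa1} with $d_1=q_1q_2$. Since $h_2(q_1q_2)=1$ by Lemma \ref{realQuadodd}, we have
\[
h_2(K)=\tfrac14\, q(K)\,h_2(d)\,h_2(d_3), \qquad d_3=\mathrm{sqf}(q_1q_2d),
\]
and $q(K)\le 8$, so $h_2(K)=2$ forces $h_2(d)h_2(d_3)\ge 1$ with product $8/q(K)$. We first bound the number of prime divisors of $d$. We may assume $\gcd(d,q_1q_2)\in\{1,q_1\}$ after replacing $d$ by $\mathrm{sqf}(dq_1q_2)$ and permuting $q_1,q_2$ if necessary. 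Genus theory applied to $\QQ(\sqrt d)$ and $\QQ(\sqrt{d_3})$ then shows that, if $h_2(K)=2$, the total number of prime factors involved is small. This leaves exactly the candidates $d\in\{2,p,2p,q_3,2q_3,p_1p_2,q_3q_4,2q_1q_3,pq_1,2pq_1\}$ together with some trivially excluded cases: those giving odd class number by Lemma \ref{realBiquad}, such as $d=2$ with some $q_i\equiv 3\pmod 8$, $d=p$ with some $\left(\frac{p}{q_i}\right)=-1$, and $d=q_3$ (which reduces to $\QQ(\sqrt{q_1q_2},\sqrt{q_2q_3})$), among others.

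We then split into two cases according to whether the relevant quadratic extension is unramified.

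\emph{Unramified case.} This arises when $K/\QQ(\sqrt{d_3})$ is unramified, e.g. $d=2$ with $q_1\equiv q_2\equiv 7\pmod 8$, $d=p$ with both symbols equal to $1$, $d=q_3$, $d=2q_3$, and $d=q_3q_4$. Here we use Remark \ref{abelianquaexten}: $h_2(K)=2$ if and only if $h_2(d_3)=4$ and the $2$-class field tower of $\QQ(\sqrt{d_3})$ is abelian. For forms A) and B) we expect the $2$-class group to be cyclic, as in form A) of Theorem \ref{classqrs}, so cyclic of order $4$ suffices and the tower is automatically abelian. For $d_3=q_1q_2q_3$ and $d_3=2q_1q_2q_3$ all primes are $\equiv 3\pmod 4$, so \cite[Theorem 1]{BS95} gives $k^{(1)}=k^{(2)}$. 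We then read off $h_2=4$ from the lists of \cite{BS25I}, exactly as in Tables \ref{refddd} and \ref{table5}, which yields D) and E). For $d=q_3q_4$, with $d_3=q_1q_2q_3q_4$, we need the four-prime analogue from \cite{BS25I} or \cite{BS25}, checking abelianity via \cite{BLS98}; this yields G).

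\emph{Ramified case.} This covers $2p$, $p_1p_2$, $2q_1q_3$, $pq_1$, $2pq_1$. Here $h_2(d),h_2(d_3)\in\{1,2,4\}$ are determined by Lemmas \ref{realQuadodd} and \ref{realQuadeven1New}, which restricts the Legendre symbol conditions. We then compute $q(K)$ by Wada's algorithm, using the square-root criteria of Tables \ref{tab7} and \ref{tab8}: we decide which of $\varepsilon_{q_1q_2}\varepsilon_d$, $\varepsilon_{q_1q_2}\varepsilon_{d_3}$, $\varepsilon_d\varepsilon_{d_3}$, $\varepsilon_{q_1q_2}\varepsilon_d\varepsilon_{d_3}$ (and the individual units) are squares in $K$. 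Imposing $q(K)h_2(d)h_2(d_3)=8$ then gives the conditions in C), F), H), I), J). For example, for $d=2q_1q_3$ we have $d_3=2q_2q_3$, and $h_2(2q_1q_3)=h_2(2q_2q_3)=2$ requires $q_3\equiv 3\pmod 8$ or $q_1\equiv q_2\equiv 3\pmod 8$, after which $q(K)=2$.

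\textbf{Main obstacle.} The hardest step is the unit-index computation in the ramified cases with three or four primes, namely $p_1p_2$ and $2pq_1$. There the square-root behaviour of $\varepsilon_{pq_1q_2}$-type units depends on quartic information. We would first try to settle it using the explicit forms $\sqrt{\varepsilon_d}=[\sqrt{m_1},\sqrt{m_2}]$ from the appendix tables, by determining $m_1$ from Legendre symbol conditions. A second serious difficulty is the four-prime unramified case G), where the available classification of abelian $2$-towers must be matched carefully against our symbol conventions.
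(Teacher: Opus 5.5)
\textbf{Gap 1: the case split is in the wrong place, and C), F) need an input your plan lacks.}
Your skeleton is the paper's: Kuroda's formula, then Remark \ref{abelianquaexten} with the Benjamin--Lemmermeyer--Snyder lists when $K$ is unramified over a quadratic subfield, and Wada's algorithm otherwise. But you draw the split wrongly. Whenever $\gcd(d,q_1q_2)=1$, the extension $K/\QQ(\sqrt{q_1q_2d})$ is unramified. So $d=2p$ and $d=p_1p_2$ belong to the unramified case, not the ramified one.

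In these cases $K/\QQ(\sqrt d)$ is ramified, which forces $h_2(d)=2$. Kuroda's formula then requires $h_2(d_3)=4$ and $q(K)=1$, where $d_3=2pq_1q_2$, resp.\ $p_1p_2q_1q_2$.
\begin{itemize}
\item Genus theory only gives $h_2(d_3)\ge 4$. Lemmas \ref{realQuadodd} and \ref{realQuadeven1New} say nothing about when equality holds.
\item Given $h_2(d_3)=4$, Remark \ref{abelianquaexten} shows that $q(K)=1$ is equivalent to $\QQ(\sqrt{d_3})$ having abelian $2$-class field tower.
\end{itemize}
This is exactly what the paper imports from \cite[p.~175, $c_{12}$--$c_{14}$]{BS95}. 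Tables \ref{tab7} and \ref{tab8} cannot replace it: they contain no $\sqrt{\varepsilon_{p_1p_2q_1q_2}}$, and you would still need a criterion for $h_2(d_3)=4$. So your plan does not reach C) and F).

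Two smaller points. For D) and E) you can simply apply Theorem \ref{classqrs} B)3)--6) and Theorem \ref{class2qrs} B)1)--2) to $\QQ(\sqrt{q_3},\sqrt{q_1q_2})$ and $\QQ(\sqrt{2q_3},\sqrt{q_1q_2})$, as the paper does. Also, $d=q_3$ is not an excluded case; the odd-class-number field $\QQ(\sqrt{q_1q_2},\sqrt{q_2q_3})$ comes from $d=q_2q_3$.

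\textbf{Gap 2: the case $\gcd(d,q_1q_2)=q_1$ does not yield the stated list.}
Done correctly, this case produces a different list, so the statement as written cannot be proved by your route or any other. Write $d=q_1d'$. Then both $K/\QQ(\sqrt{q_1d'})$ and $K/\QQ(\sqrt{q_2d'})$ are ramified, and Lemma \ref{realQuadeven1New}(3) also allows $d'=pq_3$. Your candidate list omits this, as does the paper's list $d'\in\{1,2,q,2q_3,p,2p\}$.

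\emph{A missing family.} Take $q_1=7$, $q_2=11$, $d=105$. Then
\[
\sqrt{\varepsilon_{77}}=\tfrac12(\sqrt{11}+\sqrt7),\qquad \sqrt{\varepsilon_{165}}=\tfrac12(\sqrt{15}+\sqrt{11}),\qquad \sqrt{\varepsilon_{105}}=\sqrt{21}+2\sqrt5.
\]
Only $\varepsilon_{77}\varepsilon_{165}$ becomes a square in $K$, so $q(K)=2$. Since $h_2(105)=h_2(165)=2$ by Lemma \ref{realQuadeven1New}, we get $h_2(K)=2$. Yet $105$ is of none of the forms A)--J).

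\emph{A false positive in I).} Take $p=5$, $q_1=11$, $q_2=19$, $d=55$, which is of form I). Then
\[
\sqrt{\varepsilon_{55}}=3\sqrt5+2\sqrt{11},\qquad \sqrt{\varepsilon_{95}}=2\sqrt5+\sqrt{19},\qquad \sqrt{\varepsilon_{209}}=46\sqrt{11}+35\sqrt{19}.
\]
Both $\varepsilon_{55}\varepsilon_{95}$ and $\varepsilon_{55}\varepsilon_{209}$ are squares in $K$, so $q(K)=4$ and $h_2(K)=4$.

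\emph{Form H).} The square classes in Table \ref{tab7} show that $d=2q_1q_3$ gives $h_2(K)=2$ exactly when at least two of $q_1,q_2,q_3$ are $\equiv 3\pmod 8$.
\begin{itemize}
\item $(q_1,q_2,q_3)=(3,11,7)$ gives $h_2(\QQ(\sqrt{33},\sqrt{42}))=2$, outside the stated H).
\item $(7,23,3)$ lies in H), but
\[
\sqrt{\varepsilon_{161}}=16\sqrt{23}+29\sqrt7,\qquad \sqrt{\varepsilon_{42}}=\sqrt6+\sqrt7,\qquad \sqrt{\varepsilon_{138}}=\sqrt{23}+2\sqrt6
\]
give $q(K)=4$ and $h_2(K)=4$. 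So your claim ``after which $q(K)=2$'' also fails.
\end{itemize}

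The paper's own proof has the same defects: for H) and I) it asserts only one extra square root in $E_K$. Once Gap 1 is repaired with the \cite{BS95} input, your method can deliver a corrected classification, not the one stated.
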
 

\begin{proof}
Let $d$ be a positive square-free integer. We have two main cases to investigate.

\noindent\ding{229} Assume that $\mathrm{gcd}(d,q_1q_2)=1$.  Put $k=\QQ(\sqrt{q_1q_2d})$.  We note that $K/k$ is an unramified quadratic extension. So by Remark \ref{abelianquaexten}, we have 
$h_2(K)=2$ if and only if $h_2(k)=4$ and $k^{(1)}=k^{(2)}$. In particular, 
if $h_2(K)=2$, then the $2$-class group of $k$  is cyclic or it is the Klein $4$-group; and so by genus theory, we have one of the following forms for $d$:   
\begin{center}
[$d=2$ or $d=p$]   or 	[$d= q_3 $, $d= 2q_3 $, $d= 2p$, $d= p_1p_2 $ or $d= q_3q_4$], 
\end{center}
where $p\equiv p_i\equiv -q_i\equiv1\pmod4$ are distinct prime numbers. Now let us explore each form.
\begin{enumerate}[\noindent$i)$]
\item Let $d=2$ (resp.  $d=p$). Since $A(k)$ is cyclic, 
we deduce from class field theory and Lemma \ref{realQuadeven1New} that $h_2(K)=2$ if and only if
$q_1\equiv q_2\equiv 7\pmod 8$ and  $h(2q_1q_2)=4$ (resp.     $\left(\frac{p}{q_1}\right)=\left(\frac{p}{q_2}\right)=1 $ and $h_2(pq_1q_2)=4$). This justifies the forms $A)$ and $B)$ respectively.

\item Let $d=2p$. Since $K/\QQ(\sqrt{d})$ ramify, this implies that $h_2(2p) = 2$, and thus $p\equiv5\pmod8$ (cf. Lemmas \ref{realQuadeven1New} and \ref{realQuadodd}). According to \cite[page 175, cases $c_{12}-c_{14}$]{BS95}, we have    $k^{(1)}=k^{(2)}$  and $h_2(k)=4$  if  and only if one of the following conditions holds:
\begin{enumerate}[$\mathrm{C}1)$]
	\item $p\equiv5\pmod8,q_1\equiv q_2\equiv3\pmod8$ and $\left(\frac{p}{q_1}\right)=-\left(\frac{p}{q_2}\right)=-1,$ or
	
	\item $p\equiv5\pmod8,q_1\equiv7\pmod8, q_2\equiv3\pmod8$ and $\left(\frac{p}{q_1}\right)=-\left(\frac{p}{q_2}\right)=\left(\frac{q_2}{q_1}\right)=-1.$
\end{enumerate}
This justifies the form $C)$.
\item Let $d=q_3$. By Theorem \ref{classqrs}, $h_2(K)=2$ if and only if we have one of the conditions listed in $D)$.  
\item Let $d=2q_3.$  By Lemma \ref{realQuadeven1New},  $h_2(K)=2$ if and only if we have one of the conditions listed in $E)$.  
\item Let $d=p_1p_2.$   According to \cite[page 175, cases $c_{12}-c_{14}$]{BS95} we have $h_2(p_1p_2q_1q_2)=4$ and $k^{(1)}=k^{(2)}$  if and only if we have one of the following conditions:
\begin{enumerate}[$\mathrm{C}1)$]
	\item $\left(\frac{p_1}{p_2}\right)=\left(\frac{p_1}{q_1}\right)=\left(\frac{p_1}{q_2}\right)=\left(\frac{p_1}{q_1}\right)=-\left(\frac{p_1}{q_2}\right)=-1,$
	\item $\left(\frac{p_1}{p_2}\right)=-\left(\frac{p_1}{q_1}\right)=\left(\frac{p_1}{q_2}\right)=\left(\frac{p_1}{q_1}\right)=-\left(\frac{p_1}{q_2}\right)=\left(\frac{q_1}{q_2}\right)=-1.$
\end{enumerate}
This justifies the form $F)$.
\item Let $d=q_3q_4.$ We note that according \cite[Theorem 1]{BS95} the $2$-class field of $k$ is abelian. Moreover,  using \cite[page 11, $c_1-c_3$]{BS25I} we deduce that $h_2(q_1q_2q_3q_4)=4$ if and only if one of the following conditions holds:  
\begin{enumerate}[$\mathrm{C}1)$]
	\item $\left(\frac{q_1}{q_2}\right)=\left(\frac{q_1}{q_4}\right)=\left(\frac{q_4}{q_3}\right)=-\left(\frac{q_1}{q_3}\right)=-\left(\frac{q_2}{q_4}\right)=1,$ or
	\item $\left(\frac{q_1}{q_2}\right)=\left(\frac{q_1}{q_4}\right)=\left(\frac{q_4}{q_3}\right)=\left(\frac{q_1}{q_3}\right)=-\left(\frac{q_2}{q_3}\right)=-\left(\frac{q_2}{q_4}\right)=1.$
\end{enumerate}
This justifies the form $G)$.
\end{enumerate}

\noindent\ding{229} Now assume that $\mathrm{gcd}(d,q_1q_2)=q_1$.    Put $k=\QQ(\sqrt{q_2d'})$, where $d'=\frac {d}{q_1}$. As $K/k$ ramify, the equality $h_2(K)=2$ implies that $h_2(k)\in\{1,2\}$; and thus  Lemmas  \ref{realQuadodd} and \ref{realQuadeven1New} imply that: 
\[
d'\in \{1,2,q,2q_3,p,2p\},
\]
where $q\equiv q_3\equiv -p\equiv3\pmod4$ are  prime numbers satisfying some extra conditions as below. For the converse,
let us explore these forms of $d'$. Firstly, notice that the forms $d'=1$, $2$ and $q$ are eliminated. In fact,  each of these cases gives $h_2(K)=1$ (cf. Lemma \ref{realBiquad}). Now let us investigate each of the remaining forms.
\begin{enumerate}[$i)$]
\item $d'=2q_3,$ for some prime number $q_3\equiv3\pmod8$ distinct from $q_1$ and $q_2$. Using Table \ref{tab7} and  Wada's algorithm (cf. Page \pageref{wadaalgo})  we conclude that  $E_K= \langle{-1,\vep_{q_1q_2},\vep_{2q_1q_3},\eta\rangle}$, where $$\eta \in \{\sqrt{\vep_{2q_1q_3}\vep_{2q_2q_3}},\sqrt{\vep_{q_1q_2}\vep_{2q_2q_3}},\sqrt{\vep_{q_1q_2}\vep_{2q_2q_3}\vep_{2q_1q_3}}\}.$$  Therefore $h_2(K)=2$. This justifies the form $H)$. 
\item Let $d'=p,$ for some prime number $p\equiv1\pmod4$ such that $p\equiv5\pmod8$ or $\left(\frac{p}{q_1}\right)=\left(\frac{p}{q_2}\right)=-1$. From Table \ref{tab7} we get $E_K= \langle{-1,\vep_{q_1q_2},\vep_{pq_1}, \sqrt{\vep_{pq_1}\vep_{pq_2}}\;or\;\sqrt{\vep_{pq_1}\vep_{pq_2}\vep_{q_1q_2}}\rangle}$. So by Kuroda's class number formula and Lemma  \ref{realQuadeven1New}, we have $h_2(K)=2$. This justifies the form $I)$. 
\item Now let $d'=2p,$ for some prime number $p\equiv1\pmod4$ such that $p\equiv5\pmod8$ or $\left(\frac{p}{q_1}\right)=\left(\frac{p}{q_2}\right)=-1$. Notice that $q(K)=4$ or $2$ according to whether  $p\equiv5\pmod8$ and $\left(\frac{p}{q_1}\right)=\left(\frac{p}{q_2}\right)=1$ or not. In fact using Table \ref{tab7} and  Wada's algorithm (cf. Page \pageref{wadaalgo}) we check that:

\begin{center}
	\begin{table}[H]
		\begin{tabular}{c|cc|c}
			\toprule
			$p\pmod8$ & $\left(\dfrac{p}{q_1}\right)$ & $ \left(\dfrac{p}{q_2}\right)$ & $E_K$ \\
			\toprule
			$5$	&  $1$&$1$ &  $\langle{-1,\vep_{q_1q_2},\sqrt{\vep_{2pq_1}\vep_{2pq_2}}, \sqrt{\vep_{q_1q_2}\vep_{2pq_1}}\rangle}$ \\
			\midrule
			$5$	&  $-1$&$-1$ &  $\langle{-1,\vep_{q_1q_2},\vep_{2pq_1},\sqrt{\vep_{2pq_1}\vep_{2pq_2}}\rangle}$ \\
			
			\midrule
			$5$	&  $1$&$-1$ &  $\langle{-1,\vep_{q_1q_2},\vep_{2pq_2},\sqrt{\vep_{q_1q_2}\vep_{2pq_1}}\rangle}$  \\
			
			\midrule
			$1$	&  $-1$&$-1$ &  $\langle{-1,\vep_{q_1q_2},\vep_{2pq_1},\sqrt{\vep_{2pq_1}\vep_{2pq_2}}\rangle}$ \\

			\bottomrule
		\end{tabular}
		\caption{Unit group of $K$ when $d=2pq_1$}
	\end{table}
\end{center}
Therefore by applying Kuroda's class number formula and Lemma  \ref{realQuadeven1New}, we deduce that $h_2(K)=2$ if and only if \begin{enumerate}[$\mathrm{C}1)$]
	\item $\left(\frac{p}{q_1}\right)=\left(\frac{p}{q_2}\right)=-1,$  or
	\item $\left(\frac{p}{q_1}\right)=-\left(\frac{p}{q_2}\right)=1$ and $p\equiv5\pmod8.$
\end{enumerate}
This justifies the form $J)$.  		This completes the proof of our theorem.
\end{enumerate}
\end{proof}


\subsection{Real biquadratic fields  $K=\QQ(\sqrt{2},\sqrt{d})$ with $2$-class numbers equal   $2$  } The following theorem provides a complete classification of the real biquadratic fields $K=\QQ(\sqrt{2},\sqrt{d})$ whose $2$-class number is equal to $2$.	
\begin{theorem}\label{class2d}
Let $K=\QQ(\sqrt{2},\sqrt{d})$, where  $d$ is an odd  positive square-free integer. 
Then $h_2(K)=2$ if and only if, $d$ is in  one of the following forms: 
\begin{enumerate}[$A)$]
\item $d=pq,$ where $p\equiv-q\equiv1\pmod4$ are prime numbers such that  $p\equiv5\pmod8$ or  $\left(\frac{p}{q}\right)=-1.$
\item $d=p,$ where $p$ is a prime number such that $p\equiv1\pmod8$, $\left(\frac{2}{p}\right)_4=(-1)^{\frac{p-1}{8}}$ and $h_2(2p)=4.$
\item $d=q_1q_2,$ where $q_1\equiv q_2\equiv7\pmod8$ are distinct prime numbers such that  $h_2(2q_1q_2)=4.$
\item $d=p_1p_2$, where $p_1\equiv p_2\equiv1\pmod4$ are distinct prime numbers satisfying one of the following conditions:

\begin{enumerate}[$1)$]
	\item $\left(\frac{p_1}{p_2}\right)=-1$, $h(2p_1p_2)=4$ and \textbf{none} of the two units $\{\vep_{2}\vep_{p_1p_2}\vep_{2p_1p_2}, \vep_{2p_1p_2}\}$ is a square in $K$,
	\item $\left(\frac{p_1}{p_2}\right)=1$ and $\left(\frac{p_1}{p_2}\right)_4\not=\left(\frac{p_2}{p_1}\right)_4$,
	$h(2p_1p_2)=4$ and \textbf{none} of the two units $\{ \vep_{p_1p_2}\vep_{2p_1p_2}, \vep_{2p_1p_2}\}$ is a square in $K$.
\end{enumerate}

\item $d=pq_1q_2,$ where $p\equiv-q_1\equiv-q_2\equiv1\pmod4$ are distinct prime numbers such that, after a suitable permutation of $q_1$ and $q_2$, we have one of the following conditions:
\begin{enumerate}[$1)$]
	\item $p\equiv5\pmod8$, $q_1\equiv q_2\equiv3\pmod8$ and $\left(\frac{p}{q_2}\right)=-\left(\frac{p}{q_1}\right)=1,$
	\item $p\equiv5\pmod8$, $q_1\equiv 7\pmod8$, $q_2\equiv3\pmod8$ and $\left(\frac{p}{q_2}\right)=-\left(\frac{p}{q_1}\right)=-\left(\frac{q_2}{q_1}\right)=1.$
\end{enumerate}
\end{enumerate}
\end{theorem}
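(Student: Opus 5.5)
We follow the strategy of the previous subsections. Kuroda's formula \eqref{equa1} with $h_2(2)=1$ gives $h_2(K)=\frac14 q(K)h_2(d)h_2(2d)$ with $q(K)\in\{1,2,4\}$. So $h_2(K)=2$ forces $h_2(d)h_2(2d)\le 8$.

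\textbf{Step 1: bounding the number of prime divisors.} We use genus theory. If $d$ has $t$ odd prime divisors, then $2^{t-1}\mid h_2(2d)$, and $h_2(d)\geq 2^{t-2}$. Hence $t\le 3$, and the possible shapes are
\[
d=p,\ q,\ pq,\ q_1q_2,\ p_1p_2,\ pq_1q_2,\ p_1p_2p_3,\ \text{etc.}
\]
A finer count discards $t=3$ except for $d=pq_1q_2$. Indeed, there $h_2(d)=2$ is possible by Lemma \ref{realQuadeven1New}, and $K/\QQ(\sqrt{2d})$ is ramified at $2$ only, so $h_2(2d)\in\{2,4\}$. The shapes $d=q$ and $d=p$ with $p\equiv5\pmod8$ are settled at once by Lemma \ref{realBiquad}, which gives odd class number.

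\textbf{Step 2: the unramified cases.} Suppose $d\equiv1\pmod4$, so that $K/\QQ(\sqrt{2d})$ is unramified. This covers $d=p$, $q_1q_2$, $p_1p_2$ and $pq_1q_2$. By Remark \ref{abelianquaexten}, $h_2(K)=2$ holds if and only if $h_2(2d)=4$ and $\QQ(\sqrt{2d})^{(1)}=\QQ(\sqrt{2d})^{(2)}$.
\begin{itemize}
\item For $d=p$ the $2$-class group of $\QQ(\sqrt{2p})$ is cyclic, so the condition is just $h_2(2p)=4$. The quartic symbol condition in $B)$ comes from demanding $h_2(2p)\ge4$ via Lemma \ref{realQuadeven1New}, corrected by Kuroda's formula. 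Here we compute $q(K)$ using $N(\vep_{2p})$ (Kučera) to see which combination makes it consistent.
\item For $d=q_1q_2$, cyclicity again holds by genus theory, and ramification of $K/\QQ(\sqrt{q_1q_2})$ forces $q_1\equiv q_2\equiv 7\pmod 8$ (otherwise $h_2(K)=1$, by Lemma \ref{realBiquad}). This gives $C)$.
\item For $d=pq_1q_2$ the group is of rank $2$. We invoke \cite{BS95} or \cite{BS25I} (the $2pq_1q_2$ forms with abelian tower) to read off the conditions in $E)$, exactly as in Theorem \ref{classq1q2d}, cases $C)$.
\item For $d=p_1p_2$ the class group of $\QQ(\sqrt{2p_1p_2})$ is cyclic of order $\ge4$ when $h_2(p_1p_2)=2$. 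Here we instead apply Kuroda directly: $h_2(K)=\frac14 q(K)\cdot 2\cdot h_2(2p_1p_2)$. So we need $h_2(2p_1p_2)=4$ and $q(K)=1$. Via Wada's algorithm, $q(K)=1$ amounts to none of the candidate products being squares in $K$. The products $\vep_2\vep_{p_1p_2}$ etc. are excluded by norm considerations, which yields exactly the square-conditions stated in $D)$.
\end{itemize}

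\textbf{Step 3: the ramified case.} For $d=pq$ (with $d\equiv3\pmod4$), Lemma \ref{realQuadeven1New} gives $h_2(pq)=h_2(2pq)=2$ exactly under the conditions of $A)$. Tables \ref{tab7}, \ref{tab8} and Wada's algorithm then give $q(K)=2$, hence $h_2(K)=2$. Outside these conditions, one of $h_2(pq),h_2(2pq)$ is at least $4$ and $q(K)\ge2$, so $h_2(K)\ge4$. The cases $d=q_1q_2$ with some $q_i\equiv3\pmod8$, and $d=pq_1q_2$ outside $E)$, are rejected the same way.

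\textbf{Main obstacle.} The main obstacle is the unit index computations. The most delicate are the cases $d=p$ and $d=p_1p_2$, where deciding whether $\sqrt{\vep_{2p_1p_2}}$ or $\sqrt{\vep_{p_1p_2}\vep_{2p_1p_2}}$ lies in $K$ is not determined by congruences alone. This is why those conditions are left as explicit hypotheses. For these I would first try the explicit forms of $\sqrt{\vep_d}$ from the appendix, $[\sqrt{m_1},\sqrt{m_2}]$, and check which products have square roots in $K$.
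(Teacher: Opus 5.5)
Your route is essentially the paper's: Kuroda's formula, Remark~\ref{abelianquaexten} in the unramified cases, \cite{BS95} for $d=pq_1q_2$, and Wada's algorithm for unit indices. The step that fails is one you treat as routine. In Step~3 you assert that Tables~\ref{tab7} and~\ref{tab8} give $q(K)=2$ for every $d=pq$ as in $A)$. This is false when $p\equiv1\pmod8$ and $\left(\frac{p}{q}\right)=-1$.

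In that case, rows~17 and~19 of Table~\ref{tab7} (or a direct check of the systems $x\pm1=m_1y_1^2$, $x\mp1=m_2y_2^2$) give $\sqrt{\vep_{pq}}=[\sqrt2,\sqrt{2pq}]$ and $\sqrt{\vep_{2pq}}=[\sqrt2,\sqrt{pq}]$. So \emph{both} roots lie in $K$, $q(K)=4$, and $h_2(K)=\frac14\cdot4\cdot h_2(pq)\,h_2(2pq)=4$. A concrete instance is $d=51=17\cdot3$:
\begin{itemize}
\item $\vep_{51}=50+7\sqrt{51}$ has $\sqrt{\vep_{51}}=\frac12(7\sqrt2+\sqrt{102})\in K$;
\item $\vep_{102}=101+10\sqrt{102}$ has $\sqrt{\vep_{102}}=5\sqrt2+\sqrt{51}\in K$.
\end{itemize}
Since $h(51)=h(102)=2$, the field $\QQ(\sqrt2,\sqrt{51})$ has class number $4$.

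So the \emph{if} direction of $A)$ is false in this subcase, and no argument can prove the statement as written. The paper's proof makes the identical claim $E_K=\langle-1,\vep_2,\vep_{pq},\sqrt{\vep_{pq}\vep_{2pq}}\rangle$, which contradicts its own table. The correct form is $d=pq$ with $p\equiv5\pmod8$. For that case, rows 9, 13, 18 and 20 show that $\sqrt{\vep_{pq}\vep_{2pq}}\in K$ while neither root separately lies in $K$, so $q(K)=2$ and $h_2(K)=2$. This agrees with the rest of the paper: $\QQ(\sqrt2,\sqrt p,\sqrt q)/K$ is unramified quadratic, so $h_2(K)=2$ would force $\QQ(\sqrt2,\sqrt p,\sqrt q)$ to have odd class number. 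Theorem~\ref{triquawithodd}, item~3), allows that only for $p\equiv5\pmod8$.

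Several smaller points also need repair.
\begin{itemize}
\item The $2$-class group of $\QQ(\sqrt{2p_1p_2})$ is not cyclic. No prime $\equiv3\pmod4$ divides $2p_1p_2$, so genus theory gives $2$-rank $2$. Switching to Kuroda's formula is still right, but say explicitly that $h_2(2p_1p_2)\ge4$ and $h_2(p_1p_2)\ge2$ force $h_2(p_1p_2)=2$, $h_2(2p_1p_2)=4$ and $q(K)=1$. Lemma~\ref{realQuadeven1New} then yields exactly the cases $D)$-$1)$ and $D)$-$2)$.
\item The square conditions come from a sign check at the four real embeddings. In $D)$-$1)$, where $N(\vep_{p_1p_2})=-1$, the only totally positive candidate is $\vep_{2p_1p_2}$ or $\vep_2\vep_{p_1p_2}\vep_{2p_1p_2}$, according as $N(\vep_{2p_1p_2})=+1$ or $-1$.
\item In $D)$-$2)$, Scholz's theorem gives $N(\vep_{p_1p_2})=+1$; the paper's proof misstates this as $-1$. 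The candidates are then $\vep_{2p_1p_2}$ and $\vep_{p_1p_2}\vep_{2p_1p_2}$, because $\vep_{p_1p_2}$ itself is never a square in $K$ by Lemma~\ref{ZekTAo}.
\item In Step~1, $K/\QQ(\sqrt{2pq_1q_2})$ is unramified, not ramified at $2$; you use the correct fact in Step~2.
\item For $d=p$ no unit index is needed. The quartic condition in $B)$ already follows from $h_2(2p)=4$ via Lemmas~\ref{realQuadodd} and~\ref{realQuadeven1New}.
\item For $d=pq$ outside $A)$, both $h_2(pq)$ and $h_2(2pq)$ are $\ge4$ by Lemma~\ref{realQuadeven1New}. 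So $h_2(K)\ge4$ without your unproved claim $q(K)\ge2$.
\end{itemize}
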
 
\begin{proof}
Let $K=\QQ(\sqrt{2},\sqrt{d})$, with $d$   an odd positive square-free integer. 
Our proof consists of two parts, according to the equivalence class of $d  \pmod4$. 

\noindent\ding{229} Assume that $d\equiv3\pmod4$. So $K/\QQ(\sqrt{2d})$ ramify and thus  the conditions $h_2(K)=2$, implies that we have   $h_2(2d)=1$ or $2$. 
Conversely, notice that if $h_2(2d) =1$,   then $d=q\equiv3\pmod4$ is a prime number (cf. Lemma \ref{realQuadodd}); and  this implies that the class number of $K$ is odd (cf. Lemma \ref{realBiquad}). So this case eliminated.

Now let $h_2(2d)=2$. So Lemma \ref{realQuadeven1New}  implies that  $d=pq$,  where  $p\equiv-q\equiv1\pmod4$ are prime numbers such that $p\equiv5\pmod8$ or  $\left(\frac{p}{q}\right)=-1.$ 
Thus, we have $h(pq)=h(2pq)=2$ (cf. Lemma \ref{realQuadeven1New}) and  $q(K)=2$  (In fact $E_K=\langle{-1,\vep_{2},\vep_{pq}, \sqrt{\vep_{pq}\vep_{2pq}}\rangle}$   by Wada's algorithm and Table \ref{tab7}). Consequently, Kuroda's class number formula gives $h_2(K)=2$. This justifies the form $A)$.

\noindent\ding{229} Now assume that $d\equiv1\pmod4$. Thus, $K/\QQ(\sqrt{2d})$ is an unramified quadratic extension.   Notice that if  $h_2(K)=2$, then  $h_2(2d)= 4$    and so $h_2(d)q(K)=2$. This implies that $h_2(d)=1$ or $2$. So we have two sub-cases to investigate.
\begin{enumerate}[$\star$]
\item Assume that 	$h_2(d)=1$.  So, by Lemma \ref{realQuadodd}, we have $d=p$, where $p \equiv1\pmod4$ is a prime number, or $d=q_1q_2$, where $ q_1\equiv q_2\equiv3\pmod4$ are distinct prime numbers. Moreover, we have:
\begin{enumerate}[$a)$]
	\item If $d=p$,  then   \cite[Theorem 2]{kuvcera1995parity}  implies that, $p\equiv1\pmod 8$ and $\left(\frac{2}{p}\right)_4=(-1)^{\frac{p-1}{8}}$.  Therefore, by class field theory,
	$h_2(K)=2$ if and only if  $h_2(2p)=4$.  This justifies the form $B)$.
	\item Now if $d=q_1q_2$, then, by class field theory and  Lemma \ref{realQuadeven1New},  $h_2(K)=2$ if and only if $q_1\equiv q_2\equiv7\pmod8$ and $h_2(2q_1q_2)=4$. This justifies the form $C)$.
\end{enumerate}

\item Assume that $h_2(d)=2$. By Lemma  \ref{realQuadeven1New}, we have one of the following   cases:
	\begin{enumerate}[$a)$]
		\item 	If $d$ takes the   first form, i.e., $d=p_1p_2$ $\text{ with }  p_1\equiv p_2\equiv1\pmod 4  \text{ and }  \left(\frac{p_1}{p_2}\right)=-1$, then according to \cite[Theorem 1]{kuvcera1995parity} we have $N_{\QQ(\sqrt{p_1p_2})/\QQ}(\vep_{p_1p_2})=-1$ and   using  \cite[Theorem 2.2]{AzZektaous} we deduce  that  
		$q(K)=2$ or $1$, according to whether one of the units $\{ {\vep_{2}\vep_{p_1p_2}\vep_{2p_1p_2}} ,   {\vep_{2p_1p_2}}\}$ is a square in $K$ or not. 
		
		Since by Kuroda's class number formula, we have $h_2(K)=\frac12 q(K)h_2(2p_1p_2)$, we deduce that 
		$h_2(K)=2$ if and only if $q(K)=1$  and  $h_2(2p_1p_2)=4$. This justifies the form $D)-1)$.

		\item Now if  $d$ takes  the second form, i.e.,   $d=p_1p_2$ $\text{ with }  p_1\equiv p_2\equiv1\pmod 4  \text{ and }   [\left(\frac{p_1}{p_2}\right)=1$ and $\left(\frac{p_1}{p_2}\right)_4\not=\left(\frac{p_2}{p_1}\right)_4]$, then according to \cite[Theorem 1]{kuvcera1995parity} we have $N_{\QQ(\sqrt{p_1p_2})/\QQ}(\vep_{p_1p_2})=-1$ and   using  \cite[Theorem 2.2]{AzZektaous} we deduce  that  
		$q(K)=2$ or $1$, according to whether one of the units	$\{ {\vep_{p_1p_2}\vep_{2p_1p_2}} ,   {\vep_{2p_1p_2}}\}$ is a square in $K$ or not.
		Thus,	Kuroda's class number formula gives $h_2(K)=\frac12 q(K)h_2(2p_1p_2)$. Therefore,
		$h_2(K)=2$ if and only if $q(K)=1$  and  $h_2(2p_1p_2)=4$. This justifies the form $D)-2)$.

		\item  Now if  $d$ takes  the third form, i.e., $d=pq_1q_2$ with $q_1\equiv q_2\equiv-p\equiv3\pmod 4$ and  $(\left(\frac{p}{q_1}\right)=-1$ or $\left(\frac{p}{q_2}\right)=-1)$, then  according to \cite[  Page 175 : $c_{12}-c_{14}$]{BS95}, we have  Table \ref{2dabelien} that characterizes when do we have exactly $h(2pq_1q_2)=4$. 
		{\footnotesize	\begin{center}
				\renewcommand{\arraystretch}{1.6}
				\begin{table}[H]
					\begin{tabular}{c|ccc|ccc}
						\toprule
						Case
						& $p \pmod{8}$
						& $q_1 \pmod{8}$
						& $q_2 \pmod{8}$ & $\left(\dfrac{q_2}{p}\right)$
						& $\left(\dfrac{q_2}{q_1}\right)$ &  $\left(\dfrac{p}{q_1}\right)$ \\
						\midrule
						
						$c_{12}$ & $5$ & $3$ & $3$ & $1$ & $-1$  &$-1$\\
						$c_{13}$ & $5$ & $3$ & $3$ & $1$ & $1$  &$-1$\\
						$c_{14}$ & $5$ & $7$ & $3$ & $1$ & $-1$  &$-1$ \\
						\bottomrule
					\end{tabular}
					\caption{Conditions on $p$, $q_1$ and $q_2$ ensuring $h_2(2pq_1q_2)=4$ in abelian $2$-tower case where $p\equiv -q_1\equiv-q_2\equiv1\pmod4$}\label{2dabelien}\label{abelien2pq1q2}
				\end{table}
		\end{center}}
		On the other hand, assuming that $d=2pq_1q_2$, where $p$, $q_1$ and $q_2$ are prime numbers satisfying one of   the conditions in  Table \ref{abelien2pq1q2}, we make use of Table  \ref{tab8} and Wada's method to check that we have $q(K)=1$. Thus under each of these conditions we have $h_2(K)=2$.  This justifies the form $E)$. 	
	\end{enumerate}
	Combining all of the above investigations,   we complete  the proof of our classification in the above theorem.	
	\end{enumerate}	\end{proof}
	

\subsection{Real biquadratic fields  $K=\QQ(\sqrt{p},\sqrt{d})$ with $2$-class numbers equal   $2$ }In this subsection, we give the classification of the fields $K=\QQ(\sqrt{p},\sqrt{d})$ whose  class numbers   equal $2$.
Let us start by proving  the following useful lemma.
\begin{lemma}\label{unitspp1q1q2}
Let $p$, $p_1$, $q_1$  and $q_2$ be distinct prime numbers such that $p\equiv p_1\equiv-q_1\equiv-q_2\equiv1\pmod4$. Put $\vep_{pp_1q_1q_2}=a+b\sqrt{pp_1q_1q_2}$ (resp. $\vep_{p_1q_1q_2}=x+y\sqrt{p_1q_1q_2}$)   the fundamental unit of $\QQ(\sqrt{pp_1q_1q_2})$ (resp. $\QQ(\sqrt{p_1q_1q_2})$) for some integers or semi-integers $a$, $b$, $x$ and $y$. We have: 
\begin{enumerate}[$1)$]
	\item If $\left(\frac{p}{p_1}\right)=\left(\frac{p}{q_1}\right)=\left(\frac{p}{q_2}\right)=\left(\frac{p_1}{q_1}\right)=-\left(\frac{p_1}{q_2}\right)=-1,$ then \begin{enumerate}[$a)$]
		\item $2pq_1(a\pm1)$ and $2q_2(x\pm1)$  are  squares in $\QQ$,
		
		\item The unit group of $\QQ(\sqrt{p},\sqrt{p_1q_1q_2})$ is given by $\langle{-1,\vep_{p},\vep_{p_1q_1q_2},\vep_{pp_1q_1q_2}\rangle}$.
	\end{enumerate}
	\item If $\left(\frac{p}{p_1}\right)=-\left(\frac{p}{q_1}\right)=\left(\frac{p}{q_2}\right)=\left(\frac{p_1}{q_1}\right)=-\left(\frac{p_1}{q_2}\right)=\left(\frac{q_1}{q_2}\right)=-1,$ then 
	\begin{enumerate}[$a)$]
		\item $2pq_1(a+1)$ and $2q_2(x+1)$ are squares in $\QQ$,
		\item The unit group of $\QQ(\sqrt{p},\sqrt{p_1q_1q_2})$ is given by $\langle{-1,\vep_{p},\vep_{p_1q_1q_2},\vep_{pp_1q_1q_2}\rangle}$.
	\end{enumerate}
\end{enumerate}
\end{lemma}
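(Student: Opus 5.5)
The plan is to handle parts $a)$ by the classical descent argument for the norm equation of a fundamental unit of positive norm. Parts $b)$ will then follow from Wada's algorithm (cf. Page \pageref{wadaalgo}).

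For $a)$, first note that $N(\vep_{pp_1q_1q_2})=N(\vep_{p_1q_1q_2})=1$, since $q_1\mid pp_1q_1q_2$ and $q_1\mid p_1q_1q_2$ with $q_1\equiv 3\pmod 4$. Hence $a^2-1=b^2pp_1q_1q_2$, that is, $(a+1)(a-1)=b^2pp_1q_1q_2$. By the standard lemma (as used in the references behind Tables \ref{tab7} and \ref{tab8}), there is a square-free divisor $m$ of $2pp_1q_1q_2$ with $m\neq 1$ and $m\neq pp_1q_1q_2$ (up to the factor $2$), such that $m(a\pm1)$ and $m'(a\mp1)$ are squares, where $mm'$ equals $pp_1q_1q_2$ times a square. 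Equivalently, $\sqrt{2\vep}=\sqrt{a+1}+\sqrt{a-1}$ up to such factors, so $\sqrt{\vep}=[\sqrt{m_1},\sqrt{m_2}]$ in the sense of Notation \ref{squarenot}.

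To pin down $m$, I will write $a\pm1=m y_1^2$ and $a\mp1=m' y_2^2$. Then $my_1^2-m'y_2^2=\pm2$. Reducing modulo each prime $\ell\mid pp_1q_1q_2$ gives Legendre symbol constraints: for $\ell\mid m'$ one needs $\left(\frac{\pm 2m}{\ell}\right)=1$, and for $\ell\mid m$ one needs $\left(\frac{\mp 2m'}{\ell}\right)=1$. Running through the $2^5$ possible divisors with the hypotheses $\left(\frac{p}{p_1}\right)=\left(\frac{p}{q_1}\right)=\left(\frac{p}{q_2}\right)=\left(\frac{p_1}{q_1}\right)=-1$ and $\left(\frac{p_1}{q_2}\right)=1$, I expect all possibilities except $m=2pq_1$ (or its complement) to contradict some symbol. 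This yields $2pq_1(a\pm1)\in\QQ^{2}$. The same method for $x^2-1=y^2p_1q_1q_2$ should force $2q_2(x\pm1)$ to be a square. In case $2)$ the extra hypothesis $\left(\frac{q_1}{q_2}\right)=-1$, together with the sign change $\left(\frac{p}{q_1}\right)=1$, will additionally fix the sign to $a+1$ and $x+1$, via the symbols modulo $q_1,q_2$ and the value of $\left(\frac{-1}{q_i}\right)$. This elimination is the main obstacle. It is routine but error-prone: the residue of $2$ modulo $p,p_1$ is not specified, so those symbols must be avoided. I would choose the primes $\ell$ whose symbols are determined by the hypotheses (mostly $q_1,q_2$ together with quadratic reciprocity) to kill each candidate.

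For $b)$, set $K=\QQ(\sqrt p,\sqrt{p_1q_1q_2})$, whose quadratic subfields have fundamental units $\vep_p$, $\vep_{p_1q_1q_2}$ and $\vep_{pp_1q_1q_2}$. Since $N(\vep_p)=-1$, $\vep_p$ and any product involving it cannot be a square in $K$ (take norms to $\QQ(\sqrt p)$). So, by Wada's algorithm, it suffices to show that none of $\vep_{p_1q_1q_2}$, $\vep_{pp_1q_1q_2}$, $\vep_{p_1q_1q_2}\vep_{pp_1q_1q_2}$ is a square in $K$, possibly after multiplication by $\vep_p$ (excluded by the norm argument). By $a)$ we have $\sqrt{\vep_{pp_1q_1q_2}}\in\QQ(\sqrt{2pq_1},\sqrt{2p_1q_2})$ and $\sqrt{\vep_{p_1q_1q_2}}\in\QQ(\sqrt{2q_2},\sqrt{2p_1q_1})$. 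Since $\sqrt2\notin K$, neither of these lies in $K$. For the product, $\sqrt{\vep_{p_1q_1q_2}\vep_{pp_1q_1q_2}}$ lies in a field generated by $\sqrt{2pq_1}\sqrt{2q_2}$-type square roots; I will check that it involves $\sqrt{pq_1q_2}$ or a similar element not in $K$, since $K$ contains only $\sqrt p$, $\sqrt{p_1q_1q_2}$ and $\sqrt{pp_1q_1q_2}$. This gives $q(K)=1$ and the stated unit group.
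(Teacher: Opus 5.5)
Your approach is the paper's: a descent on $(a+1)(a-1)=pp_1q_1q_2b^2$ with Legendre-symbol eliminations for $a)$, then Wada's algorithm for $b)$. Working with square classes is slightly cleaner than the paper's four-way split into integral and semi-integral cases. In both cases $a\pm1=2\delta Y_1^2$ and $a\mp1=2\delta' Y_2^2$ with $\delta\delta'=pp_1q_1q_2$ and $Y_i\in\tfrac12\ZZ$. So your $m$ is always even, $\left(\frac{2}{\ell}\right)$ never enters, and reduction modulo odd $\ell$ is legitimate. Your plan to run through all splittings is also more careful than the paper's integral case for $a$, which lists only three of the seven.

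The elimination you only announce does go through. Modulo $p$ and $p_1$, where $-1$ is a square so the sign is irrelevant, every splitting is killed except $\{pq_1,p_1q_2\}$ for $a$ and $\{q_2,p_1q_1\}$ for $x$. Modulo $q_2$, the sign comes out $+$ (for both $a$ and $x$) exactly when $\left(\frac{q_1}{q_2}\right)=-1$, which gives the refinement in case $2)$.

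In $b)$, however, two steps fail as written.

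First, $2pq_1(a\pm1)\in\QQ^{2}$ does not place $\sqrt{\vep_{pp_1q_1q_2}}$ in $\QQ(\sqrt{2pq_1},\sqrt{2p_1q_2})$; that is where $\sqrt{2\vep_{pp_1q_1q_2}}$ lives. Since $\sqrt{\vep}=\sqrt{(a+1)/2}+\sqrt{(a-1)/2}$, the two factors of $2$ cancel. One gets $\sqrt{\vep_{pp_1q_1q_2}}=b_1\sqrt{pq_1}+b_2\sqrt{p_1q_2}$ and $\sqrt{\vep_{p_1q_1q_2}}=y_1\sqrt{q_2}+y_2\sqrt{p_1q_1}$, with all coefficients nonzero, exactly as in the paper. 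So ``$\sqrt2\notin K$'' is not the reason these roots lie outside $K$. The actual reason is that $K=\QQ+\QQ\sqrt p+\QQ\sqrt{p_1q_1q_2}+\QQ\sqrt{pp_1q_1q_2}$, viewed inside $\QQ(\sqrt p,\sqrt{p_1},\sqrt{q_1},\sqrt{q_2})$, contains none of $\sqrt{pq_1}$, $\sqrt{p_1q_2}$, $\sqrt{q_2}$, $\sqrt{p_1q_1}$. Your planned product check still works, because the spurious $\sqrt2$'s cancel. The product of the two roots is a combination, with nonzero coefficients, of $\sqrt{pq_1q_2}$, $\sqrt{p_1}$, $\sqrt{pp_1}$, $\sqrt{q_1q_2}$, none of which lies in $K$.

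Second, taking the norm to $\QQ(\sqrt p)$ cannot exclude $\vep_p$. For $\eta$ a product of $\vep_{p_1q_1q_2}$ and $\vep_{pp_1q_1q_2}$, one has $N_{K/\QQ(\sqrt p)}(\vep_p\eta)=\vep_p^2$, which is a square. Take the norm to $\QQ(\sqrt{p_1q_1q_2})$ instead: this gives $-\vep_{p_1q_1q_2}^{2j}<0$, which is not a square in a real field. Alternatively, note that $\vep_p$ is not totally positive while squares in $K$ are.

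With these two repairs your argument is complete and coincides with the paper's.
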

\begin{proof}
As \(N(\varepsilon_{pp_1q_1q_2}) = N(\varepsilon_{p_1q_1q_2}) = 1\) and \(N(\varepsilon_p) = -1\)  we need to determine the   the forms of:
$$\sqrt{\varepsilon_{p_1q_1q_2}}, \;\;\sqrt{\varepsilon_{pp_1q_1q_2}}  \text{ and }  \sqrt{\varepsilon_{p_1q_1q_2}\varepsilon_{pp_1q_1q_2}}.$$
Here we give the proof of the first item in details, and the prove of the second item is analogous. Let us assume that 
$\left(\frac{p}{p_1}\right)=\left(\frac{p}{q_1}\right)=\left(\frac{p}{q_2}\right)=\left(\frac{p_1}{q_1}\right)=-\left(\frac{p_1}{q_2}\right)=-1.$ So we have three main cases to investigate:

	\noindent\ding{229} Assume that $x,y, a$ and $b$ are integers.  Then $x^2-p_1q_1q_2y^2=a^2-pp_1q_1q_2b^2=1$. Then by  taking into account the unique factorization in $\ZZ$ and using Lemma \ref{ZekTAo} we have one of the following systems: 
 	$$(1):\ \left\{ \begin{array}{ll}
		x\pm 1=2p_1y_1^2\\
		x\mp 1=2q_1q_2 y_2^2,
	\end{array}\right. \quad
	\text{or}\quad
	(2):\ \left\{ \begin{array}{ll}
		x\pm 1=2q_1y_1^2\\
		x\mp 1=2p_1q_2 y_2^2,
	\end{array}\right.\quad
	\text{or}\quad
	(3):\ \left\{ \begin{array}{ll}
		x\pm 1=2q_2y_1^2\\
		x\mp 1=2p_1q_1 y_2^2,
	\end{array}\right.\quad
	$$	
	where $y_1$ and $y_2$ are two integers such that  $y=2y_1 y_2$.
	
	\begin{enumerate}[$\star$]
		\item The first system can not occur, since it implies that :
		$$\left(\frac{2}{p_1}\right)=\left(\frac{\mp 2}{p_1}\right)=\left(\frac{x\pm1\mp 2}{p_1}\right)=\left(\frac{x\mp1}{p_1}\right)=\left(\frac{2q_1q_2}{p_1}\right)=-\left(\frac{2}{p_1}\right).$$ 
		\item Similarly the second system implies that: 
		$$\left(\frac{2}{p_1}\right)=\left(\frac{\pm 2}{p_1}\right)=\left(\frac{x\mp1\pm 2}{p_1}\right)=\left(\frac{x\pm1}{p_1}\right)=\left(\frac{2q_1}{p_1}\right)=-\left(\frac{2}{p_1}\right).$$ 
		
	\end{enumerate}
	From the third system we get $2q_2(x+1)$ or $2q_2(x-1)$ is a square in $\NN$ according to whether $\left(\frac{q_1}{q_2}\right)=-1$ or not. Then $\sqrt{\vep_{p_1q_1q_2}}=y_1\sqrt{q_2}+y_2\sqrt{p_1q_1}$. Similarly we have  the following possible systems :
	
	$$(1):\ \left\{ \begin{array}{ll}
		a\pm 1=2pp_1b_1^2\\
		a\mp 1=2q_1q_2 b_2^2,
	\end{array}\right. \quad
	\text{or}\quad
	(2):\ \left\{ \begin{array}{ll}
		a\pm 1=2pq_1b_1^2\\
		a\mp 1=2p_1q_2 b_2^2,
	\end{array}\right.\quad
	\text{or}\quad
	(3):\ \left\{ \begin{array}{ll}
		a\pm 1=2pq_2b_1^2\\
		a\mp 1=2p_1q_1 b_2^2,
	\end{array}\right.\quad
	$$	
	where $b_1$ and $b_2$ are two integers such that  $b=2b_1 b_2$.
	
	\begin{enumerate}[$\star$]
		\item The first system implies that: 
		$$\left(\frac{2}{q_2}\right)=\left(\frac{a+1 }{q_2}\right)=\left(\frac{a-1+ 2}{q_2}\right)=\left(\frac{2pp_1}{q_2}\right)=-\left(\frac{2}{q_2}\right), \;\;\text{or}$$ 
		
		$$\left(\frac{-2}{q_1}\right)=\left(\frac{a-1 }{q_1}\right)=\left(\frac{a+1- 2}{q_1}\right)=\left(\frac{2pp_1}{q_1}\right)=\left(\frac{2}{q_1}\right)$$
		which is a contradiction.
		\item By the same way we prove that the third system can bot occur. Finally it remains the second system and from which we get $2pq_1(x+1)$ or $2pq_1(x-1)$ is square according to whether $\left(\frac{q_1}{q_2}\right)=-1$ or not. Therefore  $\sqrt{\vep_{pp_1q_1q_2}}=b_1\sqrt{pq_1}+b_2\sqrt{p_1q_2}$. Then by using Wada's algorithm we conclude that  $q(\QQ(\sqrt{p},\sqrt{p_1q_1q_2}))=1$.
		
	\end{enumerate}
	\noindent\ding{229} Now we assume that  $x$ and $y$ are integers, $a$ and $b$ are semi integers such that  $a=\alpha/2$ and $b=\beta/2$ for some integers $\alpha$  and $\beta$. According to the previous case we have $\sqrt{\vep_{p_1q_1q_2}}=y_1\sqrt{q_2}+y_2\sqrt{p_1q_1}$. For the second square we proceed as before. Then from $\alpha^2-4=pp_1q_1q_2\beta^2$ and according to Lemma \ref{ZekTAo} we have the following possible systems: 
		{\footnotesize	\begin{center}
			\renewcommand{\arraystretch}{1.6}
			\begin{table}[H]
				\begin{tabular}{cccc}
					 $(1):\ \left\{ \begin{array}{ll}
						\alpha\pm 2=p\beta_1^2\\
						\alpha\mp 2=p_1q_1q_2 \beta_2^2,
					\end{array}\right. $
					& $(2):\ \left\{ \begin{array}{ll}
						\alpha\pm 2=p_1\beta_1^2\\
						\alpha\mp 2=pq_1q_2 \beta_2^2,
					\end{array}\right. $
					& $(3):\ \left\{ \begin{array}{ll}
						\alpha\pm 2=q_1\beta_1^2\\
						\alpha\mp 2=pp_1q_2 \beta_2^2,
					\end{array}\right. $ & $(4):\ \left\{ \begin{array}{ll}
					\alpha\pm 2=q_2\beta_1^2\\
					\alpha\mp 2=pp_1q_1 \beta_2^2,
					\end{array}\right. $\\

					$(5):\ \left\{ \begin{array}{ll}
						\alpha\pm 2=pp_1\beta_1^2\\
						\alpha\mp 2=q_1q_2 \beta_2^2,
					\end{array}\right. $& $(6):\ \left\{ \begin{array}{ll}
					\alpha\pm 2=pq_1\beta_1^2\\
					\alpha\mp 2=p_1q_2 \beta_2^2,
					\end{array}\right.$& $(7):\ \left\{ \begin{array}{ll}
					\alpha\pm 2=pq_2\beta_1^2\\
					\alpha\mp 2=p_1q_1\beta_2^2.
					\end{array}\right.$&

				\end{tabular}
				
			\end{table}
	\end{center}}
	where $\beta_1$ and $\beta_2$ are integers such that $\beta=\beta_1\beta_2$.
	\begin{enumerate}[$\star$]
		\item The first system can not occur since it implies:

		$$1=\left(\frac{\pm4}{p_1}\right)=\left(\frac{\alpha\mp2\pm4}{p_1}\right)=\left(\frac{\alpha\pm2}{p_1}\right)=\left(\frac{p}{p_1}\right)=-1.$$
		\item Similarly for the second system:
		$$1=\left(\frac{\pm4}{p}\right)=\left(\frac{\alpha\mp2\pm4}{p}\right)=\left(\frac{\alpha\pm2}{p}\right)=\left(\frac{p_1}{p}\right)=-1.$$
		
		\item Similarly for the third system:
		$$1=\left(\frac{\pm4}{p}\right)=\left(\frac{\alpha\mp2\pm4}{p}\right)=\left(\frac{\alpha\pm2}{p}\right)=\left(\frac{q_1}{p}\right)=-1.$$
		\item Similarly for the fourth system:
		$$1=\left(\frac{\pm4}{p}\right)=\left(\frac{\alpha\mp2\pm4}{p}\right)=\left(\frac{\alpha\pm2}{p}\right)=\left(\frac{q_2}{p}\right)=-1.$$

		\item Similarly for the fifth system:
		$$1=\left(\frac{4}{q_2}\right)=\left(\frac{\alpha-2+4}{q_2}\right)=\left(\frac{\alpha+2}{q_2}\right)=\left(\frac{pp_1}{q_2}\right)=-1,\;\text{or}$$
		$$-1=\left(\frac{-4}{q_1}\right)=\left(\frac{\alpha+2-4}{q_1}\right)=\left(\frac{\alpha-2}{q_1}\right)=\left(\frac{pp_1}{q_1}\right)=1.$$

		\item The same thing for the seventh system:
		
		$$1=\left(\frac{\pm4}{p_1}\right)=\left(\frac{\alpha\mp2\pm4}{p_1}\right)=\left(\frac{\alpha\pm2}{p_1}\right)=\left(\frac{pq_2}{p_1}\right)=-1.$$
		
	\end{enumerate}
	Then it remains the sixth system which implies that $pq_1(x+2)$ or $pq_1(x-2)$ is a square in $\NN$ according to whether   $\left(\frac{q_1}{q_2}\right)=-1$ or not. Therefore $\sqrt{\vep_{pp_1q_1q_2}}=\frac{1}{2}(\beta_1\sqrt{pq_1}+\beta_2\sqrt{p_1q_2})$ and so  $q(\QQ(\sqrt{p},\sqrt{p_1q_1q_2}))=1$.

	\noindent\ding{229} Now if that  $x$, $y$, $a$ and $b$ are semi-integers. Then $\sqrt{\vep_{pp_1q_1q_2}}=\frac{1}{4}(\beta_1\sqrt{pq_1}+\beta_2\sqrt{p_1q_2})$. Moreover we have $z^2-4=t^2p_1q_1q_2$, where $z$ and $t$ are integers such that $x=z/2$ and $y=t/2$, then Lemma \ref{ZekTAo} implies the following possible systems:

	$$(1):\ \left\{ \begin{array}{ll}
		z\pm 2=p_1t_1^2\\
		z\mp 2=q_1q_2 t_2^2,
	\end{array}\right. \quad
	\text{or}\quad
	(2):\ \left\{ \begin{array}{ll}
		z\pm 2=q_1t_1^2\\
		z\mp 2=p_1q_2 t_2^2,
	\end{array}\right.\quad
	\text{or}\quad
	(3):\ \left\{ \begin{array}{ll}
		z\pm 2=q_2t_1^2\\
		z\mp 2=p_1q_1 t_2^2,
	\end{array}\right.\quad
	$$	
	where $t_1$ and $t_2$ are two integers such that  $t=t_1 t_2$. Using the same argument as in the previous two cases we check that $q_2(x+2)$ or $q_2(x-2)$ is square in $\NN$ according to whether $\left(\frac{q_1}{q_2}\right)=-1$ or not. Thus $\sqrt{\vep_{p_1q_1q_2}}=\frac{1}{2}(t_1\sqrt{q_2}+t_2\sqrt{p_1q_1})$ and $q(\QQ(\sqrt{p},\sqrt{p_1q_1q_2}))=1$.

	\noindent\ding{229} Finally if   $x$ and $y$ are semi integers, a and b are integers. Then from the previous sub-cases we have $\sqrt{\vep_{p_1q_1q_2}}=\frac{1}{2}(t_1\sqrt{q_2}+t_2\sqrt{p_1q_1})$  and  $\sqrt{\vep_{pp_1q_1q_2}}=b_1\sqrt{pq_1}+b_2\sqrt{p_1q_2}$. Then  $q(\QQ(\sqrt{p},\sqrt{p_1q_1q_2}))=1$.
 
This completes the proof.
\end{proof}

\begin{theorem}\label{classpd}
Let  $p\equiv1\pmod4$ be a prime number and   $d$ a  positive square-free integer co-prime with $p$. Put $K=\QQ(\sqrt{p},\sqrt{d})$.	Then $h_2(K)=2$ if and only if, $d$ is in one of the following forms: 
\begin{enumerate}[$A)$]
	\item $d=2$, with $p\equiv1\pmod8$, $\left(\frac{2}{p}\right)_4=(-1)^{\frac{p-1}{8}}$ and $h_2(2p)=4.$
	\item $d=p_1,$ with $p_1\equiv1\pmod4$ a prime number such that $h_2(pp_1)=4.$
	
	\item $d=\eta q,$ where $\eta\in \{1,2\}$ and $q\equiv3\pmod4$ is a prime number such that $p\equiv1\pmod8$,  $\left(\frac{p}{q}\right)=1$ and  $h_2(\eta pq)=4$.
	
	\item $d=q_1q_2$, where $q_1\equiv q_2\equiv
	3\pmod4$ are distinct prime numbers  such that  $\left(\frac{p}{q_1}\right)=\left(\frac{p}{q_2}\right)=1$ and $h_2(pq_1q_2)=4.$
	\item $d=p_1q$, where $p_1\equiv-q\equiv1\pmod4$ are distinct prime numbers such that $p_1\equiv5\pmod8$ and one of the following conditions holds: 
	\begin{enumerate}[$1)$]
		\item $p\equiv1\pmod8$ and $\left(\frac{p}{p_1}\right)=\left(\frac{p}{q}\right)=-\left(\frac{p_1}{q}\right)=-1,$
		\item $p\equiv1\pmod8, q\equiv3\pmod8$ and $\left(\frac{p}{p_1}\right)=\left(\frac{p}{q}\right)=\left(\frac{p_1}{q}\right)=-1,$
		
		\item $p\equiv5\pmod8$ and $\left(\frac{p}{p_1}\right)=-\left(\frac{p}{q}\right)=\left(\frac{p_1}{q}\right)=-1.$ 
	\end{enumerate}
	\item $d=2p_1q$ with $p_1\equiv-q\equiv1\pmod4$ are distinct prime numbers satisfying one of the following conditions: \begin{enumerate}[$1)$]
		\item $p\equiv 1\pmod8, p_1\equiv5\pmod8, q\equiv3\pmod8$ and $\left(\frac{p}{p_1}\right)=-\left(\frac{p_1}{q}\right)=\left(\frac{p}{q}\right)=-1,$ 
		\item $p\equiv p_1\equiv5\pmod8$ and $\left(\frac{p}{p_1}\right)=\left(\frac{p}{q}\right)=-\left(\frac{p_1}{q}\right)=-1,$
		
		\item $p\equiv5\pmod8, p_1\equiv1\pmod8$ and $\left(\frac{p}{p_1}\right)=\left(\frac{p}{q}\right)=\left(\frac{p_1}{q}\right)=-1.$
		
	\end{enumerate}
	\item $d=2q_1q_2$ with $q_1\equiv q_2\equiv3\pmod4$ are distinct prime numbers such that satisfying one of the following conditions:
	\begin{enumerate}[$1)$]
		\item $p\equiv5\pmod8, q_1\equiv q_2\equiv3\pmod8$ and $\left(\frac{p}{q_2}\right)=-\left(\frac{p}{q_1}\right)=1,$
		\item $p\equiv5\pmod8, q_1\equiv 7\pmod8, q_2\equiv3\pmod8$ and $\left(\frac{p}{q_2}\right)=-\left(\frac{p}{q_1}\right)=-\left(\frac{q_2}{q_1}\right)=1.$
	\end{enumerate}
	\item $d=p_1q_1q_2$ with $p_1\equiv -q_1\equiv-q_2\equiv1\pmod4$ are distinct prime numbers satisfying one of the following conditions:
	\begin{enumerate}[$1)$]
		\item $\left(\frac{p}{p_1}\right)=\left(\frac{p}{q_1}\right)=\left(\frac{p}{q_2}\right)=\left(\frac{p_1}{q_1}\right)=-\left(\frac{p_1}{q_2}\right)=-1,$
		\item $\left(\frac{p}{p_1}\right)=-\left(\frac{p}{q_1}\right)=\left(\frac{p}{q_2}\right)=\left(\frac{p_1}{q_1}\right)=-\left(\frac{p_1}{q_2}\right)=\left(\frac{q_1}{q_2}\right)=-1.$
	\end{enumerate}
	\item $d=2p_1$ with  $p_1\equiv1\pmod4$ prime number such satisfying one of the following conditions:
	
	\begin{enumerate}[$1)$]
		\item $p_1\equiv5\pmod8$, $h_2(2pp_1)=4$ and \textbf{none} of the two units $\{\vep_{p}\vep_{2p_1}\vep_{2pp_1}, \vep_{2pp_1}\}$ is a square in $K$.
		\item $p_1\equiv1\pmod8, h_2(2pp_1)=4$, $\left(\frac{2}{p_1}\right)_4\neq \left(\frac{p_1}{2}\right)_4$ and $N(\vep_{2pp_1})=-1$.
	\end{enumerate}   
	\item $d=p_1p_2$ with $p_i\equiv1\pmod4$ distinct prime numbers satisfying one of the following conditions:
	\begin{enumerate}[$1)$]
		\item $\left(\frac{p_1}{p_2}\right)=-1,$ $h_2(pp_1p_2)=4$ and \textbf{none} of the two units $\{\vep_{p}\vep_{p_1p_2}\vep_{pp_1p_2}, \vep_{pp_1p_2}\}$ is a square in $K$,
		\item $\left(\frac{p_1}{p_2}\right)=1, \left(\frac{p_1}{p_2}\right)_4\neq\left(\frac{p_2}{p_1}\right)_4$, $h_2(pp_1p_2)=4$ and \textbf{none} of the  units  $\{\vep_{p_1p_2}\vep_{pp_1p_2},\vep_{p}\vep_{p_1p_2}\vep_{pp_1p_2}, \vep_{p}\vep_{pp_1p_2}\}$  is a square in $K$.
	\end{enumerate}

\end{enumerate}
\end{theorem}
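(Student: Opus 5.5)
The plan follows the pattern of Theorems \ref{classqrs}–\ref{classq1q2d}. We split according to whether $d\equiv1\pmod4$ or not, using $k=\QQ(\sqrt{pd})$ as the key quadratic subfield. Two facts drive everything: $h(p)$ is odd by Lemma \ref{realQuadodd}, and Kuroda's formula \eqref{equa1} reads
\[
h_2(K)=\tfrac14\,q(K)\,h_2(d)\,h_2(pd).
\]

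\emph{Case $d\equiv1\pmod4$.} Here $K/k$ is unramified. By Remark \ref{abelianquaexten}, $h_2(K)=2$ if and only if $h_2(pd)=4$ and $k^{(1)}=k^{(2)}$.

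\begin{enumerate}[$1)$]
\item Genus theory forces the $2$-rank of $A(k)$ to be at most $2$. So $d$ has at most two prime factors when $d$ has no prime $\equiv3\pmod4$, and at most three prime factors including two $q$'s otherwise.
\item Kuroda's formula gives $q(K)h_2(d)=2$, so $h_2(d)\in\{1,2\}$.
\item If $h_2(d)=1$, then $d=p_1$ or $d=q_1q_2$ by Lemma \ref{realQuadodd}. If $A(k)$ is cyclic, the condition is just $h_2(pd)=4$, with the Legendre conditions $\left(\frac{p}{q_i}\right)=1$ coming from the cyclicity / $2$-rank of $A(k)$. This gives forms $B)$ and $D)$.
\item If $h_2(d)=2$, Lemma \ref{realQuadeven1New} gives $d=p_1p_2$ or $d=p_1q_1q_2$, and we need $q(K)=1$ together with $h_2(pd)=4$.
\begin{itemize}
\item For $d=p_1p_2$: $N(\vep_{p})=N(\vep_{p_1p_2})=-1$ by Kučera's theorem. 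Then \cite[Theorem 2.2]{AzZektaous} (as in Theorem \ref{class2d}) gives $q(K)=1$ exactly when the listed products are non-squares in $K$. This is form $J)$.
\item For $d=p_1q_1q_2$: read off from \cite[$c_{12}$–$c_{14}$]{BS95} when $h_2(pp_1q_1q_2)=4$ with abelian tower. Lemma \ref{unitspp1q1q2} then gives $q(K)=1$, yielding form $H)$.
\end{itemize}
\end{enumerate}

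\emph{Case $d\not\equiv1\pmod4$.} This covers $d\equiv 3\pmod 4$ and $d$ even.

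\begin{enumerate}[$1)$]
\item The subcases $d=2$ and $d=\eta q$ with $\eta pq\equiv1 \pmod 4$ impossible directly: treat them via the unramified extension $K/\QQ(\sqrt{2pq})$ or $K/\QQ(\sqrt{pq})$ where appropriate, exactly as in Theorem \ref{classqrs}$\,A)$ and Theorem \ref{class2d}$\,B)$. This produces forms $A)$ and $C)$; Kučera's criterion supplies the condition $\left(\frac{2}{p}\right)_4=(-1)^{(p-1)/8}$.
\item When $K/k$ is ramified, $h_2(k)\mid h_2(K)$, so $h_2(pd)\in\{1,2\}$. Lemmas \ref{realQuadodd} and \ref{realQuadeven1New} then restrict $pd$, hence $d$, to $p_1q$, $2p_1q$, $2q_1q_2$ or $2p_1$.
\item For each of these, compute $h_2(d)$ from Lemma \ref{realQuadeven1New}. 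Determine $q(K)$ via Wada's algorithm and the square-root tables (Tables \ref{tab7}, \ref{tab8}), i.e. decide which of $\sqrt{\vep_{d}}$, $\sqrt{\vep_{pd}}$ and their products with $\vep_p$ lie in $K$.
\item Impose $q(K)h_2(d)h_2(pd)=8$. This yields the congruence and Legendre conditions in $E)$, $F)$, $G)$ and $I)$.
\item For $d=2q_1q_2$ one can instead use the tower classification \cite[$c_{12}$–$c_{14}$]{BS95} with $k=\QQ(\sqrt{2pq_1q_2})$, as in Theorem \ref{class2d}$\,E)$.
\end{enumerate}

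The main obstacle is the unit index computations. We must determine exactly when $\sqrt{\vep_{p_1q}}$, $\sqrt{\vep_{pp_1q}}$, etc. lie in $K$ under each configuration of residues mod $8$ and Legendre symbols. I would handle these by the same descent as in Lemma \ref{unitspp1q1q2}:
\begin{enumerate}[$1)$]
\item Write $\vep=x+y\sqrt{m}$ and factor $x\pm1$ (or $\alpha\pm2$).
\item Eliminate systems by Legendre symbol contradictions, leaving the form $[\sqrt{m_1},\sqrt{m_2}]$.
\item Feed this into Wada's algorithm.
\end{enumerate}
The delicate bookkeeping is in $E)$ and $F)$, where $q(K)$ switches between $1$ and $2$ depending on $\left(\frac{p_1}{q}\right)$ and $q\bmod 8$.
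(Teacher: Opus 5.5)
Your case $d\equiv1\pmod4$ is essentially the paper's argument, but the second case rests on a false premise. For \emph{every} square-free $d$ prime to $p$, the extension $K/k$ with $k=\QQ(\sqrt{pd})$ is unramified. Indeed $K=k(\sqrt{p})$ with $p\equiv1\pmod4$, so only primes above $p$ can ramify in $K/k$. But $p$ is already ramified in $k$, and its ramification index in $K/\QQ$ is $2$, since it is unramified in $\QQ(\sqrt{d})$. None of this depends on $d\bmod 4$; you even use it yourself for $d=q$ and $d=2q$. So the step ``$K/k$ ramified, hence $h_2(pd)\in\{1,2\}$'' is wrong, and the error is fatal. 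By genus theory the $2$-rank of $A(\QQ(\sqrt{pd}))$ is $2$ for $d=p_1q,\,2p_1q,\,2q_1q_2,\,2p_1$, so $h_2(pd)\ge4$, and your restriction would discard exactly the fields of $E)$, $F)$, $G)$ and $I)$. The correct reduction is uniform in $d$. Remark \ref{abelianquaexten} forces $h_2(pd)=4$ with $\QQ(\sqrt{pd})^{(1)}=\QQ(\sqrt{pd})^{(2)}$, and Kuroda's formula then reads $h_2(K)=q(K)h_2(d)$, so $h_2(d)\in\{1,2\}$. Your list $p_1q,2p_1q,2q_1q_2,2p_1$ is really the list of $d\not\equiv1\pmod4$ with $h_2(d)=2$. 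For these you need $q(K)=1$ \emph{and} $h_2(pd)=4$.

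What is then missing is any way to decide when $h_2(pp_1q)=4$ or $h_2(2pp_1q)=4$. Your steps 3--4 only compute $q(K)$, so they cannot produce the congruence and Legendre conditions of $E)$ and $F)$. Those conditions come mostly from the requirement $h_2(pd)=4$, not from the unit index. The paper obtains them from the Benjamin--Snyder classification of real quadratic fields with $h_2=4$ and abelian $2$-class field tower. These are cases $c_{15}$--$c_{20}$ and $c_{19}$--$c_{26}$ of \cite{BS95} (Tables \ref{qrsabelien} and \ref{abelien2qrs}), with Table \ref{abelien2pq1q2} for $d=2q_1q_2$. Restricting to the abelian-tower list is legitimate exactly because $K/k$ is unramified. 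The unit tables then give $q(K)=1$ in all these cases except $c_{17}$ for $d=p_1q$ and $c_{19}$ for $d=2p_1q$. For $d=2p_1$, the condition $h_2(2pp_1)=4$ is simply kept in the statement, and $q(K)$ is read off from \cite{AzZektaous} together with Ku{\v{c}}era's norm criterion. A smaller slip: in $J)$-$2)$ one has $N(\vep_{p_1p_2})=+1$, not $-1$. That is why the list of square candidates there also contains $\vep_{p_1p_2}\vep_{pp_1p_2}$.
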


\begin{proof}
Let $d$ be a positive square-free integer co-prime with $p$. Notice that the extension  $K/k$, with $k=\QQ(pd)$, is an unramified quadratic extension. Assume that  $h_2(K)=2$. Then $h_2(pd)=4$ and by   Kuroda's class number formula, we deduce that   
$h_2(d)=1$ (in this case $q(K)=2$) or $h_2(d)=2$ (in this case $q(K)=1$). So we have two main cases to investigate.

\noindent\ding{229}	  Assume that $ h_2(d)=1$. By Lemma \ref{realQuadodd}, $d\in \{2,p_1,q,2q, q_1q_2\}$, where $p_1\equiv-q\equiv-q_1\equiv-q_2\equiv1\pmod4$ are distinct prime numbers. In particular, we have:
\begin{enumerate}[$\star$]
	\item Letting $d=2$, then as in the proof of Theorem \ref{class2d}, we deduce that $h_2(K)=2$ if and only if 
	$p\equiv1\pmod8$, $\left(\frac{2}{p}\right)_4=(-1)^{\frac{p-1}{8}}$ and $h_2(2p)=4$.   This justifies the form $A)$.
	\item Letting $d=p_1$, with $p_1\equiv1\pmod4$   a prime number.   Thus, by class field theory,  $h_2(K)=2$ exactly when $h_2(pp_1)=4$. This justifies the form $B)$.
	\item Letting  $d=\eta q$ with $q\equiv3\pmod4$ prime number, we deduce from the condition
	$h_2(\eta pq)=4$ that we have  $p\equiv1\pmod8$ and $\left(\frac{p}{q}\right)=1$ (cf. Lemma \ref{realQuadeven1New}).
	Thus, in this case,   by class field theory  $h_2(K)=2$ if and only if  $p\equiv1\pmod8$,  $\left(\frac{p}{q}\right)=1$ and $h_2(\eta pq)=4$.
	This justifies the form $C)$.
	
	\item $d=q_1q_2$ where $q_1\equiv q_2\equiv3\pmod4$ are distinct prime numbers. Notice that the condition $h_2(pq_1q_2)=4$ implies that  $\left(\frac{p}{q_1}\right)=\left(\frac{p}{q_2}\right)=1$ (cf. Lemma \ref{realQuadeven1New}). So by class field theory  we have   $h_2(K)=2$ if and only of $h_2(pq_1q_2)=4$. This justifies the form $D)$.
\end{enumerate}

\noindent\ding{229}	  Now assume that $ h_2(d)=2$. By Lemma  \ref{realQuadeven1New}, $d\in \{p_1q,2p_1q,2q_1q_2,p_1q_1q_2, 2p_1, p_1p_2\}$, where $p_1\equiv p_2\equiv-q_1\equiv q_2 \equiv 1\pmod4$ are prime numbers that satisfy extra condition as in each case   below.   
\begin{enumerate}[$\star$]
	\item Let $d=p_1q$, where $p_1\equiv-q\equiv1\pmod4$ are  prime numbers such that  $p_1\equiv5\pmod 8$ or $\left(\frac{p_1}{q}\right)=-1$. Thus, 
	$h_2(pp_1q)=4$, then (by letting $p=r$ and $p_1=s$) the primes $p, p_1$ and $q$ satisfy the conditions $c_{15}-c_{20}$ in Table \ref{qrsabelien} (cf. \cite[page 177-178]{BS95}). 
	
	Conversely using Tables \ref{tab7}, \ref{tab8} and  Wada's algorithm (cf. Page \pageref{wadaalgo}) we check that $E_K=\langle{-1,\vep_{p},\vep_{p_1q} , \vep_{pp_1q}\rangle} $ for all such cases of Table \ref{qrsabelien} except the case $c_{17}$ which gives $q(K)=2$ (In fact, in this case we have $E_K=\langle{-1,\vep_{p},\vep_{p_1q} , \sqrt{\vep_{p_1q}\vep_{pp_1q}}\rangle} $). Hence, $h_2(K)=2$ if and only if we have one of the conditions listed from $E)$-$1)$ to $E)$-$3)$.

	\item Let $d=2p_1q$, where $p_1\equiv-q\equiv1\pmod4$ are prime numbers such that $p_1\equiv5\pmod 8$ or $\left(\frac{p_1}{q}\right)=-1$. Notice  that if $h_2(2pp_1q)=4$, then the primes $p, p_1$ and $q$ satisfy one of the conditions $c_{19}-c_{22}$, $c_{25}- c_{26}$ in Table \ref{abelien2qrs} (cf. \cite[Page 176]{BS95}).
	Conversely by invoking Table \ref{tab8} in all previous cases except case $c_{19}$ we check that $E_K=\langle{-1,\vep_{p},\vep_{2p_1q} , \vep_{2pp_1q}\rangle} $, i.e.,   $q(K)=1$, which gives $h_2(K)=2$. For the case $c_{19}$ we get $E_K=\langle{-1,\vep_{p},\vep_{2p_1q} , \sqrt{\vep_{2pp_1q}\vep_{2p_1q}}\rangle}$, then  $h_2(K)=4$ and so this case is rejected. This justifies the forms listed from $F)$-$1)$ to $F)$-$3)$.
	\item Let $d=2q_1q_2$, with $q_1\equiv q_2\equiv3\pmod 4$ are distinct prime numbers such that $q_1$ or $q_2\equiv3\pmod8$.
	We note that the primes satisfying $h_2(2pq_1q_2)=4$ are exactly those satisfying one of the conditions in Table \ref{abelien2pq1q2}. Conversely, by making use of Tables \ref{tab7} and \ref{tab8} and Wada's algorithm we check that in all these  cases we have $q(K)=1$, which means that $h_2(K)=2$. This justifies the forms $G)$-$1)$ and $G)$-$2)$.
	\item Let $d=p_1q_1q_2$, where $p_1\equiv-q_1\equiv-q_2\equiv1\pmod 4$ are  distinct prime numbers such that  $\left(\frac{p_1}{q_1}\right)=-1$ or $\left(\frac{p_1}{q_2}\right)=-1$. Notice that $K/\mathbb{Q}(\sqrt{pp_1q_1q_2})$ is an unramified quadratic extension. So $h_2(K)=2$ implies that  $h_2(pp_1q_1q_2)=4$.
	Moreover,  if $h_2(pp_1q_1q_2)=4$, then according to \cite[page 175, cases $c_{12}-c_{14}$]{BS95}, the primes $p$, $p_1$, $q_1$ and $q_2$ satisfy one of the following conditions: 
	\begin{enumerate}[$\C1)$]
		\item $\left(\frac{p}{p_1}\right)=\left(\frac{p}{q_1}\right)=\left(\frac{p}{q_2}\right)=\left(\frac{p_1}{q_1}\right)=-\left(\frac{p_1}{q_2}\right)=-1,$
		\item $\left(\frac{p}{p_1}\right)=-\left(\frac{p}{q_1}\right)=\left(\frac{p}{q_2}\right)=\left(\frac{p_1}{q_1}\right)=-\left(\frac{p_1}{q_2}\right)=\left(\frac{q_1}{q_2}\right)=-1.$
	\end{enumerate}
	
	Conversely, we check that if one of the   conditions  $\C1)$ and $\C2)$ holds, then making use of  Lemma \ref{unitspp1q1q2}, we check that $q(K)=1$. Since $2h_2(p_1q_1q_2)=h_2(pp_1q_1q_2)=4h_2(p)=4$, it follows that $h_2(K)=2$. This justifies the forms $H)$-$1)$ and $H)$-$2)$.
	\item Let $d=2p_1,$ where $p_1\equiv5\pmod8$ is a prime number. According to \cite[Theorem 2]{kuvcera1995parity} we have $N(\vep_{2p_1})=-1$ and by \cite[Theorem 2.1]{AzZektaous} we check that  $q(K)=2$ or $1$ according to whether one of the units $\{\vep_{p}\vep_{2p_1}\vep_{2pp_1}, \vep_{2pp_1}\}$  is a square in $K$ or not. Furthermore $h_2(K)=2$ exactly when $p_1\equiv5\pmod8, h_2(2pp_1)=4$ and none of the   units $\{\vep_{p}\vep_{2p_1}\vep_{2pp_1}, \vep_{2pp_1}\}$ is a square in $K$. This justifies the the first item in form $I)$. 
	
	\item Let $d=2p_1,$ where $p_1\equiv1\pmod8$   is a prime number such that $\left(\frac{2}{p_1}\right)_4\neq \left(\frac{p_1}{2}\right)_4$. According to \cite[Theorem 2]{kuvcera1995parity} we have $N(\vep_{2p_1})=1$ and by \cite[Theorem 2.1]{AzZektaous} we check that  $q(K)=2$ or $1$. according to whether $N(\vep_{2pp_1})=1$ or not. Furthermore $h_2(K)=2$ exactly when $p_1\equiv1\pmod8, h_2(2pp_1)=4$, $\left(\frac{2}{p_1}\right)_4\neq \left(\frac{p_1}{2}\right)_4$ and $N(\vep_{2pp_1})=-1$.  This justifies the the second item in form $I)$.

	\item Let $d=p_1p_2,$ where $p_1\equiv p_2\equiv1\pmod4$ distinct prime number such that $\left(\frac{p_1}{p_2}\right)=-1$ (cf. Lemma \ref{realQuadeven1New}). Then by \cite[Theorem 1]{kuvcera1995parity} we have $N(\vep_{p_1p_2})=-1$, thus using Wada's method we conclude that $q(K)=2$ or $1$ according to whether one of the units $\{\vep_{p}\vep_{p_1p_2}\vep_{pp_1p_2}, \vep_{pp_1p_2}\}$  is a square in $K$ or not. Hence, $h_2(K)=2$ exactly when $\left(\frac{p_1}{p_2}\right)=-1, h_2(pp_1p_2)=4$ and none of the above units is a square in $K$. This justifies the first item in forms $J)$. 
	\item Let $d=p_1p_2,$ where $p_1\equiv p_2\equiv1\pmod4$ distinct prime number such that  $\left(\frac{p_1}{p_2}\right)=1$ and $\left(\frac{p_1}{p_2}\right)_4\neq\left(\frac{p_2}{p_1}\right)_4$. Then by \cite[Theorem 1]{kuvcera1995parity} we have $N(\vep_{p_1p_2})=1$, moreover using Wada's method we conclude that $q(K)=1$ if and only if none of the units $\{\vep_{p_1p_2}\vep_{pp_1p_2},\vep_{p}\vep_{p_1p_2}\vep_{pp_1p_2}, \vep_{p}\vep_{pp_1p_2}\}$ is a square in $K$. Hence, $h_2(K)=2$ exactly when $\left(\frac{p_1}{p_2}\right)=1, \left(\frac{p_1}{p_2}\right)_4\neq\left(\frac{p_2}{p_1}\right)_4$, $h_2(pp_1p_2)=4$ and none of the above units is a square in $K$.  This justifies the second item in forms $J)$.  
\end{enumerate}This completes the proof of our result.
\end{proof}



\section{\bf {Biquadratic fields that are not QO-fields with $2$-class numbers equal  $2$}}
In this section, we investigate biquadratic fields $K$ that are not QO-fields.
For  an integer $d$, let  $\mathrm{sqf} (d)$ denote  its square-free part. Taking into account the discussions presented in the introduction (i.e., Section \ref{Sec1}), the following theorem provides a complete classification  of the fields $K$ that are not QO-fields with $2$-class numbers equal  $2$.

\begin{theorem}\label{classnoadqua}
Let $d_1$ and $d_2$ be two distinct positive  square-free integers such that  $h_2(d_1)= h_2(d_2)=h_2(d_3)=2$, where $d_3=\mathrm{sqf}(d_1d_2)$. Put $K=\QQ(\sqrt{d_1},\sqrt{d_2})$. Then $h_2(K)=2$ if and only if one of the following conditions holds:
\begin{enumerate}[$1)$]
	\item $K=\QQ(\sqrt{pq_1q_2},\sqrt{2q_1q_2})$, with $p\equiv-q_1\equiv-q_2\equiv1\pmod4$ are distinct prime numbers such that $p\equiv5\pmod8$, $\left(\frac{p}{q_1}\right)=-\left(\frac{p}{q_2}\right)=-1$ and one of the following conditions holds:
	\begin{enumerate}[$i)$]
		\item $q_1\equiv7\pmod8$ and $q_2\equiv3\pmod8$,
		\item $q_1\equiv q_2\equiv3 \pmod8$.
	\end{enumerate}
	
	\item $K=\QQ(\sqrt{p_1q},\sqrt{p_2q})$ or $\QQ(\sqrt{2p_1q},\sqrt{2p_2q})$, with $q\equiv-p_1\equiv-p_2\equiv3\pmod4$ are distinct prime numbers such that $\left(\frac{p_1}{p_2}\right)=-1$ and one of the following conditions holds:\begin{enumerate}[$i)$]
		\item $p_1\equiv1\pmod8, p_2\equiv5\pmod8$ and  $\left(\frac{p_1}{q}\right)=-1,$
		\item $p_1\equiv p_2\equiv5\pmod8$ and $\left(\frac{p_1}{q}\right)\cdot\left(\frac{p_2}{q}\right)=-1.$
	\end{enumerate} 
	\item $K=\QQ(\sqrt{p_1q_1q_2},\sqrt{p_2q_1q_2})$ with $q_1\equiv q_2\equiv-p_1\equiv-p_2\equiv3\pmod4$ are distinct prime numbers such that after a suitable permutation of indices we have:
	$$\left(\frac{p_1}{q_1}\right)=\left(\frac{p_1}{q_2}\right)=\left(\frac{p_2}{q_1}\right)\cdot\left(\frac{p_2}{q_2}\right)=\left(\frac{p_1}{p_2}\right)=-1.$$

	\item $K=\QQ(\sqrt{2p_1},\sqrt{2p_2}),$ with $p_1\equiv p_2\equiv1\pmod4$ are distinct prime numbers satisfying one of the following conditions:
	\begin{enumerate}[$i)$]
		\item $p_1\equiv p_2\equiv5\pmod8$, 	$\left(\frac{p_1}{p_2}\right)=-1$ and  $\vep_{2p_1}\vep_{2p_2}\vep_{p_1p_2}$ is not a square in $K$,
		\item $p_1\equiv p_2\equiv5\pmod8 $,  $\left(\frac{p_1}{p_2}\right)=1$ and  $\left(\frac{p_1}{p_2}\right)_4\neq \left(\frac{p_2}{p_1}\right)_4$,
		\item $p_1\equiv5\pmod8$, $p_2\equiv1\pmod8 $,   $\left(\frac{2}{p_2}\right)_4\neq \left(\frac{p_2}{2}\right)_4$\text{ and } $\left(\frac{p_1}{p_2}\right)=-1$.
	\end{enumerate}

	\item $K=\QQ(\sqrt{p_1p_2},\sqrt{p_2p_3})$ with $p_1\equiv p_2\equiv p_3\equiv1\pmod4$ be distinct prime numbers satisfying one of the following conditions:
	\begin{enumerate}[$i)$]
		\item $\left(\frac{p_1}{p_2}\right)=\left(\frac{p_1}{p_3}\right)=\left(\frac{p_2}{p_3}\right)=-1$ and $\vep_{p_1p_2}\vep_{p_2p_3}\vep_{p_1p_3}$ is not a square in $K$,
		\item $\left(\frac{p_1}{p_2}\right)=\left(\frac{p_2}{p_3}\right)=-1$, $\left(\frac{p_1}{p_3}\right)=1$\text{ and } $\left(\frac{p_1}{p_3}\right)_4\neq \left(\frac{p_3}{p_1}\right)_4$.
	\end{enumerate}

\end{enumerate}
\end{theorem}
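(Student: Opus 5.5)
Under the hypothesis $h_2(d_1)=h_2(d_2)=h_2(d_3)=2$, Kuroda's formula \eqref{equa1} gives $h_2(K)=2\,q(K)$. Hence $h_2(K)=2$ if and only if $q(K)=1$, and the whole proof reduces to computing the unit index. I would go through the rows of Table \ref{biquaNotQO} one family at a time. By the discussion in the introduction, these rows list every $K$ with the three quadratic $2$-class numbers equal to $2$. For each row the task is to decide whether some product $\vep_{d_1}^{a}\vep_{d_2}^{b}\vep_{d_3}^{c}$ with $(a,b,c)\in\{0,1\}^3\setminus\{0\}$ is a square in $K$. By Wada's algorithm (cf. Page \pageref{wadaalgo}), $q(K)=1$ exactly when none of them is.

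The first tool is the norm of each fundamental unit. By Ku\v{c}era's theorems (as used in Lemma \ref{realQuadeven1New}), the fields $\QQ(\sqrt{p_1p_2})$ with $\left(\frac{p_1}{p_2}\right)=-1$, or with $\left(\frac{p_1}{p_2}\right)=1$ and $\left(\frac{p_1}{p_2}\right)_4\neq\left(\frac{p_2}{p_1}\right)_4$, have units of norm $-1$. The same holds for $\QQ(\sqrt{2p})$ with $p\equiv5\pmod8$, or with $p\equiv 1\pmod 8$ and the quartic symbol condition. Units of norm $-1$ cannot appear with odd exponent in a square of a real field, since the square of a unit of $K$ has norm $+1$ down to each quadratic subfield. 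This immediately kills most candidates in rows 4) and 5).

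Where all three units have norm $-1$, only the triple product $\vep_{d_1}\vep_{d_2}\vep_{d_3}$ survives. This is why conditions 4)$i)$ and 5)$i)$ are stated as a non-square condition rather than being decided outright. In the cases where one unit has norm $+1$ (the quartic symbol cases), I would show that the surviving product has the wrong sign, or the wrong field of definition, to be a square. This should give $q(K)=1$ unconditionally in 4)$ii)$, $iii)$ and 5)$ii)$. The remaining rows of Table \ref{biquaNotQO} for these shapes I would show force $q(K)=2$.

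For the rows involving primes $q\equiv3\pmod4$, namely rows 1)--3), all the units $\vep_{pq},\vep_{2q_1q_2},\dots$ have norm $+1$. Here I would use the explicit form of $\sqrt{\vep_d}$ from the appendix tables (Tables \ref{tab7}, \ref{tab8}). Concretely, I would write $\vep_d=x+y\sqrt d$ and solve the systems $x\pm1=2m_1y_1^2$, $x\mp1=2m_2y_2^2$ (or the half-integer analogues with $\pm2$) via Lemma \ref{ZekTAo}, exactly as in the proof of Lemma \ref{unitspp1q1q2}. Legendre symbol contradictions leave a single admissible system for each unit, so that $\sqrt{\vep_{d_i}}=[\sqrt{m_1},\sqrt{m_2}]$ for an explicit splitting $d_i=m_1m_2$. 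A product $\vep_{d_1}^a\vep_{d_2}^b\vep_{d_3}^c$ is then a square in $K$ iff the corresponding square roots multiply to an element of $K$. That is, the classes of the $m$'s in $\QQ^*/\QQ^{*2}$ must combine into $\langle d_1,d_2\rangle$. Checking this square-class condition row by row yields precisely the Legendre conditions in items 1)--3). For instance, in item 2) the condition on $\left(\frac{p_1}{q}\right)\left(\frac{p_2}{q}\right)$ controls whether $\sqrt{\vep_{p_1q}}$ and $\sqrt{\vep_{p_2q}}$ have compatible splittings. The fields $\QQ(\sqrt{2p_1q},\sqrt{2p_2q})$ and $\QQ(\sqrt{p_1q},\sqrt{p_2q})$ would be handled in parallel, since their third subfield $\QQ(\sqrt{p_1p_2})$ is the same.

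The main obstacle is this bookkeeping in rows 1)--3). The splitting of $\sqrt{\vep_d}$ depends on residues mod $8$ and on several Legendre symbols, so one must be careful to cover every subcase of each row of Table \ref{biquaNotQO}. One must also justify rejecting the rows where a square root does lie in $K$, which gives $q(K)=2$ and hence $h_2(K)=4$. I would organize this as a table of possibilities for $(\sqrt{\vep_{d_1}},\sqrt{\vep_{d_2}},\sqrt{\vep_{d_3}})$, in the style of Table \ref{formofeta}, and then read off $q(K)$ from it.
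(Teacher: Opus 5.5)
Your reduction and strategy are the paper's. Kuroda's formula gives $h_2(K)=2q(K)$, and $q(K)$ is then computed row by row in Table~\ref{biquaNotQO} from Ku\v{c}era's norm results, a sign argument, and the shapes of $\sqrt{\vep_d}$ from Tables~\ref{tab7}--\ref{tab8} via Wada's algorithm. For items 2), 4), 5) this reproduces the paper's case analysis, and your criterion via $\left(\frac{p_1}{q}\right)\left(\frac{p_2}{q}\right)$ in item 2) is correct. Two side claims need repair.

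First, the constraint on norm-$(-1)$ units comes from total positivity. If $\vep_{d_1}^a\vep_{d_2}^b\vep_{d_3}^c$ is a square in $K$, the three nontrivial automorphisms force $N(\vep_{d_2})^bN(\vep_{d_3})^c=N(\vep_{d_1})^aN(\vep_{d_3})^c=N(\vep_{d_1})^aN(\vep_{d_2})^b=1$. This is what lets the triple product survive when all three norms are $-1$, so norm-$(-1)$ units can occur with odd exponent, contrary to your first sentence. Second, it is not true that all units in items 1)--3) have norm $+1$. The units $\vep_{2p}$ with $p\equiv5\pmod 8$ and $\vep_{p_1p_2}$ with $\left(\frac{p_1}{p_2}\right)=-1$ have norm $-1$, and they must be removed by this sign argument, not by a splitting.

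The genuine gap is your last step, the claim that the square-class check ``yields precisely the Legendre conditions in items 1)--3)''. Carried out, it does not. Take $p=5$, $q_1=3$, $q_2=7$, i.e.\ $K=\QQ(\sqrt{105},\sqrt{42})$, where $h_2(10)=h_2(42)=h_2(105)=2$. Here:
\begin{itemize}
\item $\vep_{10}=3+\sqrt{10}$ has norm $-1$;
\item $\sqrt{\vep_{42}}=\sqrt7+\sqrt6$ and $\sqrt{\vep_{105}}=\sqrt{21}+2\sqrt5$;
\item $\sqrt{\vep_{42}}\sqrt{\vep_{105}}=7\sqrt3+3\sqrt{14}+2\sqrt{35}+2\sqrt{30}\notin K$.
\end{itemize}
Hence $q(K)=1$ and $h_2(K)=2$, yet $\left(\frac{5}{3}\right)=\left(\frac{5}{7}\right)=-1$ puts $K$ outside item 1).

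Item 3) fails the same way. In the row $(p_1q_1q_2,p_2q_1q_2,p_1p_2)$, under that row's hypotheses, one finds $\sqrt{\vep_{p_iq_1q_2}}=[\sqrt{q_j},\sqrt{p_iq_k}]$ exactly when $\left(\frac{p_i}{q_j}\right)=1$. So if each $p_i$ has exactly one symbol $-1$, and these occur at different $q_j$, the product of the two square roots involves $\sqrt{q_1q_2}\notin K$. Then $q(K)=1$, a case not in item 3).

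The paper's proof has the same slip. Its lists of conditions for $q(K)=2$ (C1)--C3) in its case 2), C1)--C2) in its case 8)) correctly exclude these fields. But the complements it writes down as items 1) and 3) drop them. So the ``only if'' direction fails as stated; carried out carefully, your method proves a corrected list that contains these extra cases.
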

\begin{proof}
Let $K=\mathbb{Q}(\sqrt{d_1},\sqrt{d_2})$, with $d_1$ and $d_2$ distinct positive square-free integers such that 
$h_2(d_1)=h_2(d_2)=h_2(d_3)=2$, where $d_3:=\mathrm{sqf}(d_1d_2)$. 
Suppose also that $h_2(K)=2$. 
It follows that the integers $d_1$ and $d_2$ must appear among the  possible forms listed in Table \ref{biquaNotQO} presented in  the introduction. 
For the converse, we treat the cases independently as in what follows.
\begin{enumerate}[$1)$]
	\item Let $d_1=pq_1$ and $d_2=2q_1q_2$ where $p\equiv -q_1 \equiv -q_2 \equiv1\pmod4$ are distinct prime numbers such that   [$\left(\frac{p}{q_1}\right)=\left(\frac{p}{q_2}\right)=-1$ or $p\equiv5\pmod8$] and [$q_1\equiv 3\pmod 8$ or $q_2\equiv 3\pmod 8$]. Then  according to Table \ref{tab7} and Wada's method we have $q(K)\geq 2$. In fact we have :
	
	{\footnotesize	\begin{table}[H]
			\centering
			\small 
			\setlength{\tabcolsep}{6pt} 
			\renewcommand{\arraystretch}{1.6} 
			
			\begin{tabular}{p{4cm}|p{10cm}}
				\toprule
				\textbf{${\varepsilon}$} & \textbf{The form of $\sqrt{\varepsilon}$ } \\
				\midrule
				${\vep_{pq_1}}$ & $[\sqrt{p},\sqrt{q_1}]$, $[\sqrt{2p},\sqrt{2q_1}]$ or $[\sqrt{2},\sqrt{2pq_1}]$ \\
				\hline
				${\vep_{2q_1q_2}}$ & $[\sqrt{2},\sqrt{q_1q_2}]$, $[\sqrt{2q_1},\sqrt{q_2}]$ or $[\sqrt{2q_2},\sqrt{q_1}]$ \\
				\hline
				${\vep_{2pq_2}}$ & $[\sqrt{2p},\sqrt{q_2}]$, $[\sqrt{p},\sqrt{2q_2}]$ or $[\sqrt{2},\sqrt{pq_2}]$ \\
				\hline
				\hline
				Some square in $E_K$ & $\sqrt{\vep_{pq_1}\vep_{2q_1q_2}\vep_{2pq_2}}$, $\sqrt{\vep_{pq_1}\vep_{2pq_2}}$, $\sqrt{\vep_{2q_1q_2}\vep_{2pq_2}}$ or $\sqrt{\vep_{pq_1}\vep_{2q_1q_2}}$ \\
				\toprule
			\end{tabular}
			
			\caption{Square roots of units in $K$}
			\label{formsqrt1}
	\end{table}}
	Where the notation $[m,n]$ is explained in the introduction (cf. Notation \ref{squarenot}). Furthermore such field must me rejected.
	\item Let $d_1=pq_1q_2$ and $d_2=2q_1q_2$ where $p\equiv -q_1 \equiv -q_2 \equiv1\pmod4$ are distinct prime numbers such that   $p\equiv5\pmod8$, [$\left(\frac{p}{q_1}\right)=-1$ or $\left(\frac{p}{q_2}\right)=-1$] and [$q_1\equiv 3\pmod 8$ or $q_2\equiv 3\pmod 8$]. Using Tables \ref{tab7} and \ref{tab8} together and Wada's method we check that  $q(K)=1$ or $2$. More precisely, we have Table \ref{formsqrt2}. 
	
	{\footnotesize	\begin{center}
			\begin{table}[H]
				\begin{tabular}{p{3cm}|p{3cm}|c}
					\toprule
					Form of  $\sqrt{\vep_{pq_1q_2}}$ &  Form of $\sqrt{\vep_{2q_1q_2}}$ & $E_K$ \\
					\toprule
					$[\sqrt{q_2},\sqrt{pq_1}]$ 	&  $[\sqrt{q_2},\sqrt{2q_1}]$ & $\langle{-1,\vep_{2q_1q_2},\vep_{2p},\sqrt{\vep_{pq_1q_2}\vep_{2q_1q_2}}\rangle}$  \\
					\hline
					$[\sqrt{q_1},\sqrt{pq_2}]$ 	&  $[\sqrt{q_1},\sqrt{2q_2}]$ & $\langle{-1,\vep_{2q_1q_2},\vep_{2p},\sqrt{\vep_{pq_1q_2}\vep_{2q_1q_2}}\rangle}$  \\
					\hline
					$[\sqrt{p},\sqrt{q_1q_2}]$ 	&  $[\sqrt{2},\sqrt{q_1q_2}]$ & $\langle{-1,\vep_{2q_1q_2},\vep_{2p},\sqrt{\vep_{pq_1q_2}\vep_{2q_1q_2}}\rangle}$  \\
					\hline
					$Else$ 	&  $Else$ & $\langle{-1,\vep_{2q_1q_2},\vep_{2p},\vep_{pq_1q_2}\rangle}$  \\
					\bottomrule
				\end{tabular}
				\caption{The Forms of Square Roots in $E_K$}\label{formsqrt2}
			\end{table}
	\end{center}}
	It is worth noting that, by virtue of \cite[Theorem 2]{kuvcera1995parity}, we have $N_{\mathbb{Q}(\sqrt{2p})/\mathbb{Q}}(\varepsilon_{2p})=-1$. Moreover, as is evident from the tables cited above, the first three cases appearing in Table \ref{formsqrt2} are realised if and only if 
	\begin{enumerate}[$\mathrm{C}1)$]
		\item $p\equiv5\pmod8, q_1\equiv 3\pmod 8$ and  $q_2\equiv 7\pmod 8$ and  $\left(\frac{p} {q_1}\right)=-\left(\frac{p}{q_2}\right)=-1$,
		\item $p\equiv5\pmod8, q_1\equiv 7\pmod 8$ and  $q_2\equiv 3\pmod 8$ and  $\left(\frac{p} {q_1}\right)=-\left(\frac{p}{q_2}\right)=1$,
		\item $p\equiv5\pmod8, q_1\equiv q_2\equiv 3\pmod 8$  and  $\left(\frac{p} {q_1}\right)=\left(\frac{p}{q_2}\right)=-1$.
	\end{enumerate}
	Which gives the exact cases when $q(K)=2$ using Wada's method. Consequently, by negating these conditions and then intersecting the result with the hypotheses of this sub-case, we derive the precise conditions under which $q(K)=1$, thereby justifying case 1) of the theorem.
	
	\item Let $d_1=pq_1q_2$ and $d_2=2q_1q_2$ where $p\equiv1\pmod8$, $\left(\frac{2}{p}\right)_4\neq \left(\frac{p}{2}\right)_4$,  $[\left(\frac{p}{q_1}\right)=-1$ or $\left(\frac{p}{q_2}\right)=-1]$ \text{ and } $[q_1\equiv 3\pmod 8$ \text{ or }$q_2\equiv 3\pmod 8]$. According to  \cite[Theorem 2]{kuvcera1995parity}, we have $N_{\mathbb{Q}(\sqrt{2p})/\mathbb{Q}}(\varepsilon_{2p})=1$, then $\sqrt{\vep_{2p}}=y_1\sqrt{2}+y_2\sqrt{p}$, for some rational numbers $y_1$ and $y_2$, moreover using Tables \ref{tab7} and \ref{tab8} in the above conditions we  check that a least one of the following square roots $\sqrt{\vep_{2p}\vep_{2q_1q_2}}$, $\sqrt{\vep_{2p}\vep_{pq_1q_2}}$, $\sqrt{\vep_{2p}\vep_{2q_1q_2}\vep_{pq_1q_2}}$ or $\sqrt{\vep_{2q_1q_2}\vep_{pq_1q_2}}$ belongs to $E_K$, more precisely we have 
	
	{\footnotesize	\begin{center}
			\begin{table}[H]
				\begin{tabular}{p{3cm}|p{3cm}|c}
					\toprule
					Form of  $\sqrt{\vep_{pq_1q_2}}$ &  Form of $\sqrt{\vep_{2q_1q_2}}$ & Squares in $E_K$ \\
					\toprule
					$[\sqrt{q_2},\sqrt{pq_1}]$ 	&  $[\sqrt{q_2},\sqrt{2q_1}]$ & $\sqrt{\vep_{pq_1q_2}\vep_{2q_1q_2}}, \sqrt{\vep_{2p}\vep_{2q_1q_2}}$  \\
					\cline{1-2}
					$[\sqrt{q_1},\sqrt{pq_2}]$ 	&  $[\sqrt{q_1},\sqrt{2q_2}]$ & $\sqrt{\vep_{2p}\vep_{pq_1q_2}},\sqrt{\vep_{2p}\vep_{pq_1q_2}\vep_{2q_1q_2}} $  \\
					\cline{1-2}
					$[\sqrt{p},\sqrt{q_1q_2}]$ 	&  $[\sqrt{2},\sqrt{q_1q_2}]$ &   \\
					
					\bottomrule
				\end{tabular}
				\caption{The Forms of Square Roots in $E_K$}\label{formsqrt3}
			\end{table}
	\end{center}}

	Therefore $q(K)\geq 2$, and such case must be rejected. 
	
	\item[4)-5)]  Let $(d_1,d_2)=(p_1q,p_2q)$ or $(2p_1q,p_1p_2)$ where $p_1\equiv p_2 \equiv -q \equiv1\pmod4$ are distinct prime numbers such that  [$\left(\frac{p_1}{q}\right)=-1$ or $p_1\equiv 5\pmod 8$], [$\left(\frac{p_2}{q}\right)=-1$ or $p_2\equiv 5\pmod 8$] \text{ and } $\left(\frac{p_1}{p_2}\right)=-1$. Recalling \cite[Theorem 1]{kuvcera1995parity}, we have $N_{\mathbb{Q}(\sqrt{p_1p_2})/\mathbb{Q}}(\varepsilon_{p_1p_2})=-1$. With the aid of Table \ref{tab7} and Wada's method, we then conclude that $q(K)$ is either $2$ or $1$, the distinction resting on the fulfilment or otherwise of one of the conditions set forth below:
 	\begin{enumerate}[C1)]
		\item $p_1\equiv p_2\equiv1\pmod8$ and $\left(\frac{p_1}{q}\right)=\left(\frac{p_2}{q}\right)=-1$,
		\item $p_1\equiv p_2\equiv5\pmod8$ and $\left(\frac{p_1}{q}\right)=\left(\frac{p_2}{q}\right)$.
	\end{enumerate}
	An entirely analogous procedure to that of the previous paragraph furnishes the conditions that establish cases $2)$--$i)$ and $2)$--$ii)$.
	\item[6)]  Let $d_1=p_1q$ and $d_2=p_2q$ where [$\left(\frac{p_1}{q}\right)=-1$ or $p_1\equiv 5\pmod 8$], [$\left(\frac{p_2}{q}\right)=-1$ or $p_2\equiv 5\pmod 8$],  $\left(\frac{p_1}{p_2}\right)=1$\text{ and } $\left(\frac{p_1}{p_2}\right)_4\neq \left(\frac{p_2}{p_1}\right)_4$. According to  \cite[Theorem 1]{kuvcera1995parity}, we have $N_{\mathbb{Q}(\sqrt{p_1p_2})/\mathbb{Q}}(\varepsilon_{p_1p_2})=1$, then $\sqrt{\vep_{p_1p_2}}=y_1\sqrt{p_1}+y_2\sqrt{p_2},$ for some rational numbers $y_1$ and $y_2$. Combining this with  Table \ref{tab7} in this case, we check that $q(K)\geq2$, in fact at least one of the following square roots $\sqrt{\vep_{p_1p_2}\vep_{p_1q}}$, $\sqrt{\vep_{p_1p_2}\vep_{p_2q}}$, $\sqrt{\vep_{p_1q}\vep_{p_2q}}$ or $\sqrt{\vep_{p_1q}\vep_{p_2q}\vep_{p_1p_2}}$ is in $E_K$. This such case must be rejected.
	\item[7)] Let  $d_1=2p_1q$ and $d_2=2p_2q$ where   [$\left(\frac{p_1}{q}\right)=-1$ or $p_1\equiv 5\pmod 8$], [$\left(\frac{p_2}{q}\right)=-1$ or $p_2\equiv 5\pmod 8$],  $\left(\frac{p_1}{p_2}\right)=1$\text{ and } $\left(\frac{p_1}{p_2}\right)_4\neq \left(\frac{p_2}{p_1}\right)_4$. Using the same argument as in the previous case one can conclude that $q(K)=2$. In fact $E_K=\langle{-1,\vep_{2p_1q},\vep_{2p_2q},\eta \rangle}$ with $\eta=\sqrt{\vep_{p_1p_2}\vep_{2p_1q}}, \sqrt{\vep_{p_1p_2}\vep_{2p_2q}}, \sqrt{\vep_{2p_1q}\vep_{2p_2q}} \;\text{or}\; \sqrt{\vep_{p_1p_2}\vep_{2p_1q}\vep_{2p_2q}}$. Thus such case must be rejected.
	
	\item[8)] Let $d_1=p_1q_1q_2$ and $d_2=p_2q_1q_2$ where $p_1\equiv p_2\equiv -q_1 \equiv -q_2 \equiv1\pmod4$ are distinct prime numbers such that   [$\left(\frac{p_1}{q_1}\right)=-1$ or $\left(\frac{p_1}{q_2}\right)=-1$], [$\left(\frac{p_2}{q_1}\right)=-1$ or $\left(\frac{p_2}{q_2}\right)=-1$] \text{and} $\left(\frac{p_1}{p_2}\right)=-1$. According to  \cite[Theorem 1]{kuvcera1995parity} we have  $N_{\QQ(\sqrt{p_1p_2})/\QQ}(\vep_{p_1p_2})=-1$, it follows that the unit $\sqrt{\vep_{p_1q_1q_2}\vep_{p_2q_1q_2}}$ is the only square candidate belonging to $E_K$, because the other two units, $\sqrt{\vep_{p_1q_2q_2}}$ and $\sqrt{\vep_{p_2q_2q_2}}$, take possible forms that do not belong to $E_K$ (cf. Table \ref{tab8}). By using Table \ref{tab8} and Wada's method we get $q(K)=2$ exactly in one of the following cases:\begin{enumerate}[$\mathrm{C}1)$]
		\item $\left(\frac{p_1}{q_1}\right)=\left(\frac{p_1}{q_2}\right)=\left(\frac{p_2}{q_1}\right)=\left(\frac{p_2}{q_2}\right)=-1,$ 
		\item $\left(\frac{p_1}{q_2}\right)=-\left(\frac{p_1}{q_1}\right)=-1$ and $\left(\frac{p_2}{q_2}\right)=-\left(\frac{p_2}{q_1}\right)=-1.$
	\end{enumerate} 
	Therefore in this sub-case we get   $q(K)=1$ if and only if 
	
	$$\left(\frac{p_1}{q_1}\right)=\left(\frac{p_1}{q_2}\right)=\left(\frac{p_2}{q_1}\right)\cdot\left(\frac{p_2}{q_2}\right)=\left(\frac{p_1}{p_2}\right)=-1.$$
	Which justifies the third case.

	\item[9)] Let $d_1=p_1q_1q_2$ and $d_2=p_2q_1q_2$ where [$\left(\frac{p_1}{q_1}\right)=-1$ or $\left(\frac{p_1}{q_2}\right)=-1$], [$\left(\frac{p_2}{q_1}\right)=-1$ or $\left(\frac{p_2}{q_2}\right)=-1$],  $\left(\frac{p_1}{p_2}\right)=1$ \text{and} $\left(\frac{p_1}{p_2}\right)_4\neq \left(\frac{p_2}{p_1}\right)_4$. As $N_{\mathbb{Q}(\sqrt{p_1p_2})/\mathbb{Q}}(\varepsilon_{p_1p_2})=1$ (cf. \cite[Theorem 1]{kuvcera1995parity}), Then by Table \ref{tab8} we conclude that $q(K)=2$, more precisely we have $E_K=\langle{-1,\vep_{p_1q_1q_2},\vep_{p_2q_1q_2},\eta \rangle}$ with $\eta=\sqrt{\vep_{p_1p_2}\vep_{p_1q_1q_2}}, \sqrt{\vep_{p_1p_2}\vep_{p_2q_1q_2}}, \sqrt{\vep_{p_1q_1q_2}\vep_{p_2q_1q_2}} \;\text{or}\; \sqrt{\vep_{p_1p_2}\vep_{p_1q_1q_2}\vep_{p_2q_1q_2}}$. Thus such case must be rejected.

	\item[10)] If $d_1=2p_1$ and $d_2=2p_2$ for distinct primes $p_1\equiv p_2\equiv 5 \pmod 8$ with $\left(\frac{p_1}{p_2}\right)=-1$, then \cite[Theorems 1 and 2]{kuvcera1995parity} gives that all three fundamental units of the quadratic subfields of $K$ have norm $-1$. Wada's method then implies that $q(K)=2$ or $1$ according to whether  $\sqrt{\varepsilon_{2p_1}\varepsilon_{2p_2}\varepsilon_{p_1p_2}}\in K$ or not.

	\item[11)] If $d_1=2p_1$ and $d_2=2p_2$ for distinct primes  with $p_1\equiv p_2\equiv5\pmod8 $,  $\left(\frac{p_1}{p_2}\right)=1$ \text{ and } $\left(\frac{p_1}{p_2}\right)_4\neq \left(\frac{p_2}{p_1}\right)_4$. By \cite[Theorems 1 and 2]{kuvcera1995parity} we have $N(\vep_{2p_1})=N(\vep_{2p_1})=-N(\vep_{p_1p_2})=-1$, then $q(K)=1$, in fact the only square root candidate to be in $E_K$ is $\sqrt{\vep_{p_1p_2}}$, but it takes the form $[\sqrt{p_1}, \sqrt{p_2}]$ which is not in $K.$

	\item[12)] If $d_1=2p_1$ and $d_2=2p_2$ for distinct primes  with  $p_1\equiv5\pmod8$, $p_2\equiv1\pmod8 $,   $\left(\frac{2}{p_2}\right)_4\neq \left(\frac{p_2}{2}\right)_4$\text{ and } $\left(\frac{p_1}{p_2}\right)=-1$.  According to o \cite[Theorems 1 and 2]{kuvcera1995parity} we have $N(\vep_{2p_1})=N(\vep_{p_1p_2})=-N(\vep_{2p_2})=-1$, thus $q(K)=1$ as $\sqrt{\vep_{2p_2}}\notin K$.
	\item[13)] If $d_1=2p_1$ and $d_2=2p_2$ for distinct primes  with   $p_1\equiv5\pmod8$, $p_2\equiv1\pmod8 $, $\left(\frac{2}{p_2}\right)_4\neq \left(\frac{p_2}{2}\right)_4$, $\left(\frac{p_1}{p_2}\right)=1$  \text{ and }  $\left(\frac{p_1}{p_2}\right)_4\neq \left(\frac{p_2}{p_1}\right)_4$. Then According to o \cite[Theorems 1 and 2]{kuvcera1995parity} we have $N(\vep_{2p_2})=N(\vep_{p_1p_2})=-N(\vep_{2p_1})=1$, moreover $\sqrt{\vep_{2p_2}}=[\sqrt{2},\sqrt{p_2}]$ and $\sqrt{\vep_{p_1p_2}}=[\sqrt{p_1},\sqrt{p_2}]$, then $\sqrt{\vep_{2p_2}\vep_{p_1p_2}}\in K$ and $q(K)=2$.
	\item[14)] If $d_1=2p_1$ and $d_2=2p_2$ for distinct primes  with $p_1\equiv p_2\equiv1\pmod8 $, $\left(\frac{2}{p_1}\right)_4\neq \left(\frac{p_1}{2}\right)_4$, $\left(\frac{2}{p_2}\right)_4\neq \left(\frac{p_2}{2}\right)_4$  \text{ and } $\left(\frac{p_1}{p_2}\right)=-1$.  According to  \cite[Theorems 1 and 2]{kuvcera1995parity} we have $N(\vep_{2p_1})=N(\vep_{2p_2})=-N(\vep_{2p_2})=1$,  and $\sqrt{\vep_{2p_1}}=[\sqrt{2},\sqrt{p_1}]$ and $\sqrt{\vep_{2p_2}}=[\sqrt{2},\sqrt{p_2}]$, thus $\sqrt{\vep_{2p_2}\vep_{2p_2}}\in K$ and $q(K)=2$.
	\item[15)] If $d_1=2p_1$ and $d_2=2p_2$ for distinct primes  with $p_1\equiv p_2\equiv1\pmod8 $, $\left(\frac{2}{p_1}\right)_4\neq \left(\frac{p_1}{2}\right)_4$, $\left(\frac{2}{p_2}\right)_4\neq \left(\frac{p_2}{2}\right)_4$,   $\left(\frac{p_1}{p_2}\right)=1$ \text{ and }    $\left(\frac{p_1}{p_2}\right)_4\neq \left(\frac{p_2}{p_1}\right)_4$. Then according to \cite[Theorems 1 and 2]{kuvcera1995parity} we have $N(\vep_{2p_1})=N(\vep_{2p_2})=N(\vep_{2p_2})=1$,  and $\sqrt{\vep_{2p_1}}=[\sqrt{2},\sqrt{p_1}]$,  $\sqrt{\vep_{2p_2}}=[\sqrt{2},\sqrt{p_2}]$ and $\sqrt{\vep_{p_1p_2}}=[\sqrt{p_1},\sqrt{p_2}]$, thus $\sqrt{\vep_{2p_2}\vep_{2p_2}}$ and $\sqrt{\vep_{2p_2}\vep_{2p_2}}\in K$ and $q(K)=4.$

	\item[16)] If $d_1=p_1p_2$ and $d_2=p_2p_3$ for some distinct prime numbers $p_i\equiv1\pmod4$ and $\left(\frac{p_1}{p_2}\right)=\left(\frac{p_2}{p_3}\right)=\left(\frac{p_1}{p_3}\right)=-1$, this case is similar to the previous, hence $q(K)=2$ or $1$ according to whether $\sqrt{\vep_{p_1p_2}\vep_{p_2p_3}\vep_{p_1p_3}}$ is in $K$ or not.
	
	\item[17)] If $d_1=p_1p_2$ and $d_2=p_2p_3$ for some distinct prime numbers $p_i\equiv1\pmod4$ such that $\left(\frac{p_1}{p_2}\right)=\left(\frac{p_2}{p_3}\right)=-1$, $\left(\frac{p_1}{p_3}\right)=1$\text{ and } $\left(\frac{p_1}{p_3}\right)_4\neq \left(\frac{p_3}{p_1}\right)_4$. Then according to  \cite[Theorems 1 ]{kuvcera1995parity} we have $N(\vep_{p_1p_2})=N(\vep_{p_2p_3})=-N(\vep_{p_1p_3})=-1$, then $q(K)=1$ since the only square root candidate to be in $E_K$ is $\sqrt{\vep_{p_1p_3}}=[\sqrt{p_1},\sqrt{p_3}]$, which is not.
	
	\item[18)] If $d_1=p_1p_2$ and $d_2=p_2p_3$ for some distinct prime numbers $p_i\equiv1\pmod4$ such that $\left(\frac{p_1}{p_2}\right)=-1$, $\left(\frac{p_2}{p_3}\right)=\left(\frac{p_1}{p_3}\right)=1$,  $\left(\frac{p_1}{p_3}\right)_4\neq \left(\frac{p_3}{p_1}\right)_4$ \text{ and } $\left(\frac{p_2}{p_3}\right)_4\neq \left(\frac{p_3}{p_2}\right)_4$. Then  according to  \cite[Theorems 1 ]{kuvcera1995parity} we have $N(\vep_{p_2p_3})=N(\vep_{p_1p_3})=-N(\vep_{p_1p_2})=1$, thus $\sqrt{\vep_{p_2p_3}}=[\sqrt{p_2},\sqrt{p_3}]$ and   $\sqrt{\vep_{p_1p_3}}=[\sqrt{p_1},\sqrt{p_3}]$. Therefore $\sqrt{\vep_{p_2p_3}\vep_{p_1p_3}}\in K$ and $q(K)=2$.
	
	\item[19)] If $d_1=p_1p_2$ and $d_2=p_2p_3$ for some distinct prime numbers $p_i\equiv1\pmod4$ such that $\left(\frac{p_1}{p_2}\right)=\left(\frac{p_2}{p_3}\right)=\left(\frac{p_1}{p_3}\right)=1$,  $\left(\frac{p_1}{p_3}\right)_4\neq \left(\frac{p_3}{p_1}\right)_4$, $\left(\frac{p_2}{p_3}\right)_4\neq \left(\frac{p_3}{p_2}\right)_4$  \text{ and } $\left(\frac{p_1}{p_2}\right)_4\neq \left(\frac{p_2}{p_1}\right)_4$.  Then  according to  \cite[Theorems 1 ]{kuvcera1995parity} we have $N(\vep_{p_2p_3})=N(\vep_{p_1p_3})=N(\vep_{p_1p_2})=1$, thus $\sqrt{\vep_{p_2p_3}}=[\sqrt{p_2},\sqrt{p_3}]$,   $\sqrt{\vep_{p_1p_3}}=[\sqrt{p_1},\sqrt{p_3}]$ and $\sqrt{\vep_{p_1p_2}}=[\sqrt{p_1},\sqrt{p_2}]$ . Therefore $\sqrt{\vep_{p_2p_3}\vep_{p_1p_3}}$ and $\sqrt{\vep_{p_2p_3}\vep_{p_1p_2}}\in K$ and $q(K)=4$.
	
\end{enumerate}
Plugging all the above investigations together, we complete the proof of our   theorem.
\end{proof}

\medskip	

The results established in this section will be useful in the classification of real multiquadratic fields with odd class numbers, which will be presented in the next section.

\section{\bf Real Multiquadratic Number Fields with Odd Class Number}

Let $D(n):=\QQ(\sqrt{\ell_1},\sqrt{\ell_2},\cdots,\sqrt{\ell_n})$ (or more simply $K$),  where $n\geq 1$ is an integer and all the $\ell_i$ are positive square-free integers,   be a multiquadratic field of $2^n$. In this section, we give the complete classification of real multiquadratic fields $D(n)$ whose class number is odd. Let us   recall the following key step    result, due to Fröhlich.

\begin{theorem} [{Fr\"{o}hlich \cite[Theorem 5.6]{frohlich}}]\label{FrohlichResults} Let $L$ be a real abelian number field of $2$-power degree. The class number of $L$ is even whenever the number of finite rational primes that ramify in $L$ is greater than or equal to $5$.
\end{theorem}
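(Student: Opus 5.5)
The plan is to argue by contradiction with genus theory over $\QQ$, then with a Chevalley ambiguous class computation for one quadratic step. Let $L$ be real abelian with $G=\mathrm{Gal}(L/\QQ)$ a $2$-group, and let $p_1,\dots,p_t$ ($t\geq 5$) be the ramified primes. Suppose, for contradiction, that $h(L)$ is odd.

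\emph{Step 1 (genus field).} For each $p_i$ let $I_i\subset G$ be the inertia group and $e_i=|I_i|$. By Kronecker--Weber, $L\subset\QQ(\zeta_m)$, so each $I_i$ is a quotient of the $2$-part of $\ZZ_{p_i}^\times$. Hence $I_i$ is cyclic for odd $p_i$, and of $2$-rank at most $2$ for $p_i=2$. Since $\QQ$ has no unramified extensions, the $I_i$ generate $G$.

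Let $N$ be the compositum of the fields $K_i$, where $K_i\subset\QQ(\zeta_{p_i^\infty})$ is the $p_i$-ramified field with Galois group $I_i$. Then $L\subset N$, and $N/L$ is unramified at all finite primes. The maximal real subfield $N^+$ is therefore an abelian extension of $L$ unramified everywhere, since $L$ is real. Because $h(L)$ is odd, $N^+=L$. As $[N:N^+]\le 2$, this forces $\prod e_i\le 2|G|$. So $G$ is the quotient of $\prod I_i$ by a subgroup of order at most $2$, namely the one generated by complex conjugation. In particular $G$ has $2$-rank at least $t-1\ge 4$.

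\emph{Step 2 (reduction to a real multiquadratic field).} With $L=N^+$ explicit, pass to a convenient subfield. I would first prove the standard fact that for a Galois $2$-extension $L/F$ with $h(L)$ odd, $h(F)$ is odd as well. The argument runs through the norm map and the fact that $\mathrm{Cl}_2(L)=0$ forces the $2$-part of the genus field of $L/F$ to be $L$ itself. This is where I expect the main care to be needed. Applying it, $L$ contains the real multiquadratic field $M=\QQ(\sqrt{p_i^*p_j^*},\ldots)$ built from the quadratic subfields of the $K_i$, which is ramified at all $t$ primes. So it suffices to show that $h(M)$ is even.

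\emph{Step 3 (Chevalley count).} Choose a quadratic subextension $M/F$ with $F$ real of degree $2^{n-1}$ such that at least $t-1\ge 4$ primes of $F$ ramify in $M$. For example, take $M=F(\sqrt{\ell})$ with $\ell$ a product of two of the $p_i^*$, so that every prime of $F$ above the remaining primes splits off suitably. Chevalley's formula gives
\[
|\mathrm{Cl}(M)^{\mathrm{Gal}(M/F)}|=\frac{h(F)\,2^{s-1}}{[E_F:E_F\cap N_{M/F}M^\times]},
\]
where $s$ counts ramified places. The denominator is at most $2^{\mathrm{rank}\,E_F+1}$, but only the units that are local norms everywhere count. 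Using the Hilbert symbols at the $p_i$, the $s-1$ ramified local conditions impose independent constraints on the image of $E_F/E_F^2$, and that image is generated by $-1$ and $\sqrt{\cdot}$-type units of the quadratic subfields in Wada's description. The hardest step, and the one I would attack first, is to show that once $t\ge5$ the numerator $2^{s-1}$ strictly exceeds the norm-residue index. I would reduce this to a rank count over $\mathbb{F}_2$ of the symbol matrix $\bigl((\varepsilon,\ell)_{\mathfrak p}\bigr)$, arguing that it has rank at most $s-2$ because of the product formula and the reality of $F$. Then $2\mid |\mathrm{Cl}(M)^{\mathrm{Gal}}|$, contradicting the oddness of $h(M)$ from Step 2.
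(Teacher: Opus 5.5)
The paper does not prove this statement; it quotes it from Fr\"ohlich. So your outline has to be judged on its own. Step 1 is standard genus theory and is fine, but Step 2 rests on a false lemma.

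The extension $\QQ(\sqrt3,\sqrt5)/\QQ(\sqrt{15})$ is unramified and quadratic, with $h(\QQ(\sqrt3,\sqrt5))=1$ but $h(15)=2$. Lemmas \ref{realQuadeven1New} and \ref{realBiquad} give whole families of such pairs. What is true is $\mathrm{Cl}(F)/N_{L/F}\mathrm{Cl}(L)\cong\mathrm{Gal}((L\cap H_F)/F)$, where $H_F$ is the Hilbert class field. So you need $L/M$ to have no nontrivial everywhere-unramified subextension. This does hold for your pair: the inertia groups of $\mathrm{Gal}(L/M)$ contain the images of the $2I_i$, and these generate $\mathrm{Gal}(L/M)$. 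But it must be checked; oddness of $h(L)$ alone does not give it.

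Also, $M$ need not be ramified at all $t$ primes. Suppose exactly one prime discriminant is negative, e.g.\ one $q\equiv 3\pmod 4$ and all other $p_j\equiv1\pmod4$. Then the real part of $\QQ(\sqrt{p_1^*},\dots,\sqrt{p_t^*})$ is unramified at $q$. You are left with four ramified primes, and that number alone no longer forces an even class number (Theorem \ref{duaqdriwithodd}).

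The decisive gap is Step 3. The product formula gives exactly one relation among the symbols $(\varepsilon,\ell)_{\mathfrak p}$ at the $s$ ramified primes. That relation only yields $[E_F:E_F\cap N_{M/F}M^\times]\le 2^{s-1}$, i.e.\ $|\mathrm{Cl}(M)^{\mathrm{Gal}(M/F)}|\ge h(F)$. The reality of $F$ and $M$ adds no second relation; it only means that no infinite place contributes a symbol.

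If your mechanism gave rank $\le s-2$, then every real quadratic extension $M$ of a real field with $s\ge2$ ramified primes would have even class number. Already $\QQ(\sqrt{q_1q_2})/\QQ$ refutes this: $(-1,q_1q_2)_{q_1}=-1$, the rank is $1=s-1$, and $h(q_1q_2)$ is odd.

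Whether $E_F$ realises every admissible sign pattern is exactly the arithmetic content of the theorem. For $[F:\QQ]=2^{n-1}$ the space $E_F/E_F^2$ has $\mathbb{F}_2$-dimension $2^{n-1}$, and $E_F$ is not explicitly known (Wada's description leaves the square roots undetermined). Nothing in your outline uses $t\ge5$ to bound the image of the symbol map.

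The missing idea is to measure the obstruction over a base with a tiny unit group. Fr\"ohlich works relative to $\QQ$, with genus and central class fields. The Schur multiplier of $\mathrm{Gal}(L/\QQ)$ is large, since by your Step 1 the group has $2$-rank $\ge t-1$. It is weighed against the contributions of the decomposition groups and of the units $\pm1$ of $\QQ$, and this is where a bound depending only on $t$ becomes available.
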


\medskip
Thus, the class number of $D(n) $ is even for every $n\geq 5$. Therefore, to complete  the classification, it suffices to consider real multiquadratic fields  $D(n)$ with $n\leq 4$.
\medskip

For $n=1$,	let us recall the following result of Conner and Hurrelbrink that classifies real  quadratic fields with odd class numbers.
\medskip
\begin{lemma}[\cite{{conner88}}, Corollary 18.4] Let $K$ be a real quadratic field. The class number of $K$ is odd   if and only if it takes one of the following forms:
\begin{enumerate}[$    1)$]
	\item $K=\QQ(\sqrt{2})$, $\QQ(\sqrt{p})$ where $p$  is a   prime number congruent to $1\pmod 4$. 
	\item $K=\QQ(\sqrt{q})$, $\QQ(\sqrt{2q})$ or $\QQ(\sqrt{q_1q_2})$  where  $q$, $q_1$, $q_2$ are prime  numbers congruent to $3\pmod 4$.
\end{enumerate}
\end{lemma}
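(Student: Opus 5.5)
The statement just recalled is Conner and Hurrelbrink's classification (Lemma \ref{realQuadodd}). If one wanted a self-contained argument rather than the citation, I would use genus theory together with Rédei-type information on the $4$-rank.

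Write $F=\QQ(\sqrt{d})$ and let $t$ be the number of primes ramified in $F$. Genus theory in the narrow sense gives $\mathrm{rank}_2 \mathrm{Cl}^+(F)=t-1$. The ordinary class group is the quotient of $\mathrm{Cl}^+(F)$ by a subgroup of order $1$ or $2$; it has order $2$ exactly when $N(\varepsilon_d)=1$.

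The first step is a bound on $t$. If $h(F)$ is odd, then $|\mathrm{Cl}^+(F)|_2\le 2$, so $t\le 2$. Moreover $t=2$ forces $h^+_2=2$ and $N(\varepsilon_d)=1$.

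The second step treats $t=1$. Here $d=2$, $d=p$, or $d=q$, with $p\equiv 1\pmod 4$ and $q\equiv 3\pmod 4$ prime. In each case the narrow $2$-class group is trivial, so $h(F)$ is odd. This gives the fields $\QQ(\sqrt{2})$, $\QQ(\sqrt{p})$ and $\QQ(\sqrt{q})$.

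The third step treats $t=2$. Then $d\in\{2p,\,2q,\,p_1p_2,\,pq,\,q_1q_2\}$, where $q_1,q_2$ are primes congruent to $3\pmod 4$. We need $N(\varepsilon_d)=1$ and, in addition, that the narrow $2$-class group is cyclic of order exactly $2$, i.e. its $4$-rank vanishes.
\begin{enumerate}[$\bullet$]
\item If $d$ is divisible by some $q\equiv 3\pmod4$, i.e. $d=2q$, $pq$ or $q_1q_2$, then $-1$ is not a norm and $N(\varepsilon_d)=1$ automatically.
\item For $d=2q$ and $d=q_1q_2$, Rédei's criterion shows that the $4$-rank of the narrow group is $0$. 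So $h^+_2=2$ and hence $h_2=1$.
\item For $d=pq$, the discriminant $4pq$ or $pq$ admits the genus character attached to $p$. Rédei's matrix then shows that $h^+_2\ge 4$, or that $h_2=2$ when $(p/q)=-1$. In every sub-case $h$ is even, consistent with Lemma \ref{realQuadeven1New}.
\item For $d=2p$ and $d=p_1p_2$, all ramified primes are $2$ or $\equiv1\pmod 4$, so the ambiguous ideals are principal in the narrow sense only appropriately, and $h$ is even. Concretely: if $N(\varepsilon_d)=-1$, then $h_2=h^+_2\ge 2$. If $N(\varepsilon_d)=1$, then $-1$ is a norm, so the Rédei $4$-rank is positive and $h^+_2\ge 4$, giving $h_2\ge 2$.
\end{enumerate}

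The main obstacle is this last sub-step, for $d=2p$ and $d=p_1p_2$. I would handle it by exhibiting the ambiguous class of a prime above $p$ (resp. $p_1$). One shows it is not principal in the ordinary sense unless $N(\varepsilon_d)=-1$; when $N(\varepsilon_d)=-1$, the narrow and ordinary class groups coincide and the group has even order. This uses the standard fact that exactly one of $p$, $2$, $2p$ (resp. $p_1$, $p_2$, $p_1p_2$) is a norm of a unit times a square, as in the analysis behind Lemma \ref{ZekTAo}. In the paper itself none of this is needed, since the result is simply quoted from \cite[Corollary 18.4]{conner88}.
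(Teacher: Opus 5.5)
The paper gives no proof of this lemma (it is quoted from Conner--Hurrelbrink, Cor.~18.4), so your genus-theoretic argument would be an independent one. The strategy is sound, but as written it has bookkeeping errors, and the hardest sub-step is not correctly justified.

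\textbf{Ramification count.} Your count of ramified primes is wrong in two places.
\begin{itemize}
\item For $d=q\equiv 3\pmod 4$ the discriminant is $4q$, so $t=2$, not $1$. The narrow $2$-class group is then cyclic of order $2$, not trivial: for $\QQ(\sqrt3)$ one has $h=1$ and $N(2+\sqrt3)=1$, so $h^+=2$. Your Step~2 reason for $h(\QQ(\sqrt q))$ being odd is therefore false. This case belongs with $2q$ and $q_1q_2$. There the only nontrivial splitting $4q=(-4)(-q)$ fails R\'edei's condition because $\left(\frac{-4}{q}\right)=-1$, so $h_2^+=2$, and $N(\varepsilon_q)=1$ gives $h_2=1$.
\item Conversely, $d=pq\equiv 3\pmod 4$ has discriminant $4pq$, hence $t=3$. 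It is already excluded by Step~1 and does not belong in your $t=2$ list. That list should read $\{q,2q,q_1q_2,2p,p_1p_2\}$.
\end{itemize}

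\textbf{The case $d\in\{2p,p_1p_2\}$.} Your sentence ``if $N(\varepsilon_d)=1$ then $-1$ is a norm, so the $4$-rank is positive'' misplaces the logic. For these $d$, $-1\in N(F^\times)$ holds unconditionally, since $d$ is a sum of two squares. By itself this forces nothing: for $d=10$, $-1=N(3+\sqrt{10})$, yet $\mathrm{Cl}^+\simeq\ZZ/2$.

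The correct mechanism has two ingredients:
\begin{itemize}
\item When $N(\varepsilon_d)=1$, the narrow class of $(\sqrt d)$ is nontrivial.
\item Since $-1\in N(F^\times)$, all genus characters are trivial on that class, so by the principal genus theorem it is a square.
\end{itemize}
Hence $\mathrm{Cl}^+(F)$ contains an element of order $4$, and $h_2\ge 2$.

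\textbf{Your fallback is inverted.} For $t=2$ the prime above $p$ is non-principal exactly when $N(\varepsilon_d)=-1$. Indeed, there are $2^{t-1}/[\{\pm1\}:N(E_F)]$ strongly ambiguous classes. When $N(\varepsilon_d)=1$ that prime is principal. For example, for $d=34$ we have $N(6+\sqrt{34})=2$, so both $\mathfrak{p}_2$ and $\mathfrak{p}_{17}=(\sqrt{34})\mathfrak{p}_2^{-1}$ are principal, although $h=2$; the class of order $2$ there is not strongly ambiguous.

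\textbf{A simpler route.} All of this can be bypassed. Chevalley's ambiguous class formula gives $|\mathrm{Cl}(F)[2]|=2^{t-1}/[\{\pm1\}:\{\pm1\}\cap N(F^\times)]$. So $\mathrm{rank}_2\,\mathrm{Cl}(F)=t-1$ if $d$ has no prime factor $\equiv 3\pmod 4$, and $t-2$ if it has one. Therefore $h$ is odd if and only if either
\begin{itemize}
\item $t=1$, i.e. $d=2$ or $d=p$, or
\item $t=2$ and some prime $\equiv 3\pmod 4$ divides $d$, i.e. $d=q$, $2q$ or $q_1q_2$.
\end{itemize}
No R\'edei or unit-norm analysis is needed.
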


\medskip

Real biquadratic fields, i.e.,   $D(2)$, with odd class numbers are classified by  the well-known results of Conner and Hurrelbrink $($cf. \cite[Theorems 1 and 2]{kuvcera1995parity}$)$. However, their classification contains a minor omission, as some families of real biquadratic number fields with odd class numbers were overlooked. By combining the  corrected  version of their result with Ku{\v{c}}era's work (cf.~\cite[Theorems 1 and 2]{kuvcera1995parity}), we obtain the following theorem that provides a complete  classification of real biquadratic fields with odd class numbers.

\medskip

\begin{lemma}\label{realBiquadCorre} Let $K$ be a real biquadratic number field. The class number of $K$ is odd if and only if   it takes one of the following forms:
\begin{enumerate}[$1)$]
	\item $K=\QQ(\sqrt{q},\sqrt{p})$ or $\QQ(\sqrt{2q},\sqrt{p})$ with $\left(\frac{p}{q}\right)=-1$ or $p\equiv5 \pmod 8$,
	\item	$K=\QQ(\sqrt{2},\sqrt{q_1q_2})$ with   $q_1\equiv3 \pmod 8$ or $q_2\equiv3 \pmod 8$, 
	\item $K=\QQ(\sqrt{p},\sqrt{q_1q_2})$  with $\left(\frac{q_1}{p}\right)=-1$ or $\left(\frac{q_2}{p}\right)=-1$,
	\item  $K=\QQ(\sqrt{2},\sqrt{p})$ with $\left(\frac{2}{p}\right)=-1$ or $[\left(\frac{2}{p}\right)=1$ and $\left(\frac{2}{p}\right)_4\not=\left(\frac{p}{2}\right)_4]$,

	\item $K=\QQ(\sqrt{p_1},\sqrt{p_2})$ with $\left(\frac{p_1}{p_2}\right)=-1$ or $[\left(\frac{p_1}{p_2}\right)=1$ and $\left(\frac{p_1}{p_2}\right)_4\not=\left(\frac{p_2}{p_1}\right)_4]$,
	\item $K=\QQ(\sqrt{2},\sqrt{q})$,   $\QQ(\sqrt{q_1},\sqrt{q_2})$, $\QQ(\sqrt{2q_1},\sqrt{2q_2})$ or $\QQ(\sqrt{q_1q_2},\sqrt{q_2q_3}).$
	
\end{enumerate}
Here, $p$, $p_1$, $p_2$  denote  primes numbers congruent to $1\pmod 4$ and $q$, $q_1$, $q_2$, $q_3$ denote  primes numbers congruent to $3\pmod 4$.
\end{lemma}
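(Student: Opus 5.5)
The proof rests on Kuroda's class number formula \eqref{equa1}: $h(K)$ is odd if and only if $q(K)\,h_2(d_1)h_2(d_2)h_2(d_3)=4$. Since $q(K)\mid 4$ and each $h_2(d_i)\geq 1$, I would first record which triples $(h_2(d_1),h_2(d_2),h_2(d_3))$ are possible. If all three are odd, then $q(K)=4$ is needed. If exactly one $h_2(d_i)=2$ and the others are odd, then $q(K)=2$ is needed. If $h_2(d_i)=h_2(d_j)=2$, then $q(K)=1$ is needed, and this forces the third class number to be odd. The case $h_2(d_i)=4$ with the other two odd and $q(K)=1$ also has to be considered. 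In every case $K$ has at least one quadratic subfield of odd class number, so $K$ is a QO-field. Therefore $K$ is among the forms in \eqref{forms1ofk}, with the base field taken from Lemma \ref{realQuadodd}.

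The number of ramified primes gives a first bound. By genus theory every ramified prime contributes, so if four or more primes ramify, some $h_2(d_i)\geq 4$ and the product is too large. I would verify this by noting that a subfield with three prime divisors in its discriminant has $h_2\geq 4$, except in the cases of Lemma \ref{realQuadeven1New} where it equals $2$. It then remains to show that such a configuration forces $q(K)$ too small, and I expect a parity argument via Fröhlich's Theorem \ref{FrohlichResults} type reasoning will be cleaner here.

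I would then take the QO-forms in turn: $K=\QQ(\sqrt{q},\sqrt{d})$, $\QQ(\sqrt{2q},\sqrt{d})$, $\QQ(\sqrt{q_1q_2},\sqrt{d})$, $\QQ(\sqrt{2},\sqrt{d})$ and $\QQ(\sqrt{p},\sqrt{d})$, with $d$ having at most two odd prime factors. In each case I would apply the same dichotomy as in the proofs of Theorems \ref{classqrs}--\ref{classpd}. If $K/\QQ(\sqrt{kd})$ is ramified, then $h_2(kd)$ must be $1$, because of the norm map and the fact that $h_2(K)$ is odd, and Lemma \ref{realQuadodd} immediately lists the possibilities. If the extension is unramified, then $h_2(kd)$ must be odd while $K$ is an unramified quadratic extension. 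By class field theory this is impossible whenever $h_2(kd)$ is odd and $>1$. When $h_2(kd)=1$, no unramified quadratic extension exists, so the unramified case contributes nothing beyond the ramified-type constraint. This is the key simplification. The outcome of this step is that every quadratic subfield through which $K$ is ramified has odd class number, or else the required $q(K)$ value is attained.

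For each surviving candidate I would compute $q(K)$ with Wada's algorithm, using the norms of fundamental units from Kučera's theorems and the square-root forms of $\varepsilon_d$ from Tables \ref{tab7}--\ref{tab8}. The list in the statement should fall out. Family 6 consists of the cases where all three $h_2(d_i)$ are $1$ and $q(K)=4$. Families 1--3 consist of cases with one subfield of $h_2=2$ and $q(K)=2$. Families 4--5 arise from $h_2(2p)$ or $h_2(p_1p_2)$ being $2$ or $1$ and depend on the quartic symbols. The main obstacle is the correction to Conner--Hurrelbrink, namely showing that families such as $\QQ(\sqrt{2q_1},\sqrt{2q_2})$ and $\QQ(\sqrt{q_1q_2},\sqrt{q_2q_3})$ really have $q(K)=4$. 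Here $h_2(q_1q_2)=1$ but $h_2(2q_1),h_2(2q_2)$ need checking, and conversely every other case with $q(K)<4$ must be ruled out. I would first try to exhibit explicitly the two independent square roots $\sqrt{\varepsilon_{d_i}\varepsilon_{d_j}}$ from the forms $[\sqrt{m_1},\sqrt{m_2}]$ in Table \ref{tab7}.
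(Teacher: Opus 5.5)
There is a genuine gap. Your ramified-case step is correct: if $K/k$ is ramified, the norm $\mathrm{Cl}(K)\to\mathrm{Cl}(k)$ is onto, so $h_2(k)=1$. Your ``key simplification'' for the unramified case, however, is false, and it discards exactly the fields that make up families $1)$--$5)$.

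If $K/k$ is an unramified quadratic extension, class field theory gives $[\mathrm{Cl}(k):N_{K/k}\mathrm{Cl}(K)]=2$, so $h_2(k)$ is even. If moreover $h(K)$ is odd, then $k^{(1)}/K$ is an unramified abelian $2$-extension of $K$, hence trivial. So $K=k^{(1)}$ and $h_2(k)=2$ exactly, never odd. (Since $h_2$ is a power of $2$, ``odd and $>1$'' is vacuous anyway.) For example, $\QQ(\sqrt3,\sqrt5)$ is unramified over $\QQ(\sqrt{15})$, $h(15)=2$, and $h(\QQ(\sqrt3,\sqrt5))=1$.

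Your later remark that families $1)$--$5)$ contain a subfield with $h_2=2$ contradicts this step. As written, the candidate list you would feed to Wada's algorithm is therefore wrong. Two further cases are left open: four ramified primes (Theorem \ref{FrohlichResults} needs at least five), and $h_2(d_i)=4$.

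The missing idea is the ramification dichotomy the paper uses.
\begin{itemize}
\item $K$ is unramified over at most one quadratic subfield. If it were unramified over $k_1$ and $k_2$, every inertia group would lie in $\mathrm{Gal}(K/k_3)$, so $k_3/\QQ$ would be unramified.
\item If $K/k$ is unramified, then $h(K)$ is odd if and only if $h_2(k)=2$. One direction is shown above. Conversely, a $2$-class group of order $2$ is cyclic, so $k^{(2)}=k^{(1)}=K$.
\item Families $1)$--$5)$ are thus read off directly from Lemma \ref{realQuadeven1New}, with no unit-index computation. Since $K$ then has the same ramified primes as $k$, no separate bound on the number of ramified primes is needed.
\item The case $h_2(d_i)=4$ is excluded: $K/\QQ(\sqrt{d_i})$ would have to be unramified, which forces $h_2(d_i)=2$.
\item If $K$ is ramified over all three subfields, your norm argument makes all three class numbers odd. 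Lemma \ref{realQuadodd} then leaves only the four shapes in $6)$. Lemma \ref{unitsrs} supplies two independent square roots, so $q(K)=4$, and Kuroda's formula gives odd class number.
\end{itemize}
Only this last step needs explicit units, and that is the part of your plan that does match the paper.
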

\begin{proof}
We have two main cases to investigate.

\noindent\ding{229} Assume that $K$ is unramified over exactly one of its three quadratic subfields denote $k$.
Notice that if  the class number of $K$ is odd, then  $h_2(k)=2$. So  Lemma  \ref{realQuadeven1New}  implies that  $k$
 is in one of the following forms:
\begin{enumerate}[$1)$]
		\item $k=\QQ(\sqrt{pq})$ or $\QQ(\sqrt{2pq})$  with  $\left(\frac{p}{q}\right)=-1$ or $p\equiv5\pmod8$.
	\item $k=\QQ(\sqrt{2q_1q_2})$ with $q_1\equiv3\pmod8$ or $q_2\equiv3\pmod8.$
	\item $k=\QQ(\sqrt{pq_1q_2})$ with $\left(\frac{p}{q_1}\right)=-1$ or $\left(\frac{p}{q_2}\right)=-1.$

	\item $k=\QQ(\sqrt{2p})$ with $\left(\frac{2}{p}\right)=-1$ or $[\left(\frac{2}{p}\right)=1$ and $\left(\frac{2}{p}\right)_4\not=\left(\frac{p}{2}\right)_4]$.

	\item $k=\QQ(\sqrt{p_1p_2})$ with $\left(\frac{p_1}{p_2}\right)=-1$ or $[\left(\frac{p_1}{p_2}\right)=1$ and $\left(\frac{p_1}{p_2}\right)_4\not=\left(\frac{p_2}{p_1}\right)_4]$.
\end{enumerate}
Hence, by  class field theory, the class number of $K$ is odd if and only if $K$ takes one of the forms listed in items enumerated from  $1)$ to $5)$ in the above theorem.

\noindent\ding{229} Now assume that $K$ is ramified over each of its three quadratic subfields. Assume that the class number of $K$ is odd and let $k$ be any of of the quadratic subfields of $K$. Thus we have $h_2(k)=1$ or $2$.
It follows from Lemmas  \ref{realQuadodd} and \ref{realQuadeven1New} that we have 
$$K=\QQ(\sqrt{2},\sqrt{q}),\quad    \QQ(\sqrt{q_1},\sqrt{q_2}),\quad \QQ(\sqrt{2q_1},\sqrt{2q_2})\  \text{ or }\ \QQ(\sqrt{q_1q_2},\sqrt{q_2q_3}).$$
Here $q_1$, $q_2$ and $q_3$ are prime numbers congruent to $3\pmod4$ and  satisfy some extra conditions as follows.
\begin{enumerate}[$\bullet$]
	\item If $K=\QQ(\sqrt{2},\sqrt{q})$,
 then $q\equiv3\pmod4$ is a prime number. Notice that in this case, we deduce from  Lemma \ref{unitsrs} that  we have $E_K= \langle{-1,\vep_{2},\sqrt{\vep_{q}},\sqrt{\vep_{2q}}\rangle}$. Therefore,    Kuroda's class number formula and Lemma  \ref{realQuadodd} implies that the class number of $K$ is odd.
	\item If $K=\QQ(\sqrt{q_1},\sqrt{q_2})$,  then  $q_1\equiv q_2\equiv3\pmod4$ are distinct prime numbers. Notice that in this case, by Lemma \ref{unitsrs} we get $E_K= \langle{-1,\vep_{q_1},\sqrt{\vep_{q_1}\vep_{q_2}},\sqrt{\vep_{q_1q_2}}\rangle}$ and so $q(K)=2$.  Thus, by Kuroda's class number formula, the class number of $K$ is odd.
	\item If $K=\QQ(\sqrt{2q_1},\sqrt{2q_2})$, then  $q_1\equiv q_2\equiv3\pmod4$ are distinct prime numbers. Notice that in this case, Lemma \ref{unitsrs}  implies that $E_K= \langle{-1,\vep_{2q_1},\sqrt{\vep_{2q_1}\vep_{2q_2}},\sqrt{\vep_{2q_1}\vep_{q_1q_2}}\rangle}$ and so $q(K)=2$. Then 
	Kuroda's class number formula implies that the class number of $K$ is odd.
	\item  If $K=\QQ(\sqrt{q_1q_2},\sqrt{q_2q_3})$,  then   $q_1\equiv q_2\equiv q_3\equiv3\pmod4$ are distinct prime numbers. Notice that in this case,  Lemma \ref{unitsrs} gives $E_K= \langle{-1,\vep_{q_1q_2},\sqrt{\vep_{q_1q_2}\vep_{q_2q_3}},\sqrt{\vep_{q_1q_2}\vep_{q_1q_3}}\rangle}$ and so $q(K)=2$. Then by	Kuroda's class number formula, the class number of $K$ is odd.
\end{enumerate}
Hence, putting all of the above together, we obtain the classification stated in this theorem. 
\end{proof}

\medskip

Now let us classify real triquadratic number fields, i.e., $D(3)$, with odd class numbers. 
We regard this classification as a particularly interesting application of the results established in the previous section.
Before stating our result, we note that the real triquadratic fields $ \QQ(\sqrt{\ell_1},\sqrt{\ell_2},\sqrt{\ell_2})$, where $\ell_1\equiv\ell_2\equiv\ell_3\equiv1\pmod4$ are distinct prime numbers, with odd class numbers were classified by   Bulant in (cf. \cite{Bulant}). More generally, we prove the following result. 

\medskip

\begin{theorem}\label{triquawithodd}
Let $K$ be  real triquadratic number field. Then the class number of $ K $ is odd if and only if $K$ is in one of the following forms:
\begin{enumerate}[$1)$]
	\item $K=\QQ(\sqrt{\ell_1},\sqrt{\ell_2},\sqrt{\ell_3})$, where $\ell_1\equiv\ell_2\equiv\ell_3\equiv 1\pmod4$ are distinct prime numbers such that one of the following conditions holds:
	\begin{enumerate}[$a)$]
		\item \( \left( \frac{\ell_1}{\ell_2} \right) = \left( \frac{\ell_1}{\ell_3} \right) = \left( \frac{\ell_2}{\ell_3} \right) = -1 \) and
		$\left( \frac{\ell_1\ell_2}{\ell_3} \right)_4 \left( \frac{\ell_1\ell_3}{\ell_2} \right)_4 \left( \frac{\ell_2\ell_3}{\ell_1} \right)_4 = 1,
		$
		\item \( \left( \frac{\ell_1}{\ell_2} \right) = 1 \), \( \left( \frac{\ell_1}{\ell_3} \right) = \left( \frac{\ell_2}{\ell_3} \right) = -1 \) and $\left(\frac{\ell_1}{\ell_2}\right)_4\not=\left(\frac{\ell_2}{\ell_1}\right)_4.$
	\end{enumerate}
	
	\item $K=\QQ(\sqrt{2},\sqrt{\ell_2},\sqrt{\ell_3})$, where $\ell_2\equiv\ell_3\equiv 1\pmod4$ are distinct prime numbers such that one of the following conditions holds:
	\begin{enumerate}[$a)$]
		\item $\left(\frac{\ell_2}{\ell_3}\right)=-1$, $h(2\ell_2 \ell_3)=4$ and \textbf{none} of the two units $\{\vep_{2}\vep_{\ell_2\ell_3}\vep_{2\ell_2\ell_3}, \vep_{2\ell_2\ell_3}\}$ is a square in $K$,
		\item $\left(\frac{\ell_2}{\ell_3}\right)=1$ and $\left(\frac{\ell_2}{\ell_3}\right)_4\not=\left(\frac{\ell_3}{\ell_2}\right)_4$,
		$h(2\ell_2\ell_3)=4$ and \textbf{none} of the two units $\{ \vep_{\ell_2\ell_3}\vep_{2\ell_2\ell_3}, \vep_{2\ell_2\ell_3}\}$ is a square in $K$.
	\end{enumerate} 
	
	\item $K=\QQ(\sqrt{2},\sqrt{\ell_2},\sqrt{\ell_3})$, where $\ell_2\equiv-\ell_3\equiv 1\pmod4$ are distinct prime numbers such that $\ell_2\equiv5\pmod8$. 
	\item $K=\QQ(\sqrt{2},\sqrt{\ell_2},\sqrt{\ell_3})$, where $\ell_2\equiv \ell_3\equiv 3\pmod4$ distinct prime numbers such that $\ell_2$ or $\ell_3 \equiv 3\pmod8$.
	\item $K=\QQ(\sqrt{\ell_1},\sqrt{\ell_2},\sqrt{\ell_3})$, where $\ell_1 \equiv\ell_2 \equiv\ell_3 \equiv3\pmod4$ are distinct prime numbers such that  one of the following conditions holds: \begin{enumerate}[$a)$]
		\item $\ell_1\equiv \ell_3 \equiv 7\pmod{8}$, $\ell_2\equiv3\pmod4$ and  $\left(\dfrac{\ell_3}{\ell_1}\right)=-\left(\dfrac{\ell_3}{\ell_2}\right)=\left(\dfrac{\ell_1}{\ell_2}\right)=1$,
		\item  $\ell_1\equiv \ell_2 \equiv 3\pmod{8}$ and  $\left(\dfrac{\ell_3}{\ell_1}\right)=-\left(\dfrac{\ell_3}{\ell_2}\right)=\left(\dfrac{\ell_1}{\ell_2}\right)=1$,
		
		
		\item  $\ell_2\equiv7\pmod8, \ell_3 \equiv 3\pmod{8}$, $\ell_1\equiv3\pmod4$  and  $\left(\dfrac{\ell_3}{\ell_1}\right)=\left(\dfrac{\ell_3}{\ell_2}\right)=-\left(\dfrac{\ell_1}{\ell_2}\right)=-1$,
		
		
		\item $\ell_2\equiv3\pmod8, \ell_3 \equiv 7\pmod{8}$, $\ell_1\equiv3\pmod4$  and  $\left(\dfrac{\ell_3}{\ell_1}\right)=\left(\dfrac{\ell_3}{\ell_2}\right)=-\left(\dfrac{\ell_1}{\ell_2}\right)=-1$.
		
		
	\end{enumerate} 
	\item $K=\QQ(\sqrt{\ell_1},\sqrt{\ell_2},\sqrt{\ell_3})$, where $\ell_1\equiv -\ell_2\equiv-\ell_3\equiv 1\pmod4$ distinct prime numbers such that one of the following conditions holds:
	\begin{enumerate}[$a)$]
		\item $\ell_1\equiv5\pmod{8}$ and
		\[
		\left(\left(\frac{\ell_3}{\ell_1}\right),\left(\frac{\ell_3}{\ell_2}\right),
		\left(\frac{\ell_1}{\ell_2}\right)\right)
		\in\{(i,j,k):i,j,k=\pm1\}
		\setminus\{(1,1,1),(1,-1,1)\},
		\]
		
		\item    $\ell_1\equiv1\pmod{8}$ and  $\left(\dfrac{\ell_3}{\ell_1}\right)=\left(\dfrac{\ell_1}{\ell_2}\right)=-1$.
	\end{enumerate}

	\item $K=\QQ(\sqrt{\ell_1},\sqrt{\ell_2},\sqrt{\ell_3})$, where $\ell_1\equiv \ell_2\equiv-\ell_3\equiv 1\pmod4$ distinct prime numbers such that one of the following conditions holds:
	\begin{enumerate}[$a)$]
		\item $\ell_1\equiv5\pmod8, \ell_2\equiv1\pmod8 $ and $\left(\frac{\ell_2}{\ell_1}\right)=\left(\frac{\ell_2}{\ell_3}\right)=-\left(\frac{\ell_1}{\ell_3}\right)=-1,$
		\item $\ell_1\equiv5\pmod8, \ell_3\equiv3\pmod8$ and $\left(\frac{\ell_2}{\ell_1}\right)=\left(\frac{\ell_2}{\ell_3}\right)=\left(\frac{\ell_1}{\ell_3}\right)=-1,$
		
		\item $\ell_1\equiv \ell_2\equiv5\pmod8$ and $\left(\frac{\ell_2}{\ell_1}\right)=-\left(\frac{\ell_2}{\ell_3}\right)=\left(\frac{\ell_1}{\ell_3}\right)=-1.$ 
	\end{enumerate}
	\item $K=\QQ(\sqrt{\ell_1},\sqrt{\ell_2},\sqrt{\ell_3\ell_4})$, where $\ell_1\equiv \ell_2\equiv-\ell_3\equiv-\ell_4\equiv1\pmod4$ are distinct prime numbers such that one of the following conditions holds:
	\begin{enumerate}[$a)$]
		\item $\left(\frac{\ell_1}{\ell_2}\right)=\left(\frac{\ell_1}{\ell_3}\right)=\left(\frac{\ell_1}{\ell_4}\right)=\left(\frac{\ell_1}{\ell_3}\right)=-\left(\frac{\ell_1}{\ell_4}\right)=-1,$
		\item $\left(\frac{\ell_1}{\ell_2}\right)=-\left(\frac{\ell_1}{\ell_3}\right)=\left(\frac{\ell_1}{\ell_4}\right)=\left(\frac{\ell_1}{\ell_3}\right)=-\left(\frac{\ell_1}{\ell_4}\right)=\left(\frac{\ell_3}{\ell_4}\right)=-1.$
	\end{enumerate}
	\item $K=\QQ(\sqrt{2},\sqrt{\ell_2},\sqrt{\ell_3\ell_4})$, where $ \ell_2\equiv-\ell_3\equiv-\ell_4\equiv1\pmod4$ are distinct prime numbers such that one of the following conditions holds:
	\begin{enumerate}[$a)$]
		\item $\ell_2\equiv5\pmod8,\ell_3\equiv \ell_4\equiv3\pmod8$ and $\left(\frac{\ell_2}{\ell_3}\right)=-\left(\frac{\ell_2}{\ell_4}\right)=-1,$ 
		
		\item $\ell_2\equiv5\pmod8,\ell_3\equiv7\pmod8, \ell_4\equiv3\pmod8$ and $\left(\frac{\ell_2}{\ell_3}\right)=-\left(\frac{\ell_2}{\ell_4}\right)=\left(\frac{\ell_4}{\ell_3}\right)=-1.$
	\end{enumerate}
	
	\item $K=\QQ(\sqrt{\ell_1},\sqrt{\ell_2},\sqrt{2\ell_3})$, where $ \ell_1\equiv\ell_2\equiv-\ell_3\equiv1\pmod4$ are distinct prime numbers such that one of the following conditions holds:
	\begin{enumerate}[$a)$]
		\item $\ell_1\equiv5\pmod8, \ell_2\equiv1\pmod{8}$, $\ell_3\equiv3\pmod{8}$ and
		$\left(\dfrac{\ell_3}{\ell_1}\right)=-\left(\dfrac{\ell_3}{\ell_2}\right)=-\left(\dfrac{\ell_1}{\ell_2}\right)=1$,
		
		\item $\ell_1\equiv \ell_2\equiv5\pmod8$  and
		$\left(\dfrac{\ell_3}{\ell_1}\right)=-\left(\dfrac{\ell_3}{\ell_2}\right)=-\left(\dfrac{\ell_1}{\ell_2}\right)=1$,
		\item $\ell_1\equiv1\pmod8, \ell_2\equiv5\pmod8$  and
		$\left(\dfrac{\ell_3}{\ell_1}\right)=\left(\dfrac{\ell_3}{\ell_2}\right)=\left(\dfrac{\ell_1}{\ell_2}\right)=-1$.
	\end{enumerate}
	
	\item $K=\QQ(\sqrt{\ell_1\ell_2},\sqrt{2\ell_1},\sqrt{\ell_4})$, where $\ell_1\equiv\ell_2\equiv-\ell_4\equiv3\pmod4$ are distinct prime numbers such that one of the following conditions holds:
	\begin{enumerate}[$a)$]
		\item $\left(\frac{\ell_4}{\ell_1}\right)=\left(\frac{\ell_4}{\ell_2}\right)=-1,$ 
		\item $\left(\frac{\ell_4}{\ell_1}\right)=-\left(\frac{\ell_4}{\ell_2}\right)=1$ and $\ell_4\equiv5\pmod8.$ 
	\end{enumerate}

	\item $K=\QQ(\sqrt{2\ell_2},\sqrt{2\ell_3},\sqrt{\ell_4})$,  where $\ell_2\equiv\ell_3\equiv-\ell_4\equiv3\pmod4$ are distinct prime numbers such that one of the following conditions holds:
	
		\begin{enumerate}[$a)$]
		\item $\ell_4\equiv5\pmod8$ and $\left(\dfrac{\ell_4}{\ell_3}\right)=-1$ or $[\left(\dfrac{\ell_4}{\ell_3}\right)=-\left(\dfrac{\ell_4}{\ell_2}\right)=1]$,
		
		\item $\ell_4\equiv1\pmod8$ and  $\left(\dfrac{\ell_4}{\ell_3}\right)=\left(\dfrac{\ell_4}{\ell_2}\right)=-1$.
	\end{enumerate}

%

	\item $K=\QQ(\sqrt{\ell_1\ell_2},\sqrt{\ell_1\ell_3},\sqrt{2})$, where $\ell_1\equiv\ell_2\equiv\ell_3\equiv3\pmod4$ are distinct prime numbers such  that $\ell_3\equiv3\pmod8$.
	
	\item $K=\QQ(\sqrt{\ell_1\ell_2},\sqrt{\ell_1\ell_3},\sqrt{\ell_1\ell_4})$, where $\ell_1\equiv\ell_2\equiv\ell_3\equiv\ell_4\equiv3\pmod4$ are distinct prime numbers such  that one of the following conditions holds:
	\begin{enumerate}[$a)$]
		\item $\left(\frac{\ell_2}{\ell_3}\right)=\left(\frac{\ell_2}{\ell_4}\right)=\left(\frac{\ell_4}{\ell_1}\right)=-\left(\frac{\ell_2}{\ell_1}\right)=-\left(\frac{\ell_3}{\ell_4}\right)=1,$
		\item $\left(\frac{\ell_2}{\ell_3}\right)=\left(\frac{\ell_2}{\ell_4}\right)=\left(\frac{\ell_4}{\ell_1}\right)=\left(\frac{\ell_2}{\ell_1}\right)=-\left(\frac{\ell_3}{\ell_1}\right)=-\left(\frac{\ell_3}{\ell_4}\right)=1.$
	\end{enumerate}

	\item $K=\QQ(\sqrt{\ell_1\ell_2},\sqrt{2\ell_1},\sqrt{\ell_1\ell_4})$, where $\ell_1\equiv\ell_2\equiv\ell_3\equiv3\pmod4$ are distinct prime numbers such  that one of the following conditions holds:
	\begin{enumerate}[$a)$]
		\item $\ell_2\equiv \ell_4\equiv7\pmod{8}$, $\ell_1\equiv3\pmod{8}$ and
		$\left(\dfrac{\ell_2}{\ell_1}\right)=-\left(\dfrac{\ell_2}{\ell_4}\right)=\left(\dfrac{\ell_1}{\ell_4}\right)=1$,

		\item $\ell_1\equiv\ell_4\equiv3\pmod{8}$, $\ell_2\equiv7\pmod{8}$ and
		$\left(\dfrac{\ell_2}{\ell_1}\right)=-\left(\dfrac{\ell_2}{\ell_4}\right)=-\left(\dfrac{\ell_1}{\ell_4}\right)=-1$.
	\end{enumerate}
\end{enumerate}
\end{theorem}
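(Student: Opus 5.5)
The plan is to reduce odd class number of a triquadratic field to $2$-class number $2$ of suitable biquadratic subfields, using the classifications in Theorems \ref{classqrs}, \ref{class2qrs}, \ref{classq1q2d}, \ref{class2d}, \ref{classpd} and \ref{classnoadqua}. The basic tool is class field theory. If $L/M$ is a quadratic extension in which some prime ramifies, then $h_2(M)$ divides $h_2(L)$. If $L/M$ is unramified and quadratic, then $L$ lies in the Hilbert $2$-class field of $M$, and $h_2(L)$ is odd if and only if $A(M)$ is cyclic of order $2$ (a cyclic $2$-group whose maximal unramified $2$-extension is $L$ itself).

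For a real triquadratic field $K$, Theorem \ref{FrohlichResults} shows that at most four rational primes ramify in $K$. Genus theory then restricts the shape of $K=\QQ(\sqrt{\ell_1},\sqrt{\ell_2},\sqrt{\ell_3})$ to a finite list of types, determined by which of $2$, $p\equiv1$ and $q\equiv3\pmod4$ divide the discriminant.

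\textbf{Step 1 (necessity).} Assume $h(K)$ is odd. Every one of the seven biquadratic subfields $F$ over which $K$ is ramified must have odd class number, so $F$ is one of the fields of Lemma \ref{realBiquadCorre}. Any biquadratic subfield $F$ with $K/F$ unramified must satisfy $h_2(F)=2$.

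We then show that some biquadratic subfield $F$ has $K/F$ unramified. This comes from counting ramification. Three primes ramify in $K$ with inertia groups of order $2$ (odd primes) or possibly $4$ (the prime $2$). If these inertia groups do not generate $\mathrm{Gal}(K/\QQ)$, there is an index-$2$ subgroup containing all of them, and its fixed field $F$ has $K/F$ unramified at finite primes, hence unramified since $K$ is real. When the inertia groups do generate the whole group (for instance in $\QQ(\sqrt{q_1},\sqrt{q_2},\sqrt{q_3})$ or $\QQ(\sqrt 2,\sqrt{q_1},\sqrt{q_2})$), we argue directly: every biquadratic subfield must have odd class number, and Lemma \ref{realBiquadCorre} forces the congruence conditions in items $4)$ and $5)$. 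In the generated case I would confirm oddness with Kuroda's formula for $V_8$-extensions, using units taken from Lemma \ref{unitsrs}. I expect this to give the conditions in $4)$ but would check it carefully.

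\textbf{Step 2 (sufficiency via the unramified subfield).} When $K/F$ is unramified, $h_2(K)$ is odd if and only if $h_2(F)=2$. This holds because $A(F)$ of order $2$ forces $K=F^{(1)}$, and then $K^{(1)}\subseteq \mathcal L(F)=F^{(1)}$. So each item of the theorem should be read off from a row of one of the classification theorems by choosing $F$ appropriately:
\begin{itemize}
\item item $2)$ from Theorem \ref{class2d} D);
\item item $3)$ from Theorem \ref{classpd} C) or \ref{class2d} A), with $F=\QQ(\sqrt2,\sqrt{\ell_2\ell_3})$;
\item items $5)$, $6)$, $7)$, $11)$ from Theorem \ref{classqrs} B);
\item items $8)$, $9)$ from Theorem \ref{classpd} H), G);
\item items $10)$, $12)$, $13)$, $16)$ from Theorem \ref{class2qrs} B), D) and \ref{classq1q2d} J), E);
\item items $14)$, $15)$ from Theorem \ref{classq1q2d} H), G);
\item item $1)$ from Theorem \ref{classnoadqua} 5) together with Bulant's result \cite{Bulant}.
\end{itemize}

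\textbf{Step 3 (matching both directions).} There may be several unramified biquadratic subfields $F$. Each gives the same criterion, since $h_2(K)$ odd is intrinsic to $K$. So it suffices to pick one convenient $F$ per genus type, translate the conditions of the relevant theorem into the labels $\ell_i$, and check that the relabelings are consistent. Conversely, every field in the list contains such an $F$ with $h_2(F)=2$, so the class number is odd.

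\textbf{Main obstacle.} The hard part is the bookkeeping of Step 1. We must enumerate all genus types with at most four ramified primes and, for each, show either that no biquadratic subfield has the required property (contradiction) or identify the unramified $F$. Where the same $K$ arises from two different $F$, the two resulting criteria must be reconciled. For this I would first list the types by the multiset of residues mod $4$ of the ramified primes, and handle the type containing $2$ separately because the inertia group at $2$ can be larger.
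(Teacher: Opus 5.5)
Your overall strategy is the paper's: pass to a biquadratic $F$ with $K/F$ unramified, and use $h(K)$ odd $\iff h_2(F)=2$. The gap is the criterion you give in Step 1 for when such an $F$ exists, and it leads you to a false conclusion for items 4) and 5).

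\textbf{The Step 1 criterion is inverted.} If a subgroup $H$ contains all inertia groups, it is $K^H/\QQ$, not $K/K^H$, that is unramified. By Minkowski, $H$ is then the whole group, so the inertia groups \emph{always} generate $\mathrm{Gal}(K/\QQ)$. Also, the fixed field of an index-$2$ subgroup is quadratic, not biquadratic.

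The correct test is this: for $[K:F]=2$, $K/F$ is unramified iff the generator $\tau$ of $\mathrm{Gal}(K/F)$ lies in no inertia group. Each ramified odd prime contributes one involution, and $2$ contributes at most three. By Theorem \ref{FrohlichResults} at most four primes ramify, so at most $6$ of the $7$ involutions are covered. Hence an unramified $F$ always exists, and there is no ``generated case''.

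\textbf{Consequence for items 4) and 5).} Your two examples are exactly where this matters. In $\QQ(\sqrt{q_1},\sqrt{q_2},\sqrt{q_3})$, $2$ also ramifies, with $I_2$ generated by the element negating all three roots; $K$ is unramified over $\QQ(\sqrt{q_1q_2},\sqrt{q_3})$. Likewise $\QQ(\sqrt2,\sqrt{q_1},\sqrt{q_2})$ is unramified over $\QQ(\sqrt{2q_1},\sqrt{q_2})$.

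These subfields have even class number. So your ``direct'' requirement that all seven biquadratic subfields have odd class number would exclude items 4) and 5) entirely. The paper treats them with exactly these $F$, via Theorem \ref{class2qrs} A)1) and Theorem \ref{classq1q2d} D) (that is, Theorem \ref{classqrs} B)3)--6)). Note also that item 4) is missing from your Step 2 list, and your Step 2 entry for item 5) contradicts your Step 1.

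\textbf{The dictionary is not automatic.} With Step 1 repaired, what remains is matching each type to a classification, and Step 3 understates the care needed. Item 3) shows this:
\begin{itemize}
\item Theorem \ref{classpd} C) concerns $\QQ(\sqrt{\ell_2},\sqrt{2\ell_3})$. $K$ is ramified over it at $2$, so the relevant condition there is odd class number, not $h_2=2$.
\item Reading item 3) off Theorem \ref{class2d} A) for $F=\QQ(\sqrt2,\sqrt{\ell_2\ell_3})$ would admit the extra family $\ell_2\equiv1\pmod 8$, $\left(\frac{\ell_2}{\ell_3}\right)=-1$, which item 3) excludes, and rightly so. In that family, rows 17 and 19 of Table \ref{tab7} put both $\sqrt{\vep_{\ell_2\ell_3}}$ and $\sqrt{\vep_{2\ell_2\ell_3}}$ in $F$. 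Hence $q(F)=4$ and $h_2(F)=4$. For example, with $\ell_2=17$, $\ell_3=3$: $\sqrt{\vep_{51}}=(7+\sqrt{51})/\sqrt2$ and $\sqrt{\vep_{102}}=\sqrt{51}+5\sqrt2$.
\end{itemize}
The paper instead uses $F=\QQ(\sqrt{2\ell_2},\sqrt{\ell_3})$ with Theorem \ref{classqrs} C).

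So for each genus type you must choose an $F$ for which the cited classification is actually valid, and then match the labels. For item 1), the paper simply quotes Bulant.
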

\begin{proof}
Let $K=\QQ(\sqrt{\ell_1},\sqrt{\ell_2},\sqrt{\ell_3})$, where $\ell_1, \ell_2$ and $\ell_3$ be distinct positive square-free integers. Notice that if  $h_2(K)=1$, then by Theorem \ref{FrohlichResults}, we have:
$$M_K:=\mathrm{Card}\{p\in \mathbb{P} : p|\delta_K \}\leq 4.$$
Here $\mathbb{P}$ denotes the set of  prime numbers and $\delta_K$ denotes  the absolute discriminant  of $K$. 
Notice that we have $ M_K=3$ or $4$. Thus, we have two main cases to investigate.

\noindent\ding{229} Let us start by assuming that $M_K=3$. 
This implies that  all the integers $\ell_i$ are prime numbers and so we have the following sub-cases:
\begin{enumerate}[$1)$]
	\item Let  $\ell_1 \equiv\ell_2 \equiv\ell_3 \equiv 1\pmod4$ be prime numbers. Then according to \cite{Bulant}, the class number of $K$ is odd if and only if $K$ takes the form in the first item of our theorem.

	\item Let $\ell_1=2$ and $\ell_2\equiv \ell_3 \equiv1\pmod4$ be distinct  prime numbers.
	Notice that $K/\QQ(\sqrt{2},\sqrt{\ell_1\ell_2})$ is a quadratic unramified extension. Therefore, 
	$h(K)$ is odd if and only if  the $2$-class number of $\QQ(\sqrt{2},\sqrt{\ell_1\ell_2})$ equals $2$, which is, by Theorem \ref{class2d} (cf. case $D)$), equivalent to the fact that $K$ takes the forms in the second item of our theorem.

	\item Let  $\ell_1=2$ and $\ell_2\equiv -\ell_3 \equiv1\pmod4$  be distinct   prime numbers. As $K/k$ is an unramified quadratic extension with $k=\QQ(\sqrt{2\ell_2},\sqrt{\ell_3})$, we have  $h(K)$ is odd if and only if $h_2(k)=2$. So by Theorem \ref{classqrs} (cf. case $C)$),   we conclude that, $h_2(k)=2$ exactly when $\ell_2\equiv 5 \pmod8.$ 
	
	\item Let $\ell_1=2$ and $\ell_2\equiv \ell_3 \equiv3\pmod4$ be distinct   prime numbers. As $K/k$ is an unramified quadratic extension with $k=\QQ(\sqrt{2\ell_2},\sqrt{\ell_3})$, then  by Theorem \ref{class2qrs} (cf. case $A)-1)$)  $h(K)$ is odd  if and only if $\ell_2$ or $\ell_3 \equiv 3\pmod8$. 
	
	\item Let $\ell_1\equiv \ell_2\equiv\ell_3\equiv3\pmod4$ be distinct   prime numbers. Notice that $K/k$,  with $k=\QQ(\sqrt{\ell_1\ell_2},\sqrt{\ell_3})$, is an unramified quadratic extension. Therefore, by Theorem \ref{classq1q2d} (cf. Item $D)$), 
	$h(K)$ is odd if and only if $d$ takes one of the forms in $d$ takes one of the forms listed in Item $5)$ of our theorem.
	
	\item  Let $\ell_1\equiv -\ell_2\equiv-\ell_3\equiv 1\pmod4$ be distinct prime numbers. Consider the quadratic unramified extension $K/k$, $k=\QQ(\sqrt{\ell_1\ell_2},\sqrt{\ell_3})$. So by  Theorem \ref{classqrs} (cf. Items $B)-1)$ and $B)-2)$),   $h_2(K)=1$ if and only if  $d$ takes one of the conditions listed in  $6)$ of our theorem.
	
	\item Let $\ell_1\equiv \ell_2\equiv-\ell_3\equiv 1\pmod4$ be distinct prime numbers. Notice that $K/k$, with $k=\QQ(\sqrt{\ell_1\ell_3},\sqrt{\ell_2})$, is an unramified quadratic extension.  So by  Theorem \ref{classpd} (cf. Item $E)$),   $h_2(K)=1$ if and only if $d$ takes one of the conditions listed in  $7)$ of our theorem.
\end{enumerate}

\noindent\ding{229}	Now assume that $M_K=4$.
So $K$ is in one of the following forms: 
$$\QQ(\sqrt{\ell_1},\sqrt{\ell_2},\sqrt{\ell_3\ell_4}),\; \; \QQ(\sqrt{\ell_1\ell_2},\sqrt{\ell_1\ell_3},\sqrt{\ell_4})\;\; or \;\; \QQ(\sqrt{\ell_1\ell_2},\sqrt{\ell_1\ell_3},\sqrt{\ell_1\ell_4}),  $$ where $\ell_i$ are distinct prime numbers.  Thus, we have $3$ main cases to investigate. 
\begin{enumerate}[$\bullet$]
	\item Let $K=\QQ(\sqrt{\ell_1},\sqrt{\ell_2},\sqrt{\ell_3\ell_4})$. Notice that $\QQ(\sqrt{\ell_1},\sqrt{\ell_2},\sqrt{\ell_3},\sqrt{\ell_4})/K$ is ramified if and only if one of the following three congruence conditions holds: \begin{enumerate}[$\mathrm{C}1)$]
		\item $\ell_1\equiv \ell_2\equiv-\ell_3\equiv-\ell_4\equiv1\pmod4,$
		\item $\ell_1=2$ and $\ell_2\equiv-\ell_3\equiv-\ell_4\equiv1\pmod4,$
		\item $\ell_1\equiv\ell_2\equiv-\ell_3\equiv1\pmod4$ and $\ell_4=2.$
	\end{enumerate}
	Otherwise we have $h_2(K)\geq 2$, and such fields must be rejected. Now, for each of the cases above, we choose $k=\QQ(\sqrt{\ell_1\ell_2},\sqrt{\ell_3\ell_4})$. It follows that $K/k$ is an unramified quadratic extension. For the case $\mathrm{C}1)$ we have according to Theorem \ref{classq1q2d} (cf. $F)-1)$ and $F)-2)$), $h_2(K)=1$ if and only of one of the following conditions holds: 
	
	\begin{enumerate}
		\item $\left(\frac{\ell_1}{\ell_2}\right)=\left(\frac{\ell_1}{\ell_3}\right)=\left(\frac{\ell_1}{\ell_4}\right)=\left(\frac{\ell_1}{\ell_3}\right)=-\left(\frac{\ell_1}{\ell_4}\right)=-1,$
		\item $\left(\frac{\ell_1}{\ell_2}\right)=-\left(\frac{\ell_1}{\ell_3}\right)=\left(\frac{\ell_1}{\ell_4}\right)=\left(\frac{\ell_1}{\ell_3}\right)=-\left(\frac{\ell_1}{\ell_4}\right)=\left(\frac{\ell_3}{\ell_4}\right)=-1.$
	\end{enumerate}
	
	For the case $\mathrm{C}2)$ we have according to Theorem \ref{classq1q2d} (cf. $C)-1)$ and $C)-2)$), $h_2(K)=1$ if and only of one of the following conditions holds: 
	
	\begin{enumerate}
		\item $\ell_2\equiv5\pmod8,\ell_3\equiv \ell_4\equiv3\pmod8$ and $\left(\frac{\ell_2}{\ell_3}\right)=-\left(\frac{\ell_2}{\ell_4}\right)=-1,$ or
		
		\item $\ell_2\equiv5\pmod8,\ell_3\equiv7\pmod8, \ell_4\equiv3\pmod8$ and $\left(\frac{\ell_2}{\ell_3}\right)=-\left(\frac{\ell_2}{\ell_4}\right)=\left(\frac{\ell_4}{\ell_3}\right)=-1.$
	\end{enumerate}
	
	For the remaining case $\mathrm{C}3)$  in this sub-case we have  according to Theorem \ref{class2qrs} (cf. $B)-3)$ to $B)-5)$), $h_2(K)=1$ if and only of one of the following conditions holds:
	\begin{enumerate}
		\item $\ell_1\equiv5\pmod8, \ell_2\equiv1\pmod{8}$, $\ell_3\equiv3\pmod{8}$ and
		$\left(\dfrac{\ell_3}{\ell_1}\right)=-\left(\dfrac{\ell_3}{\ell_2}\right)=-\left(\dfrac{\ell_1}{\ell_2}\right)=1$,
		
		\item $\ell_1\equiv \ell_2\equiv5\pmod8$  and
		$\left(\dfrac{\ell_3}{\ell_1}\right)=-\left(\dfrac{\ell_3}{\ell_2}\right)=-\left(\dfrac{\ell_1}{\ell_2}\right)=1$,
		\item $\ell_1\equiv1\pmod8, \ell_2\equiv5\pmod8$  and
		$\left(\dfrac{\ell_3}{\ell_1}\right)=\left(\dfrac{\ell_3}{\ell_2}\right)=\left(\dfrac{\ell_1}{\ell_2}\right)=-1$.
	\end{enumerate}

	\item Let $K=\QQ(\sqrt{\ell_1\ell_2},\sqrt{\ell_1\ell_3},\sqrt{\ell_4})$. Notice that $\QQ(\sqrt{\ell_1},\sqrt{\ell_2},\sqrt{\ell_3},\sqrt{\ell_4})/K$ ramify if and only if one of the following conditions holds : \begin{enumerate}[$\mathrm{C}1)$]
		\item $\ell_1\equiv \ell_2\equiv-\ell_4\equiv3\pmod4$ and $\ell_3=2,$
		\item $\ell_1=2$ and $\ell_2\equiv\ell_3\equiv-\ell_4\equiv3\pmod4,$
		\item $\ell_1\equiv\ell_3\equiv-\ell_4\equiv3\pmod4$ and $\ell_2=2.$
		\item $\ell_1\equiv\ell_2\equiv\ell_3\equiv3\pmod4$ and $\ell_4=2.$
	\end{enumerate}

	Following the same idea as in the previous case, and using the unramified (under all above conditions $C1)$--$C4)$) quadratic extension $K/k$ where  $k=\QQ(\sqrt{\ell_1\ell_2},\sqrt{\ell_1\ell_3\ell_4})$, we obtain the following investigation:\\
	In case $C1)$, it follows from Theorem \ref{classq1q2d} (cf. $J)-1)$ and $J)-2)$) we have $h_2(K)=1$ if and only if one of the following conditions holds:
	
	\begin{enumerate}
		\item $\left(\frac{\ell_4}{\ell_1}\right)=\left(\frac{\ell_4}{\ell_2}\right)=-1,$ 
		\item $\left(\frac{\ell_4}{\ell_1}\right)=-\left(\frac{\ell_4}{\ell_2}\right)=1$ and $\ell_4\equiv5\pmod8.$
	\end{enumerate}

	For the case $\mathrm{C}2)$,  according to Theorem \ref{class2qrs} (cf. $D)-1)$ and $D)-2)$) we have $h_2(K)=1$ if and only if one of the following conditions holds:
	\begin{enumerate}[$a)$]
		\item $\ell_4\equiv5\pmod8$ and $\left(\dfrac{\ell_4}{\ell_3}\right)=-1$ or $[\left(\dfrac{\ell_4}{\ell_3}\right)=-\left(\dfrac{\ell_4}{\ell_2}\right)=1]$,
		
		\item $\ell_4\equiv1\pmod8$ and  $\left(\dfrac{\ell_4}{\ell_3}\right)=\left(\dfrac{\ell_4}{\ell_2}\right)=-1$.
	\end{enumerate}
	
%

	Now for case $C3)$. By applying  Theorems \ref{class2qrs} and \ref{realBiquad} we conclude that $h_2(K)\geq2.$ Then  such field must be rejected.\\
	Finally if condition $C4)$ holds, then from Theorem \ref{classq1q2d} (cf. $H)$) it follows that $h_2(K)=1$ if and only if $\ell_3\equiv 3\pmod8$.

	\item Let $K=\QQ(\sqrt{\ell_1\ell_2},\sqrt{\ell_1\ell_3},\sqrt{\ell_1\ell_4})$. Notice that $\QQ(\sqrt{\ell_1},\sqrt{\ell_2},\sqrt{\ell_3},\sqrt{\ell_4})/K$ ramify if and only if one of the following conditions holds : \begin{enumerate}[$\mathrm{C}1)$]
		\item $\ell_1\equiv \ell_2\equiv\ell_3\equiv\ell_4\equiv3\pmod4$,
		\item $\ell_3=2$ and $\ell_1\equiv\ell_2\equiv\ell_4\equiv3\pmod4,$
		\item $\ell_1=2$ and  $\ell_2\equiv\ell_4\equiv-\ell_3\equiv3\pmod4$.
	\end{enumerate}

	By the same investigation as in the two above forms of $K$, by using the suitable unramified (under conditions $C1)$--$C3)$) quadratic extension  $K/k$, where $k=\QQ(\sqrt{\ell_2\ell_3},\sqrt{\ell_1\ell_4})$ we caracterise eauch case separably:\\
	If conditions C1) holds, then by Theorem \ref{classq1q2d} (cf. $G)-1)$ and $G)-2)$)  we have $h_2(K)=1$ if and only if one of the following conditions holds:
	
	\begin{enumerate}
		\item $\left(\frac{\ell_2}{\ell_3}\right)=\left(\frac{\ell_2}{\ell_4}\right)=\left(\frac{\ell_4}{\ell_1}\right)=-\left(\frac{\ell_2}{\ell_1}\right)=-\left(\frac{\ell_3}{\ell_4}\right)=1,$ or
		\item $\left(\frac{\ell_2}{\ell_3}\right)=\left(\frac{\ell_2}{\ell_4}\right)=\left(\frac{\ell_4}{\ell_1}\right)=\left(\frac{\ell_2}{\ell_1}\right)=-\left(\frac{\ell_3}{\ell_1}\right)=-\left(\frac{\ell_3}{\ell_4}\right)=1.$
	\end{enumerate}
	
	If conditions C2) holds, then by Theorem \ref{class2qrs} (cf. $B)-1)$ and $B)-2)$) we have $h_2(K)=1$ if and only if one of the following conditions holds:

	\begin{enumerate}
		\item $\ell_2\equiv \ell_4\equiv7\pmod{8}$, $\ell_1\equiv3\pmod{8}$ and
		$\left(\dfrac{\ell_2}{\ell_1}\right)=-\left(\dfrac{\ell_2}{\ell_4}\right)=\left(\dfrac{\ell_1}{\ell_4}\right)=1$,

		\item $\ell_1\equiv\ell_4\equiv3\pmod{8}$, $\ell_2\equiv7\pmod{8}$ and
		$\left(\dfrac{\ell_2}{\ell_1}\right)=-\left(\dfrac{\ell_2}{\ell_4}\right)=-\left(\dfrac{\ell_1}{\ell_4}\right)=-1$.
	\end{enumerate}
	
	%
	%
	%
	%
	
	Finally if the case $C3)$ holds. We have by Theorems \ref{class2qrs} and \ref{realBiquad} we have $h_2(K)\geq 2$. Hence such field must be rejected.\\
	
	%
	%
	%
	%
	%

	Hence, putting all of the above together, we obtain the complete classification stated in the theorem. This completes the proof. 
\end{enumerate}
\end{proof}

\begin{remark}
 Let  $\ell_1\equiv\ell_2\equiv\ell_3\equiv 1\pmod4$ three distinct prime numbers such that  $\left( \frac{\ell_1}{\ell_2} \right) = \left( \frac{\ell_1}{\ell_3} \right) = \left( \frac{\ell_2}{\ell_3} \right) = -1$ and $K=\QQ(\sqrt{\ell_1 },\sqrt{ \ell_2},\sqrt{ \ell_3})$. From Theorem \ref{triquawithodd} (more precisely, its first item) and Theorem \ref{classnoadqua} (more precisely, its fifth item), we deduce that we have:
	\[
	\vep_{\ell_1 \ell_2}\vep_{\ell_2\ell_3}\vep_{\ell_1\ell_3} \;\;\text{is {\bf not} a square in}\;\; \QQ(\sqrt{\ell_1\ell_2},\sqrt{\ell_2\ell_3}) \Longleftrightarrow \left( \frac{\ell_1\ell_2}{\ell_3} \right)_4 \left( \frac{\ell_1\ell_3}{\ell_2} \right)_4 \left( \frac{\ell_2\ell_3}{\ell_1} \right)_4 = 1.
	\]
\end{remark}

\medskip

To complete our classification, we invoke    Mouhib's  theorem (cf. The Main Theorem of \cite{paritymohib})
that plays the second main key step in this classification. In his theorem, Mouhib classified all Fröhlich real multiquadratic fields of degree $16$ (i.e., 	$P(n)$ with $n=4$, cf., Page \pageref{frohfi} for this definition)  with odd class numbers. The below result is an extension of that theorem to any real multiquadratic fields of degree $16$ (i.e., 	$D(n)$ with $n=4$).

\medskip

\begin{theorem}\label{duaqdriwithodd}Let $K$ be a real multiquadratic number fields of degree $16$. Then the class number of $K$ is odd if and only if $K=\QQ(\sqrt{\ell_1}, \sqrt{\ell_2}, \sqrt{\ell_3}, \sqrt{\ell_4})$, where $\ell_1, \ell_2, \ell_3$ and $\ell_4$ are  distinct prime numbers satisfying, after a suitable permutation, one of the following conditions:
\begin{enumerate}[$1)$]
	\item $\ell_1=2$, $\ell_2\equiv \ell_3 \equiv \ell_4 \equiv-1 \pmod4$, $\left(\dfrac{2}{\ell_2}\right)=-\left(\dfrac{2}{\ell_3}\right)= -\left(\dfrac{2}{\ell_4}\right)=1$ and $\left(\dfrac{\ell_2}{\ell_3}\right)\left(\dfrac{\ell_2}{\ell_4}\right)=-1.$
	\item $\ell_1=2$, $\ell_2\equiv-\ell_3 \equiv -\ell_4 \equiv1 \pmod4$, $\left(\dfrac{2}{\ell_2}\right)=-1,$ and 
	\begin{enumerate}[$i)$]
		\item $\left(\dfrac{\ell_2}{\ell_3}\right)=\left(\dfrac{\ell_2}{\ell_4}\right)=-1$ and $\left(\dfrac{2}{\ell_3}\right)\left(\dfrac{2}{\ell_4}\right)=-1;$ or
		\item $\left(\dfrac{2}{\ell_3}\right)=\left(\dfrac{2}{\ell_4}\right)=-1$ and $\left(\dfrac{\ell_2}{\ell_3}\right)\left(\dfrac{\ell_2}{\ell_4}\right)=-1;$ or
		\item $\left(\dfrac{\ell_2}{\ell_3}\right)\left(\dfrac{\ell_2}{\ell_4}\right)=\left(\dfrac{2}{\ell_3}\right)\left(\dfrac{2}{\ell_4}\right)=-1$ and $\left(\dfrac{\ell_2}{\ell_3}\right)\neq \left(\dfrac{2}{\ell_3}\right).$
	\end{enumerate}
\end{enumerate}
\end{theorem}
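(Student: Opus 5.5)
First I will reduce to the Fröhlich case. By Theorem \ref{FrohlichResults}, if $h(K)$ is odd then at most $4$ rational primes ramify in $K$. A degree-$16$ real multiquadratic field has at least four ramified primes, since its Galois group $(\ZZ/2)^4$ embeds in the group of quadratic characters generated by the ramified primes. Hence exactly four primes $\ell_1,\dots,\ell_4$ ramify. Then $K$ is a subfield of $\QQ(\sqrt{\ell_1^*},\dots,\sqrt{\ell_4^*})$ of the same degree, restricted to the real quadratic characters. When every prime is $2$ or $1\pmod4$, or when the signs allow it, this gives $K=\QQ(\sqrt{\ell_1},\sqrt{\ell_2},\sqrt{\ell_3},\sqrt{\ell_4})$. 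If $K$ is not of this form, then $K$ is contained in the Fröhlich field $F=P(4)$, and $F/K$ is a quadratic extension. I will show that it is unramified at all finite primes: the inertia groups at each $\ell_i$ have order $2$ in both fields, since $F$ and $K$ share the same four ramified primes. This means $h(K)$ is even. The point to check is that $F/K$ is unramified exactly when some $\ell_i\equiv 3\pmod 4$ makes $K$ differ from $P(4)$, and that $F$ is real. The analogous arguments in the proof of Theorem \ref{triquawithodd} (the conditions C1)--C4) there) give the model.

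It therefore suffices to classify the Fröhlich fields $P(4)$ with odd class number, and this is exactly Mouhib's Main Theorem \cite{paritymohib}. I will quote it and translate his conditions into the stated Legendre-symbol form. As a consistency check, and as an alternative route, I can redo the classification with the same method as Theorem \ref{triquawithodd}. Choose a triquadratic subfield $k$ with $K/k$ unramified, for instance $k=\QQ(\sqrt{\ell_1},\sqrt{\ell_2},\sqrt{\ell_3\ell_4})$ when $\ell_3\ell_4\equiv1\pmod4$. Then $h(K)$ is odd if and only if $h_2(k)=2$ and the $2$-class field tower of $k$ is abelian, by Remark \ref{abelianquaexten}. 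This would need a degree-$8$ analogue of the degree-$4$ classification, which we do not have. So the primary route is Mouhib's theorem.

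To eliminate the remaining congruence configurations, I use Theorem \ref{triquawithodd} and Lemma \ref{realBiquadCorre}. If two or more of the $\ell_i$ are $\equiv1\pmod4$, there is an unramified quadratic subextension over a triquadratic subfield. That subfield must have $2$-class number exactly $2$, and by the triquadratic odd-class-number list together with genus-theory counts this fails. Similarly, if all four $\ell_i$ are $\equiv3\pmod4$, genus theory on $\QQ(\sqrt{\ell_1\ell_2\ell_3\ell_4})$ gives even class number in $K$. The survivors are the cases $\ell_1=2$ with three primes $\equiv3\pmod4$, and $\ell_1=2$ with one prime $\equiv1\pmod4$ and two primes $\equiv3\pmod4$. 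These are the cases in the statement.

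The main obstacle is the first step: showing rigorously that any degree-$16$ field with exactly four ramified primes which is not $P(4)$ has a quadratic extension unramified at all finite places. This needs a careful bookkeeping of the real characters $\ell$ versus $\ell^*$ and of ramification at $2$. I will handle it by listing the possible character groups explicitly, in the style of conditions C1)--C3) in Theorem \ref{triquawithodd}.
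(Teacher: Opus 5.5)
Your overall route is the paper's: Fr\"ohlich's theorem (Theorem \ref{FrohlichResults}) gives at most four ramified primes, and Mouhib's Main Theorem then handles $P(4)$. But you argue the middle step incorrectly, and the ``main obstacle'' you single out does not exist.

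\textbf{The reduction step.} You cannot have $K\subseteq F=P(4)$ with $[F:K]=2$. Both fields have degree $16$, so containment forces $K=F$, and there is no quadratic extension $F/K$ to prove unramified.

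The configuration you are worried about is empty. Suppose the four ramified primes are odd and one of them is $\equiv 3\pmod 4$. Then the real quadratic fields unramified outside them are the $\QQ(\sqrt{d})$ with $d\equiv 1\pmod 4$, and their compositum has degree only $8$. (Also, $P(4)$ itself would then have $2$ as a fifth ramified prime.) You have transplanted the degree-$8$ situation from the proof of Theorem \ref{triquawithodd}. There $\QQ(\sqrt{\ell_1},\sqrt{\ell_2},\sqrt{\ell_3\ell_4})$ has index $2$ in $P(4)$, and conditions C1)--C3) decide whether $P(4)$ is unramified over it. In degree $16$ there is no such room.

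The reduction is really a rank count:
\begin{enumerate}
\item Every prime dividing some $\ell_i$ ramifies in $K$. An odd $p\mid\ell_i$ ramifies in $\QQ(\sqrt{\ell_i})$, and $2\mid\ell_i$ means $\ell_i\equiv 2\pmod 4$.
\item By Kummer theory, the classes of the positive integers $\ell_1,\dots,\ell_4$ are $\mathbb{F}_2$-independent in $\QQ^{\times}/\QQ^{\times 2}$. They lie in the span of the classes of the $m$ primes dividing them, so $m\geq 4$.
\item If at most four primes ramify, then $m=4$ and the $\ell_i$ span the whole space. Hence $K=\QQ(\sqrt{p_1},\dots,\sqrt{p_4})$ with exactly $p_1,\dots,p_4$ ramified. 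In particular, if all $p_i$ are odd they are all $\equiv 1\pmod 4$.
\end{enumerate}
That is the entire reduction. The paper's sentence ``if some $\ell_i$ is composite then at least five primes ramify'' is a loose form of the same count. Read literally it depends on the choice of generators, e.g.\ $\QQ(\sqrt{6},\sqrt{3},\sqrt{5},\sqrt{7})$ has a composite generator but only four ramified primes.

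\textbf{The elimination paragraph.} Drop your third paragraph rather than repair it. The deduction is fine: if $K/k$ is unramified quadratic and $h(K)$ is odd, then $h_2(k)=2$. But turning this into an elimination requires knowing which real triquadratic fields have $2$-class number exactly $2$. That is precisely the degree-$8$ classification you yourself note is unavailable. Theorem \ref{triquawithodd} cannot substitute for it: any $k$ admitting the unramified quadratic extension $K$ already has even class number, so it never appears in that list.

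Nor is this paragraph needed. Mouhib's theorem decides every configuration of four distinct primes, so once $K=P(4)$ is established the proof is complete by citation. When you translate his conditions, keep in mind the typo the paper points out in the congruence condition of item $2)$: Mouhib's statement has $-1$ where $1$ is meant.
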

\begin{proof}
Put  $K=\QQ(\sqrt{\ell_1}, \sqrt{\ell_2}, \sqrt{\ell_3}, \sqrt{\ell_4})$, where $\ell_1, \ell_2, \ell_3$ and $\ell_4$ are  distinct positive square-free integers.	Assume that  one of the  integers $\ell_{i}$ is composite. Then, the fact that  all $\ell_i$'s are pairwise distinct, implies that there are at least five finite  primes that ramify in $K/\QQ$; this Theorem \ref{FrohlichResults}, implies that   the class number of $K$ is even. 

It follows  by  Mouhib's Theorem (cf. \cite[The Main Theorem, p. 1206]{paritymohib}) that $h(K)$ is odd if and only if 
$\ell_1, \ell_2, \ell_3$ and $\ell_4$ are  distinct prime numbers satisfying
  one of the conditions in the above theorem.
\end{proof}

\begin{remark}
The congruence condition  $\ell_2\equiv-\ell_3 \equiv -\ell_4 \equiv1 \pmod4$, in the second item of the above theorem,    is written in  \cite{paritymohib}
with $-1$ instead of $1$, but this is nothing more than a typo in the statement of the fundamental result of  \cite{paritymohib}.
\end{remark}

\medskip
This completes  the classification of all real multiquadratic with odd class number!

\section{\bf Appendix: On the Square Root of $\vep_{d}$, the Fundamental Unit of a Real Quadratic Fields $\QQ(\sqrt{d})$}\label{App1}
The determination of the form of the square-root of the fundamental units of real quadratic fields play an important role
in the computation of multiquadratic number fields  (cf. \cite{Benchems,	BS25,BS25I} and the references cited therein).
We outline in this appendix the units used for most of our results. In the section we explore the form of $\sqrt{\varepsilon_{d}}$ when ever $d$ the number of prime divisors of $d$ is $2$, $3$ or $4$. We note that this section is a key step in several proofs in the previous investigations, but it is dependent. 

\medskip
Let us begin by revisiting Lemma 2 in \cite{AzAekhTouCor},
completing its proof
and refining the statement accordingly.
\begin{lemma}\label{ZekTAo}
Let \( d \equiv 1 \pmod{4} \) be a positive square free integer and \( \varepsilon_d = x + y\sqrt{d} \) be the fundamental unit of \( \mathbb{Q}(\sqrt{d}) \). Assume \( N(\varepsilon_d) = 1 \), then
\begin{enumerate}[$1)$]
	\item \( x + 1 \) and \( x - 1 \) are not squares in \( \mathbb{Q} \).
	\item For all integer \(\delta \) dividing \( d \), \( \delta(x + 1) \) and \( \delta(x - 1) \) are not squares in \( \mathbb{Q} \).
\end{enumerate}
\end{lemma}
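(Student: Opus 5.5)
The plan is to argue by contradiction. From a hypothetical square relation I will build an explicit square root of $2\varepsilon_d$, and then rule it out by looking at the prime $2$, using that $d\equiv 1\pmod 4$. Throughout, $x,y$ are integers or semi-integers with $x^2-dy^2=1$, so
\[
(x+1)(x-1)=dy^2 .
\]
Item $1)$ is the case $\delta=1$ of item $2)$ (and $\delta=d$ is symmetric to it), so I treat a general divisor $\delta\mid d$ and put $\delta'=d/\delta$.

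\textbf{Step 1 (reduction to a square root of $2\varepsilon_d$).} Suppose $\delta(x+1)$ is a square in $\QQ$. Then $x+1=\delta a^2$ for some $a\in\QQ^{*}$. Put $b=y/a$. The displayed identity gives $x-1=\delta' b^2$. Consequently
\[
(a\sqrt{\delta}+b\sqrt{\delta'})^2=\delta a^2+\delta' b^2+2ab\sqrt{d}=2x+2y\sqrt d=2\varepsilon_d .
\]
So $\gamma:=a\sqrt\delta+b\sqrt{\delta'}$ is an algebraic integer, since $\gamma^2=2\varepsilon_d$ is integral. It lies in $L=\QQ(\sqrt\delta,\sqrt{\delta'})$, and $L=\QQ(\sqrt d)$ when $\delta\in\{1,d\}$. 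The case where $\delta(x-1)$ is a square is identical after replacing $\varepsilon_d$ by $\varepsilon_d^{-1}=x-y\sqrt d$, which has the same $x$ and $-y$ and swaps the roles of $x+1$ and $x-1$ up to $\delta\leftrightarrow\delta'$.

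\textbf{Step 2 (unramified case).} Since $\delta\delta'=d\equiv1\pmod4$, either $\delta\equiv\delta'\equiv1\pmod4$ or $\delta\equiv\delta'\equiv3\pmod4$. In the first case, which includes $\delta\in\{1,d\}$, the prime $2$ is unramified in $L$. The equality of ideals $(\gamma)^2=(2)$ in $\mathcal O_L$ then forces every prime above $2$ to occur with even exponent in $(2)$. That is impossible because $(2)$ is squarefree. This settles item $1)$ and all $\delta\equiv1\pmod4$.

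\textbf{Step 3 (the case $\delta\equiv\delta'\equiv3\pmod4$; main obstacle).} Here $2$ ramifies in $L$ with index $2$, so the ideal argument gives nothing and a finer $2$-adic argument is needed. First, $N(\varepsilon_d)=1$ with $d\equiv 1\pmod 4$ leaves two sub-cases: $x,y$ both integers, or both semi-integers. In each I will clear denominators, writing $a=\alpha/m$ and $b=\beta/m$, and use the two relations
\[
\delta a^2+\delta' b^2=2x,\qquad ab=y .
\]
Reducing these modulo $8$ gives $3(\alpha^2+\beta^2)\equiv 2xm^2\pmod 8$, and I expect this to conflict with the parity of $x$ and $y$ forced by $x^2-dy^2=1$, with $y$ even when $x,y$ are integers.

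If this congruence bookkeeping becomes messy, I will fall back on a structural argument. Note $\varepsilon_d=\gamma^2/2$, and compute $N_{L/\QQ(\sqrt{\delta})}(\gamma)=\delta a^2-\delta' b^2=2$, an element of $\QQ(\sqrt\delta)$ of norm $4$. I would then check that $\sqrt{2}\,\gamma\in L(\sqrt2)$ and apply Step 2's unramifiedness argument in the field $\QQ(\sqrt{2\delta},\sqrt{2\delta'})$. In that field $2$ is again ramified, so this is exactly where I expect the real difficulty. If necessary I will do the local computation in $\QQ_2(\sqrt{-1},\sqrt{\delta})$ to show that $2\varepsilon_d$ cannot be a square there.
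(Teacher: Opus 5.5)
Your Steps 1 and 2 are correct. From $x+1=\delta a^2$ you do get $x-1=\delta' b^2$ and $\gamma^2=2\varepsilon_d$. For $\delta\equiv\delta'\equiv 1\pmod 4$, the equality $(\gamma)^2=(2)$ in $\mathcal{O}_L$ is impossible because $2$ is unramified in $L$.

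The gap is Step 3: for $\delta\equiv\delta'\equiv 3\pmod 4$ you only state an expectation.
\begin{itemize}
\item The congruence $3(\alpha^2+\beta^2)\equiv 2xm^2\pmod 8$ is not correct in general. It presumes $\delta\equiv\delta'\equiv 3\pmod 8$, while either can be $7\pmod 8$.
\item No contradiction is actually extracted from that congruence.
\item The passage to $\mathbb{Q}(\sqrt{2\delta},\sqrt{2\delta'})$, or to a local computation, is left unexecuted.
\end{itemize}
Note also that the relation $\delta a^2-\delta' b^2=2$ you compute carries no $2$-adic obstruction by itself. When $d\equiv 1\pmod 8$, $d$ is a square in $\mathbb{Q}_2$, so this equation has $2$-adic solutions. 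The contradiction therefore has to come from the parity of $x$ itself, which you mention but never actually use.

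That missing input is elementary, and it makes Steps 2 and 3 unnecessary: look at the $2$-adic valuation $v_2(x\pm 1)$ directly.
\begin{itemize}
\item If $x,y\in\mathbb{Z}$, then $y$ is even, since otherwise $x^2=1+dy^2\equiv 2\pmod 4$. So $x$ is odd, and one of $x+1$, $x-1$ is $\equiv 2\pmod 4$. But by your Step 1, both $x+1=\delta a^2$ and $x-1=\delta' b^2$ have even $2$-adic valuation, because $\delta,\delta'$ are odd. This is a contradiction.
\item If $x=a_0/2$ with $a_0$ odd, then $v_2(x\pm1)=v_2((a_0\pm 2)/2)=-1$. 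So $\delta(x\pm 1)$ again has odd $2$-adic valuation and cannot be a rational square.
\end{itemize}

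This is essentially what the paper's proof does. In the integer case it subtracts $x\pm1=\delta y_1^2$ and $x\mp1=d'y_2^2$ and reduces modulo $4$ to get $y_1^2-y_2^2\equiv 2\pmod 4$, which is impossible. In the semi-integer case it notes that $\delta(x\pm1)=\delta(a\pm2)/2$ has odd numerator and denominator $2$. Neither argument depends on $\delta \bmod 4$, which is why no case split by ramification is needed.
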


\begin{proof}

As $d\equiv1\pmod4$, then $\vep_{d}=x+y\sqrt{d}$, where $x$ and $y$ are integers or semi integers :\\
\noindent\ding{229} If $x$ and $y$ are integers then $x+1$ and $x-1$ are not squares in $\NN$ and  $x^2-1=dy^2$ (cf.  \cite[First item in Lemma 2]{AzAekhTouCor}). Assume that there is a divisor $\delta$ of $d$ such that:  \( \delta(x+1) \) or \( \delta(x-1) \) is a square in \( \mathbb{N} \). Then   we have  

$$\left\{ \begin{array}{ll}
	x\pm 1=\delta y_1^2\\
	x\mp 1=d' y_2^2,
\end{array}\right.$$
where $y_1, y_2$ and $d'$ are integers such that $y=y_1y_2$ and $d=\delta d'$. Then 

$$\delta d'y_2^2=\delta(x\mp1)=\delta(x\pm1\mp2)=\delta (x\pm1)\mp2\delta=\delta^2y_1^2\mp2\delta,$$ then $$dy_2^2=\delta^2y_1^2\mp2\delta.$$
As $d\equiv 1\pmod4$ and $\delta$ divides $d$,  is follows that  $\delta\equiv \pm1\pmod4$. Reducing modulo 4 we get $$  \mp 2 \equiv y_1^2 - y_2^2 \pmod 4 $$
which is an absurd.\\

\noindent\ding{229} If $x$ and $y$ are semi integers : $x=a/2$ and $y=b/2$,  for some odd integers $a$ and $b$ such that $a^2-4=db^2$. First of all if $a-2$ or $a+2$ is a square in $\NN$, then we have:

$$\left\{ \begin{array}{ll}
	a\pm 2=b_1^2\\
	a\mp 2=d b_2^2,
\end{array}\right.$$
where $b_1$ and $b_2$ are two integers such that $b=b_1b_2$. Then $\vep_d=\frac{1}{4}(b_1^2+b_2^2 d+2b\sqrt{d})$, thus $\sqrt{\vep_d}\in \QQ(\sqrt{d})$, which is absurd. Therefore, \(a+2\) and \(a-2\) are not squares in \(\mathbb{N}\); hence, a fortiori, \(x-1\) and \(x+1\) are not squares in \(\mathbb{Q}\).

\medskip

Now  if there is some divisor  $\delta$ of $d$ such that  $\delta(x+1)$ or $\delta(x-1)$ is a square in $\QQ$. Then as $a$ is odd we have $\delta(x\pm1)=\frac{\delta(a\pm2)}{2}$ which can not be a square in $\QQ$ because $\delta(a\pm2)$ and $2$ are coprime and both square free (A rational number is a square in $\mathbb{Q}$ if and only if both its numerator and its denominator are squares in $\mathbb{N}$.).

\end{proof}

\begin{lemma}\cite[Lemma 6]{BS25}\label{unitsrs}
Let \( r, s \) be distinct primes \( \equiv 3 \mod 4 \). Then 
\begin{enumerate}[$1)$]
	\item  $\sqrt{\varepsilon_{rs}} = \frac{1}{2}(a\sqrt{r} + b\sqrt{s}), \text{ for some } a, b \in \mathbb{Z} \text{ with } a \equiv b \mod 2;$   
	moreover, $ \frac{1}{4}(a^2r - b^2s) = (r/s) = -(s/r)$; 
	
	\item  $\sqrt{\varepsilon_{2r}} = a\sqrt{2} + b\sqrt{r}, \text{ for some } a, b \in \mathbb{Z};$   
	moreover,  $a^2 - b^2 = (2/r) = (-r/2);$
	\item  $\sqrt{\varepsilon_r} = \frac{1}{2}(a\sqrt{2r} + b\sqrt{2}), \text{ for some odd } a, b \in \mathbb{Z};$  
	moreover,  $\frac{1}{2}(a^2r - b^2) = -(r/2). $
\end{enumerate}
\end{lemma}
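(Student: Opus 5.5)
The plan is to prove the three items of Lemma \ref{unitsrs} by one and the same method: factor the norm equation of the fundamental unit, and exclude the bad factorizations with Legendre symbols. In each case the units are $\vep_{rs}$, $\vep_{2r}$ and $\vep_r$, in fields $\QQ(\sqrt{m})$ with $m\equiv 3\pmod 4$ or $m\equiv 2\pmod 4$. Write $\vep_m=x+y\sqrt m$ with $x,y\in\ZZ$. Since $m$ is divisible by a prime $\equiv 3\pmod 4$, $-1$ is not a norm from $\QQ(\sqrt m)$, so $N(\vep_m)=1$ and $x^2-1=my^2$. This gives $(x+1)(x-1)=my^2$. Since $\gcd(x+1,x-1)\mid 2$, unique factorization yields a finite list of systems $x\pm1=\delta y_1^2$, $x\mp1=(m/\delta)y_2^2$ (with an extra factor $2$ distributed according to parity), where $\delta$ runs over the positive divisors of $m$ or of $2m$.

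The cases $\delta=1$ and $\delta=m$ (respectively $2m$) are excluded because they would make $\vep_m$ a square in $\QQ(\sqrt m)$, contradicting fundamentality; this is the same argument as in Lemma \ref{ZekTAo}. Each remaining system determines $\sqrt{\vep_m}$ via $\sqrt{\vep_m}=\sqrt{(x+1)/2}+\sqrt{(x-1)/2}$. Subtracting the two equations of the surviving system then gives the stated relations:
\begin{enumerate}[$\bullet$]
\item For $m=rs$: $\delta\in\{r,s\}$, which yields $\sqrt{\vep_{rs}}=\frac12(a\sqrt r+b\sqrt s)$ with $\frac14(a^2r-b^2s)=\pm1$.
\item For $m=2r$: $\delta\in\{2,r\}$ after normalization, which yields $a\sqrt2+b\sqrt r$ with $a^2-b^2=\pm1$ (up to the factor $2$ bookkeeping).
\item For $m=r$: $\delta=2$, which yields $\frac12(a\sqrt{2r}+b\sqrt2)$ with $\frac12(a^2r-b^2)=\pm1$.
\end{enumerate}
The congruence $a\equiv b\pmod 2$ in item 1) comes from the half-integer case $x=\alpha/2$, which is treated analogously with $\alpha\pm2$ in place of $x\pm1$.

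The sign is fixed by reducing modulo the primes involved. For item 1), $a^2r-b^2s=4\epsilon$ gives $\left(\frac{-b^2s}{r}\right)=\left(\frac{4\epsilon}{r}\right)$, i.e. $\left(\frac{-s}{r}\right)=\left(\frac{\epsilon}{r}\right)$. Since $\left(\frac{-1}{r}\right)=-1$, this forces $\epsilon=-\left(\frac{s}{r}\right)$, and quadratic reciprocity for two primes $\equiv3\pmod4$ gives $-\left(\frac{s}{r}\right)=\left(\frac{r}{s}\right)$. Item 2) follows by reducing $a^2-b^2\cdot(\text{stuff})$ modulo $r$, and modulo $8$ for the $2$-part, which identifies $\epsilon$ with $\left(\frac2r\right)$; the equality with $\left(\frac{-r}{2}\right)$ is then the Kronecker symbol evaluation. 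Item 3) is handled by reducing modulo $r$: $-b^2\equiv 2\epsilon$ forces $\left(\frac{-2\epsilon}{r}\right)=1$, hence $\epsilon=-\left(\frac{2}{r}\right)$.

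The main obstacle is the parity bookkeeping at $2$, namely deciding which of $x\pm1$ carries the factor $2$ and how this interacts with the half-integral case when $m\equiv 1\pmod 4$ would occur. Here $m\not\equiv1\pmod 4$, so $x,y$ are integers except in the $rs$ case. For $rs\equiv1\pmod4$ I would follow Lemma \ref{ZekTAo}'s second bullet, working with $\alpha^2-4=rs\beta^2$ and factoring $\alpha\pm2$; this is what produces the $\frac12$ and the condition $a\equiv b\pmod 2$. Alternatively, since the statement is cited from \cite[Lemma 6]{BS25}, one may simply invoke that reference.
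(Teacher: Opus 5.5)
The paper gives no proof of this lemma; it only quotes \cite[Lemma 6]{BS25}. Your strategy is to factor $x^2-1=my^2$, discard the systems that put $\sqrt{\vep_m}$ in $\QQ(\sqrt m)$, and fix the sign with a Legendre symbol. This is the technique the paper uses in Lemmas \ref{ZekTAo} and \ref{unitspp1q1q2}, and item $1)$ works as you say.

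Item $3)$, however, has a genuine error: your exclusion rule points the wrong way there. For $m=r$, the systems with $x$ odd are $x\pm1=2y_1^2,\ x\mp1=2ry_2^2$ and $x\pm1=2ry_1^2,\ x\mp1=2y_2^2$. Read literally, your ``$\delta=2$'' is one of these. They give $\sqrt{\vep_r}=y_1+y_2\sqrt r$ or $y_1\sqrt r+y_2$, so they are exactly the systems to discard.

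What survives is $x$ even, with $x\pm1$ odd and coprime and $x\pm1=y_1^2$, $x\mp1=ry_2^2$. That is $\delta=1$ (or $r$), which your rule says is excluded. No contradiction arises there, because only $2\vep_r$ becomes a square: $\sqrt{2\vep_r}=y_1+y_2\sqrt r$, hence $\sqrt{\vep_r}=\frac12(y_2\sqrt{2r}+y_1\sqrt2)$. For example, $\vep_3=2+\sqrt3$ has $x-1=1^2$ and $\sqrt{\vep_3}=\frac12(\sqrt6+\sqrt2)$.

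So the missing step is to show that $x$ odd is impossible, hence $x$ is even. This is also what gives the oddness of $a$ and $b$ required by the statement, which your sketch never derives. The correct uniform criterion for discarding a system is $\frac{x+1}{2}\in\QQ^{\times2}\cup m\QQ^{\times2}$. It agrees with ``$\delta\neq1,m$'' only after the common factor $2$ has been divided out, which does not happen when $x$ is even.

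In item $2)$, subtracting in the surviving systems $\frac{x\pm1}{2}=2y_1^2$, $\frac{x\mp1}{2}=ry_2^2$ gives $2a^2-rb^2=\pm1$, not $a^2-b^2=\pm1$. The latter is impossible when $ab\neq0$: for $r=3$, $\sqrt{\vep_6}=\sqrt2+\sqrt3$ has $a^2-b^2=0$ but $2a^2-3b^2=-1$. So the relation displayed in the statement is a misprint for $2a^2-rb^2=\left(\frac{2}{r}\right)$, and you should say so rather than appeal to ``factor $2$ bookkeeping''.

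With the correct relation, reducing mod $r$ gives $2a^2\equiv\epsilon$. Hence $\left(\frac{2}{r}\right)=\left(\frac{\epsilon}{r}\right)=\epsilon$, using $\left(\frac{-1}{r}\right)=-1$. No reduction mod $8$ is needed, and $\left(\frac{2}{r}\right)=\left(\frac{-r}{2}\right)$ is just the evaluation of the Kronecker symbol.
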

\begin{remark}
For the other possible congruent cases  of $r$ and $s$ we have the Table \ref{tab7}.
\end{remark}

\begin{lemma}\label{unilem1}
Let $q\equiv3\pmod4$, $r$ and $s$ distinct prime numbers and and
$\varepsilon_{qrs}=x+y\sqrt{qrs}$ where $x,y$ are two integers of semi integers.

\item If $r\equiv s\equiv5\pmod{8}$,
$\left(\dfrac{q}{r}\right)=-\left(\dfrac{q}{s}\right)=-1$, then 
\begin{enumerate}[$1)$]
	\item If $\left(\dfrac{r}{s}\right)=1$, then
	$2s(x+1)$ is a square in $\NN$ and
	$\sqrt{\varepsilon_{qrs}}=y_1\sqrt{s}+y_2\sqrt{qr},$
	\item If $\left(\dfrac{r}{s}\right)=-1$,
	then  $s(x\pm1)$ is a square in $\NN$, and $\sqrt{2\varepsilon_{qrs}}=y_1\sqrt{s}+y_2\sqrt{qr}$.
	
\end{enumerate}
For some integers $y_1$ and $y_2$. \begin{remark}
	For the rest of all possible cases of $q$, $r$ and $s$ we sum up the results concerning $\sqrt{\vep_{qrs}}$ and $\sqrt{\vep_{2qrs}}$ in Table \ref{tab8}. 
\end{remark}

\end{lemma}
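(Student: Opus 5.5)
We will follow the method already used in the proof of Lemma \ref{unitspp1q1q2}. Throughout we assume $r\equiv s\equiv 5\pmod 8$ and $\left(\frac{q}{r}\right)=-\left(\frac{q}{s}\right)=-1$. Since $r$ divides $qrs$ and $\left(\frac{-1}{r}\right)=1$ while $\left(\frac{q}{r}\right)=-1$, the equation $x^2-qrsy^2=-1$ has no solutions. Hence $N(\varepsilon_{qrs})=1$. Also $qrs\equiv 3\pmod 4$, so $x$ and $y$ are integers, and exactly one of $x,y$ is even.

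\textbf{Step 1: the factorisation.} Write $(x+1)(x-1)=qrs\,y^2$. The greatest common divisor of $x+1$ and $x-1$ is $1$ or $2$, according to the parity of $x$. Unique factorisation in $\ZZ$ then gives a system
\[
x\pm1=\delta y_1^2,\qquad x\mp1=\delta' y_2^2 ,
\]
where $\delta\delta'\in\{qrs,\,4qrs\}$ up to moving a factor $2$ between the two equations, and $y=y_1y_2$ or $y=2y_1y_2$ accordingly. By Lemma \ref{ZekTAo}, or rather its analogue for $d\equiv 3\pmod 4$, we discard $\delta\in\{1,qrs\}$ and the versions with $2$: a square-root of $\varepsilon_{qrs}$ lying in $\QQ(\sqrt{qrs})$ is impossible. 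This leaves the candidates $\delta\in\{q,r,s,qr,qs,rs\}$, each possibly multiplied by $2$.

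\textbf{Step 2: eliminate candidates with Legendre symbols.} For each candidate, subtract the two equations to get $\delta y_1^2-\delta' y_2^2=\pm2$ (or $\pm1$ in the odd-$x$ case). Then reduce modulo each prime $\ell\mid\delta'$, giving $\left(\frac{\pm2\delta}{\ell}\right)=1$ or $\left(\frac{\pm\delta}{\ell}\right)=1$, and similarly modulo each prime dividing $\delta$. The inputs are:
\begin{itemize}
\item $\left(\frac{2}{r}\right)=\left(\frac{2}{s}\right)=-1$, because $r\equiv s\equiv5\pmod 8$;
\item $\left(\frac{-1}{r}\right)=\left(\frac{-1}{s}\right)=1$;
\item $\left(\frac{q}{r}\right)=-1$ and $\left(\frac{q}{s}\right)=1$;
\item $\left(\frac{r}{s}\right)$, which is given in each case;
\item $\left(\frac{\cdot}{q}\right)$, obtained from quadratic reciprocity (trivial here since $r,s\equiv1\pmod4$).
\end{itemize}
This should kill every pair except $\{s, qr\}$. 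When $\left(\frac{r}{s}\right)=1$ the surviving pair is $\{2s,2qr\}$ with the sign fixed as $x+1$. When $\left(\frac{r}{s}\right)=-1$ the surviving pair is $\{s,qr\}$ without the factor $2$ (odd $x$), and the sign is undetermined, which is why the statement reads $x\pm1$. The role of $\left(\frac{r}{s}\right)$ is to decide whether a factor $2$ is compatible: we check $\left(\frac{2s}{r}\right)=-\left(\frac{s}{r}\right)$ against the congruence coming from reduction modulo $r$.

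\textbf{Step 3: read off the square roots.} If $2s(x+1)=(2sy_1)^2$ and $2qr(x-1)$ is also a square, then $\varepsilon_{qrs}=\frac{(x+1)+(x-1)}{2}+y\sqrt{qrs}$. This gives directly
\[
\sqrt{\varepsilon_{qrs}}=y_1\sqrt{s}+y_2\sqrt{qr},
\]
using the identity $\sqrt{\varepsilon}=\frac12\bigl(\sqrt{2(x+1)}+\sqrt{2(x-1)}\bigr)$. In the odd case, $s(x\pm1)$ and $qr(x\mp1)$ are squares, and the same identity gives
\[
\sqrt{2\varepsilon_{qrs}}=\sqrt{x+1}+\sqrt{x-1}=y_1\sqrt{s}+y_2\sqrt{qr}.
\]

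\textbf{Main obstacle.} The delicate step is Step 2. We must keep careful track of the power of $2$: when $x$ is even both $x\pm1$ are odd and coprime, but when $x$ is odd $4\mid(x+1)(x-1)$ and one factor is $\equiv2\pmod4$. We also need to make sure that every one of the roughly twelve systems is contradicted by some prime. For the stubborn cases, such as $\{q,rs\}$ versus $\{s,qr\}$, I would first try reduction modulo $r$, since that is where $\left(\frac{q}{r}\right)=-1$ and $\left(\frac{2}{r}\right)=-1$ bite. Only if that fails would I use reduction modulo $q$ together with reciprocity.
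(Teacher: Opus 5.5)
Your route is the paper's own: the paper's proof just invokes the factor-and-eliminate technique of Lemma \ref{unitspp1q1q2}. The survivors you predict are also the correct ones. However, three local justifications are wrong and need repair.

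\textbf{The norm.} $N(\varepsilon_{qrs})=1$ does not follow from reduction modulo $r$: $x^2\equiv-1\pmod r$ is solvable, since $\left(\frac{-1}{r}\right)=1$. Reduce modulo $q$ instead, where $-1$ is a non-residue.

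\textbf{The parity labels are reversed.} The system with no factor $2$ (both $x\pm1$ odd and coprime) is the case $x$ even, $y$ odd. The system $x\pm1=2\delta y_1^2$, $x\mp1=2\delta' y_2^2$, $y=2y_1y_2$ is the case $x$ odd. So part $2)$ of the lemma is the $x$-even case; for example, $q=3$, $r=5$, $s=13$ gives $\varepsilon_{195}=14+\sqrt{195}$ with $s(x-1)=13^2$.

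\textbf{The ``analogue of Lemma \ref{ZekTAo} for $d\equiv3\pmod4$'' is false.} It cannot be used to discard $\delta\in\{1,qrs\}$ in the system without $2$. There, $\delta=1$ gives $\sqrt{2\varepsilon_{qrs}}=y_1+y_2\sqrt{qrs}\in\QQ(\sqrt{qrs})$, not $\sqrt{\varepsilon_{qrs}}$. This does happen in general: $\varepsilon_7=8+3\sqrt7$ has $x+1=9$. Fundamentality only kills $\delta\in\{1,qrs\}$ in the system with the factor $2$. In your setting the other two cases die modulo $r$: $y_1^2-qrs\,y_2^2=\pm2$ would force $\left(\frac{\pm2}{r}\right)=1$, whereas $\left(\frac{\pm2}{r}\right)=\left(\frac{2}{r}\right)=-1$.

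With these fixes, Step 2 does go through using only the symbols you list; no information modulo $8$ is needed. You should write it out, since it is the entire content of the lemma. Put $\rho=\left(\frac{r}{s}\right)$.

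\textbf{System without $2$.} Here $x+1=\delta y_1^2$, $x-1=\delta'y_2^2$, and one needs $\left(\frac{2\delta}{\ell}\right)=1$ for $\ell\mid\delta'$ and $\left(\frac{-2\delta'}{\ell}\right)=1$ for $\ell\mid\delta$.
The pair $\{q,rs\}$ dies modulo $s$. It does not die modulo $r$, where $\left(\frac{\pm2q}{r}\right)=1$, so ``try $r$ first'' fails there.
The pair $\{r,qs\}$ dies because the condition at $r$ forces $\rho=1$ while the condition at $s$ forces $\rho=-1$.
The ordered pair $(\delta,\delta')=(s,qr)$ survives if and only if $\rho=-1$ and $\left(\frac{2}{q}\right)=1$. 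The pair $(qr,s)$ survives if and only if $\rho=-1$ and $\left(\frac{2}{q}\right)=-1$. So the sign in $2)$ is $x+1$ for $q\equiv7\pmod 8$ and $x-1$ for $q\equiv3\pmod 8$; it is not undetermined.

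\textbf{System with $2$.} Here $x+1=2\delta y_1^2$, $x-1=2\delta'y_2^2$, and one needs $\left(\frac{\delta}{\ell}\right)=1$ for $\ell\mid\delta'$ and $\left(\frac{-\delta'}{\ell}\right)=1$ for $\ell\mid\delta$.
The values $\delta=q,r,rs$ die because $\left(\frac{q}{r}\right)=\left(\frac{r}{q}\right)=\left(\frac{rs}{q}\right)=-1$.
The values $\delta=qr,qrs$ die at $\ell=q$, because $\left(\frac{-s}{q}\right)=\left(\frac{-1}{q}\right)=-1$.
The value $\delta=qs$ dies because the conditions at $r$ and at $s$ force $\rho=-1$ and $\rho=1$ respectively.
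The value $\delta=1$ dies by fundamentality.
The value $\delta=s$ survives if and only if $\rho=1$.

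Finally, say explicitly that $\varepsilon_{qrs}$ must satisfy one of these systems, so the unique survivor holds. Your Step 3 identities then give both square-root formulas.
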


\begin{proof}
We use the same techniques as in the proof of 	Lemma 	\ref{unitspp1q1q2}.
\end{proof}

\medskip 	
Moreover, using the same techniques as in the proof of Lemma 	\ref{unitspp1q1q2}, we prove all   results in the following  tables.

\newpage	 
\renewcommand{\arraystretch}{1.3}
\begin{table}[H]
\caption{ \textbf{Forms of $\sqrt{\vep_{rs}}$ and $\sqrt{\vep_{2rs}}$ }}\label{tab7}
\centering
\begin{tabular}[h!]{|c|p{2cm}|c|c|c|c|c|} 
	
	\hline
	&\textbf{\tiny{Legendre Symbole}} & \tiny{Congrunce of $r$ and $s$ $\pmod 8$} & \tiny{Result 1} & \tiny Form of $\sqrt{\vep_{rs}}$ & \tiny Result 2 & \tiny Form of $\sqrt{\vep_{2rs}}$ \\
	\hline
	1& \multirow{14}{2cm}{$\left(\dfrac{r}{s}\right)=1$} & $(5,5)$   & $2r(a\pm1)$ & $[\sqrt{r},\sqrt{s}]$ & **** & **** \\
	\cline{1-1}\cline{3-7}
	2& &\multirow{3}{4cm}{$(1,1)$}& \multirow{3}{2cm}{$2r(a\pm1)$} &  \multirow{3}{2cm}{$[\sqrt{r},\sqrt{s}]$} & $\alpha\pm1$ & $[\sqrt{2},\sqrt{rs}]$\\
	\cline{1-1}\cline{6-7}
	3& & & & & $r(\alpha\pm1)$ & $[\sqrt{2r},\sqrt{s}]$\\
	\cline{1-1}\cline{6-7}
	4& & & & &$2r(\alpha\pm1)$ &$[\sqrt{r},\sqrt{2s}]$ \\
	
	\cline{1-1}\cline{3-7}
	5& & $(5,1)$   & $r(a\pm2)$ & $[\sqrt{r},\sqrt{s}]$ & $r(\alpha\pm1)$  & $[\sqrt{2r},\sqrt{s}]$ \\
	\cline{1-1}\cline{3-7}
	6& & $(1,3)$   & $a-1$ & $[\sqrt{2},\sqrt{2rs}]$ & $\alpha-1$ & $[\sqrt{2},\sqrt{rs}]$ \\
	\cline{1-1}\cline{4-7}
	7& &    & $r(a-1)$ & $[\sqrt{2r},\sqrt{2s}]$ &$2r(\alpha+1)$  &$[\sqrt{r},\sqrt{2s}]$  \\
	
	\cline{1-1}\cline{4-7}
	8& &    &$2r(a+1)$ & $[\sqrt{r},\sqrt{s}]$ & $r(\alpha-1)$ &$[\sqrt{2r},\sqrt{s}]$  \\
	
	\cline{1-1}\cline{3-7}
	9& & $(5,3)$   & $2r(a+1)$ & $[\sqrt{r},\sqrt{s}]$ & $r(\alpha-1)$ & $[\sqrt{2r},\sqrt{s}]$ \\
	\cline{1-1}\cline{3-7}
	10& & $(1,7)$   & $a+1$ & $[\sqrt{2},\sqrt{2rs}]$ & $\alpha+1$ &$[\sqrt{2},\sqrt{rs}]$  \\
	\cline{1-1}\cline{4-7}
	11& &    &  $r(a+1)$ & $[\sqrt{2r},\sqrt{2s}]$ &$2r(\alpha+1)$  & $[\sqrt{r},\sqrt{2s}]$ \\
	\cline{1-1}\cline{4-7}
	12& &    & $2r(a+1)$  & $[\sqrt{r},\sqrt{s}]$ &$r(\alpha+1)$ &$[\sqrt{2r},\sqrt{s}]$  \\
	\cline{1-1}\cline{3-7}
	13& & $(5,7)$    & $2r(a+1)$ & $[\sqrt{r},\sqrt{s}]$ & $r(\alpha+1)$ & $[\sqrt{2r},\sqrt{s}]$  \\
	\cline{1-1}\cline{3-7}
	14& & $(3,3)$    & Lemma \ref{unitsrs} & Lemma \ref{unitsrs} & $\alpha-1$ & $[\sqrt{2},\sqrt{rs}]$ \\
	\cline{1-1}\cline{3-7}
	15& & $(3,7)$    & Lemma \ref{unitsrs} & Lemma \ref{unitsrs} & $r(\alpha+1)$ & $[\sqrt{2r},\sqrt{s}]$ \\
	\cline{1-1}\cline{3-7}
	16& & $(7,3)$    & Lemma \ref{unitsrs} & Lemma \ref{unitsrs} & $2r(\alpha+1)$ & $[\sqrt{r},\sqrt{2s}]$ \\
	\hline
	17& \multirow{5}{2cm}{$\left(\dfrac{r}{s}\right)=-1$} & $(1,3)$   & $a-1$ & $[\sqrt{2},\sqrt{2rs}]$ & $\alpha-1$ & $[\sqrt{2},\sqrt{rs}]$  \\
	\cline{1-1}\cline{3-7}
	18& & $(5,3)$   & $r(a+1)$ &$[\sqrt{2r},\sqrt{2s}]$ & $2r(\alpha-1)$ & $[\sqrt{r},\sqrt{2s}]$ \\
	\cline{1-1}\cline{3-7}
	19& & $(1,7)$   & $a+1$  &$[\sqrt{2},\sqrt{2rs}]$  & $\alpha+1$&  $[\sqrt{2},\sqrt{rs}]$\\
	\cline{1-1}\cline{3-7}
	20& & $(5,7)$   & $r(a-1)$ & $[\sqrt{2r},\sqrt{2s}]$ & $2r(\alpha-1)$ & $[\sqrt{r},\sqrt{2s}]$ \\
	\cline{1-1}\cline{3-7}
	21& & $(3,3)$   & Lemma \ref{unitsrs} & Lemma \ref{unitsrs} & $\alpha-1$ & $[\sqrt{2},\sqrt{rs}]$ \\

	\hline
	
\end{tabular}

\end{table}

\renewcommand{\arraystretch}{1.3}

\centering
\begin{longtable}[h!]{|c|p{2.5cm}|p{2.5cm}|c|c|c|c|} 
\caption{\textbf{Some forms of $\sqrt{\vep_{qrs}}$ and $\sqrt{\vep_{2qrs}}$ }}\label{tab8} \\
\hline 
&\tiny{$r,s,q\equiv \pmod 8$} & \tiny{$(\left(\dfrac{q}{r}\right),\left(\dfrac{q}{s}\right),\left(\dfrac{r}{s}\right))$}  & Result 1 & Form of $\sqrt{\vep_{qrs}}$ & Result 2  & Form of $\sqrt{\vep_{2qrs}}$ \\
\hline
1&\multirow{12}{3.5cm}{$(5,7,3)$}& \multirow{3}{3.5cm}{$(1,1,1)$}   & $2q(x+1)$ & $[\sqrt{q},\sqrt{rs}]$ &  &  \\
\cline{1-1}\cline{4-5}
2& & & $2r(x+1)$ & $[\sqrt{r},\sqrt{qs}]$& $2s(z-1)$& $[\sqrt{s},\sqrt{2qr}]$  \\
\cline{1-1}\cline{4-5}
3&& & $2s(x-1)$ & $[\sqrt{s},\sqrt{qr}]$& & \\ 
\cline{1-1}\cline{3-7}

4&&$(1,1,-1)$ & $2q(x+1)$ & $[\sqrt{q},\sqrt{rs}]$& $s(z+1)$& $[\sqrt{2s},\sqrt{qr}]$  \\
\cline{1-1}\cline{3-7}

5&& $(-1,-1,1)$& $2s(x+1)$ & $[\sqrt{s},\sqrt{qr}]$& $2s(z+1)$& $[\sqrt{s},\sqrt{2qr}]$  \\
\cline{1-1}\cline{3-7}

6&&$(-1,-1,-1)$ & $2r(x-1)$ & $[\sqrt{r},\sqrt{qs}]$& $s(z-1)$& $[\sqrt{2s},\sqrt{qr}]$  \\ 
\cline{1-1}\cline{3-7}

7&&$(-1,1,1)$ & $2s(x-1)$ & $[\sqrt{s},\sqrt{qr}]$& $2s(z-1)$& $[\sqrt{s},\sqrt{2qr}]$  \\
\cline{1-1}\cline{3-7}

8&&$(-1,1,-1)$ & $2r(x-1)$ & $[\sqrt{r},\sqrt{qs}]$&$s(z+1)$  & $[\sqrt{2s},\sqrt{qr}]$  \\
\cline{1-1}\cline{3-7}

9&& \multirow{3}{3.5cm}{$(1,-1,1)$}   & $2s(x+1)$ & $[\sqrt{s},\sqrt{qr}]$ &  &  \\
\cline{1-1}\cline{4-5}
10&& & $2r(x+1)$ & $[\sqrt{r},\sqrt{qs}]$& $2s(z+1)$& $[\sqrt{s},\sqrt{2qr}]$  \\
\cline{1-1}\cline{4-5}
11&& & $2q(x-1)$ & $[\sqrt{q},\sqrt{rs}]$& & \\ 
\cline{1-1}\cline{3-7}

12&& $(1,-1,-1)$& $2q(x-1)$ & $[\sqrt{q},\sqrt{rs}]$& $s(z-1)$& $[\sqrt{2s},\sqrt{qr}]$  \\

\hline
\hline

13&\multirow{16}{3.5cm}{$(5,3,3)$}& \multirow{3}{3.5cm}{$(1,1,1)$}   & $q(x+2)$ & $[\sqrt{q},\sqrt{rs}]$ &  &  \\
\cline{1-1}\cline{4-5}
14&& & $s(x-2)$ & $[\sqrt{s},\sqrt{qr}]$& $r(z-1)$& $[\sqrt{2r},\sqrt{qs}]$  \\
\cline{1-1}\cline{4-5}
15&& & $r(x+2)$ & $[\sqrt{r},\sqrt{qs}]$& & \\ 
\cline{1-1}\cline{3-7}

16&& $(-1,-1,1)$   & $s(x+2)$ & $[\sqrt{s},\sqrt{qr}]$ & $q(z+1)$ & $[\sqrt{2q},\sqrt{rs}]$ \\
\cline{1-1}\cline{3-7}

17&&$(-1,1,1)$   & $s(x-2)$ & $[\sqrt{s},\sqrt{qr}]$ & $q(z-1)$ & $[\sqrt{2q},\sqrt{rs}]$ \\

\cline{1-1}\cline{3-7}
18&& \multirow{3}{3.5cm}{$(1,-1,1)$}   & $r(x+2)$ & $[\sqrt{r},\sqrt{qs}]$ & $r(z-1)$ & $[\sqrt{2r},\sqrt{qs}]$ \\
\cline{1-1}\cline{4-5}
19&& & $s(x+2)$ & $[\sqrt{s},\sqrt{qr}]$& &   \\
\cline{1-1}\cline{4-5}
20&& & $q(x-2)$ & $[\sqrt{q},\sqrt{rs}]$& &   \\

\cline{1-1}\cline{3-7}

21&&$(1,1,-1)$   & $q(x+2)$ & $[\sqrt{q},\sqrt{rs}]$ & $s(z+1)$ & $[\sqrt{2s},\sqrt{qr}]$ \\

\cline{1-1}\cline{3-7}

22&&\multirow{3}{3cm}{$(-1,-1,-1)$}   & \multirow{3}{3cm}{$r(x-2)$}  &\multirow{3}{3cm}{$[\sqrt{r},\sqrt{qs}]$}  & $q(z+1)$ & $[\sqrt{2q},\sqrt{rs}]$ \\
\cline{1-1}\cline{6-7}
23&&  &  &  & $r(z+1)$ & $[\sqrt{2r},\sqrt{qs}]$ \\
\cline{1-1}\cline{6-7}
24&&  &  &  & $s(z-1)$ & $[\sqrt{2s},\sqrt{qr}]$ \\

\cline{1-1}\cline{3-7}

25&& \multirow{3}{3cm}{$(-1,1,-1)$}   &\multirow{3}{3cm}{$r(x-2)$}  & \multirow{3}{3cm}{$[\sqrt{r},\sqrt{qs}]$} & $q(z-1)$ & $[\sqrt{2q},\sqrt{rs}]$ \\
\cline{1-1}\cline{6-7}
26&& & & & $r(z+1)$ & $[\sqrt{2r},\sqrt{qs}]$ \\
\cline{1-1}\cline{6-7}
27&& & & & $s(z+1)$ & $[\sqrt{2s},\sqrt{qr}]$ \\

\cline{1-1}\cline{3-7}

28&&$(1,-1,-1)$   & $q(x-2)$ & $[\sqrt{q},\sqrt{rs}]$ & $s(z-1)$ & $[\sqrt{2s},\sqrt{qr}]$ \\

\hline
\hline

29&\multirow{12}{3.5cm}{$(5,3,7)$}& \multirow{3}{3.5cm}{$(1,1,1)$}   & $2q(x+1)$ & $[\sqrt{q},\sqrt{rs}]$ &  &  \\
\cline{1-1}\cline{4-5}
30&& & $2r(x+1)$ & $[\sqrt{r},\sqrt{qs}]$& $2q(z+1)$& $[\sqrt{q},\sqrt{2rs}]$  \\
\cline{1-1}\cline{4-5}
31&& & $2s(x-1)$ & $[\sqrt{s},\sqrt{qr}]$& & \\ 
\cline{1-1}\cline{3-7}

32&& \multirow{3}{3.5cm}{$(1,-1,1)$}   & $2q(x-1)$ & $[\sqrt{q},\sqrt{rs}]$ & $2q(z-1)$ & $[\sqrt{q},\sqrt{2rs}]$ \\
\cline{1-1}\cline{4-5}
33&& & $2r(x+1)$ & $[\sqrt{r},\sqrt{qs}]$& &   \\
\cline{1-1}\cline{4-5}
34&& & $2s(x+1)$ & $[\sqrt{s},\sqrt{qr}]$& &   \\

\cline{1-1}\cline{3-7}

35&&$(-1,1,1)$ & $2s(x-1)$ & $[\sqrt{s},\sqrt{qr}]$& $q(z-1)$& $[\sqrt{2q},\sqrt{rs}]$  \\ 
\cline{1-1}\cline{3-7}

36&&$(1,1,-1)$ & $2q(x+1)$ & $[\sqrt{q},\sqrt{rs}]$& $2q(z+1)$& $[\sqrt{q},\sqrt{2rs}]$  \\

\cline{1-1}\cline{3-7}

37&&$(-1,-1,-1)$ & $2r(x-1)$ & $[\sqrt{r},\sqrt{qs}]$& $q(z+1)$ & $[\sqrt{2q},\sqrt{rs}]$ \\

\cline{1-1}\cline{3-7}

38&&$(-1,1,-1)$ & $2r(x-1)$ & $[\sqrt{r},\sqrt{qs}]$& $q(z-1)$& $[\sqrt{2q},\sqrt{rs}]$  \\
\cline{1-1}\cline{3-7}

39&&$(1,-1,-1)$ & $2q(x-1)$ & $[\sqrt{q},\sqrt{rs}]$& $2q(z-1)$& $[\sqrt{q},\sqrt{2rs}]$  \\
\cline{1-1}\cline{3-7}

40&& $(-1,-1,1)$  & $2s(x+1)$ & $[\sqrt{s},\sqrt{qr}]$ & $q(z+1)$& $[\sqrt{2q},\sqrt{rs}]$  \\

\hline
\hline

41&\multirow{6}{3.5cm}{$(5,7,7)$}& $(1,1,-1)$& $q(x+2)$& $[\sqrt{q},\sqrt{rs}]$& $2q(z+1)$&$[\sqrt{q},\sqrt{2rs}]$ \\
\cline{1-1}\cline{3-7}
42&& $(1,-1,-1)$& $q(x-2)$& $[\sqrt{q},\sqrt{rs}]$&$2q(z-1)$ &$[\sqrt{q},\sqrt{2rs}]$ \\
\cline{1-1}\cline{3-7}
43&& $(-1,1,1)$& $s(x-2)$& $[\sqrt{s},\sqrt{qr}]$&$2s(z-1)$ &$[\sqrt{s},\sqrt{2qr}]$ \\
\cline{1-1}\cline{3-7}
44&& $(-1,1,-1)$& $r(x-2)$& $[\sqrt{r},\sqrt{qs}]$& $r(z-1)$&$[\sqrt{2r},\sqrt{qs}]$ \\
\cline{1-1}\cline{3-7}
45&& $(-1,-1,1)$& $s(x+2)$& $[\sqrt{s},\sqrt{qr}]$&$2s(z+1)$ &$[\sqrt{s},\sqrt{2qr}]$ \\
\cline{1-1}\cline{3-7}
46&& $(-1,-1,-1)$& $r(x-2)$& $[\sqrt{r},\sqrt{qs}]$&$r(z-1)$ &$[\sqrt{2r},\sqrt{qs}]$ \\
\hline
\hline
47&\multirow{6}{3.5cm}{$(1,7,3)$}& $(1,1,-1)$& $2q(x+1)$& $[\sqrt{q},\sqrt{rs}]$& $q(z+1)$&$[\sqrt{2q},\sqrt{rs}]$ \\
\cline{1-1}\cline{3-7}
48&& $(1,-1,-1)$& $2q(x-1)$& $[\sqrt{q},\sqrt{rs}]$&$q(z-1)$ &$[\sqrt{2q},\sqrt{rs}]$\\
\cline{1-1}\cline{3-7}
49&& $(-1,1,1)$& $2s(x-1)$& $[\sqrt{s},\sqrt{qr}]$&$2s(z-1)$ &$[\sqrt{s},\sqrt{2qr}]$ \\
\cline{1-1}\cline{3-7}
50&& $(-1,1,-1)$& $2r(x-1)$& $[\sqrt{r},\sqrt{qs}]$& $2r(z-1)$&$[\sqrt{r},\sqrt{2qs}]$ \\
\cline{1-1}\cline{3-7}
51&& $(-1,-1,1)$& $2s(x+1)$& $[\sqrt{s},\sqrt{qr}]$&$2s(z+1)$ &$[\sqrt{s},\sqrt{2qr}]$ \\
\cline{1-1}\cline{3-7}
52&& $(-1,-1,-1)$& $2r(x-1)$& $[\sqrt{r},\sqrt{qs}]$&$2r(z-1)$ &$[\sqrt{r},\sqrt{2qs}]$ \\
\hline
\hline
53&\multirow{6}{3.5cm}{$(1,3,3)$}& $(1,1,-1)$& $2q(x+1)$& $[\sqrt{q},\sqrt{rs}]$& $z-1$&$[\sqrt{2},\sqrt{qrs}]$ \\
\cline{1-1}\cline{3-7}
54&& $(1,-1,-1)$& $2q(x-1)$& $[\sqrt{q},\sqrt{rs}]$&$z-1$ &$[\sqrt{2},\sqrt{qrs}]$ \\
\cline{1-1}\cline{3-7}
55&& $(-1,1,1)$& $2s(x-1)$& $[\sqrt{s},\sqrt{qr}]$&$z-1$ &$[\sqrt{2},\sqrt{qrs}]$ \\
\cline{1-1}\cline{3-7}
56&& \multirow{3}{3cm}{$(-1,1,-1)$} & \multirow{3}{3cm}{$2r(x-1)$} & \multirow{3}{3cm}{$[\sqrt{r},\sqrt{qs}]$}&$z-1$ & $[\sqrt{2},\sqrt{qrs}]$\\
\cline{1-1}\cline{6-7}
57& & & & & $r(z+1)$& $[\sqrt{2r},\sqrt{qs}]$ \\
\cline{1-1}\cline{6-7}
58& & & & &$2r(z-1)$ & $[\sqrt{r},\sqrt{2qs}]$ \\

\cline{1-1}\cline{3-7}
59&& $(-1,-1,1)$& $2s(x+1)$& $[\sqrt{s},\sqrt{qr}]$&$z-1$ &$[\sqrt{2},\sqrt{qrs}]$ \\
\cline{1-1}\cline{3-7}
60&&  \multirow{3}{3cm}{$(-1,-1,-1)$}  & \multirow{3}{3cm}{$2r(x-1)$}&  \multirow{3}{3cm}{$[\sqrt{r},\sqrt{qs}]$}& $z-1$ & $[\sqrt{2},\sqrt{qrs}]$ \\
\cline{1-1}\cline{6-7}
61& & & & & $r(z+1)$&$[\sqrt{2r},\sqrt{qs}]$ \\
\cline{1-1}\cline{6-7}
62& & & & & $2r(z-1)$&$[\sqrt{r},\sqrt{2qs}]$ \\

\hline
\hline
63&\multirow{6}{3.5cm}{$(1,3,7)$}& $(1,1,-1)$& $2q(x+1)$& $[\sqrt{q},\sqrt{rs}]$& $2q(z+1)$&$[\sqrt{q},\sqrt{2rs}]$ \\
\cline{1-1}\cline{3-7}
64& &$(1,-1,-1)$& $2q(x-1)$& $[\sqrt{q},\sqrt{rs}]$&$2q(z-1)$ &$[\sqrt{q},\sqrt{2rs}]$ \\
\cline{1-1}\cline{3-7}
65&& $(-1,1,1)$& $2s(x-1)$& $[\sqrt{s},\sqrt{qr}]$&$s(z-1)$ &$[\sqrt{2s},\sqrt{qr}]$ \\
\cline{1-1}\cline{3-7}
66&& $(-1,1,-1)$& $2r(x-1)$& $[\sqrt{r},\sqrt{qs}]$& $2r(z-1)$&$[\sqrt{r},\sqrt{2qs}]$ \\
\cline{1-1}\cline{3-7}
67& &$(-1,-1,1)$& $2s(x+1)$& $[\sqrt{s},\sqrt{qr}]$&$s(z+1)$ &$[\sqrt{2s},\sqrt{qr}]$ \\
\cline{1-1}\cline{3-7}
68& &$(-1,-1,-1)$& $2r(x-1)$& $[\sqrt{r},\sqrt{qs}]$&$2r(z-1)$ &$[\sqrt{r},\sqrt{2qs}]$ \\
\hline		\hline

69 &\multirow{18}{3.5cm}{$(1,7,7)$}&  \multirow{3}{3cm}{$(1,1,-1)$}&\multirow{3}{3cm}{$2q(x+1)$} &\multirow{3}{3cm}{$[\sqrt{q},\sqrt{rs}]$} & $z+1$& $[\sqrt{2},\sqrt{qrs}]$\\
\cline{1-1}\cline{6-7}
70& & & & &$q(z+1)$ &$[\sqrt{2q},\sqrt{rs}]$ \\
\cline{1-1}\cline{6-7}
71& & & & & $2q(z+1)$&$[\sqrt{q},\sqrt{2rs}]$ \\

\cline{1-1}\cline{3-7}
72&& \multirow{3}{3cm}{$(1,-1,-1)$} &\multirow{3}{3cm}{$2q(x-1)$} &\multirow{3}{3cm}{$[\sqrt{q},\sqrt{rs}]$} & $z+1$&$[\sqrt{2},\sqrt{qrs}]$ \\
\cline{1-1}\cline{6-7}
73& & & & &$q(z-1)$ &$[\sqrt{2q},\sqrt{rs}]$ \\
\cline{1-1}\cline{6-7}
74& & & & & $2q(z-1)$&$[\sqrt{q},\sqrt{2rs}]$ \\

\cline{1-1}\cline{3-7}
75&& \multirow{3}{3cm}{$(-1,1,1)$}&\multirow{3}{3cm}{$2s(x-1)$} & \multirow{3}{3cm}{$[\sqrt{s},\sqrt{qr}]$}& $z+1$&$[\sqrt{2},\sqrt{qrs}]$ \\
\cline{1-1}\cline{6-7}
76& & & & & $2s(z-1)$& $[\sqrt{s},\sqrt{2qr}]$\\
\cline{1-1}\cline{6-7}
77& & & & & $s(z-1)$& $[\sqrt{2s},\sqrt{qr}]$\\

\cline{1-1}\cline{3-7}
78&& \multirow{3}{3cm}{$(-1,1,-1)$}&\multirow{3}{3cm}{$2r(x-1)$} & \multirow{3}{3cm}{$[\sqrt{r},\sqrt{qs}]$}&$z+1$ &$[\sqrt{2},\sqrt{qrs}]$ \\
\cline{1-1}\cline{6-7}
79& & & & &$2r(z-1)$ &$[\sqrt{r},\sqrt{2qs}]$ \\
\cline{1-1}\cline{6-7}
80& & & & &$r(z-1)$ &$[\sqrt{2r},\sqrt{qs}]$ \\

\cline{1-1}\cline{3-7}
81&& \multirow{3}{3cm}{$(-1,-1,1)$} & \multirow{3}{3cm}{$2s(x+1)$}& \multirow{3}{3cm}{$[\sqrt{s},\sqrt{qr}]$}&$z+1$&$[\sqrt{2},\sqrt{qrs}]$ \\
\cline{1-1}\cline{6-7}
82& & & & &$2s(z+1)$ &$[\sqrt{s},\sqrt{2qr}]$ \\
\cline{1-1}\cline{6-7}
83& & & & & $s(z+1)$&$[\sqrt{2s},\sqrt{qr}]$ \\

\cline{1-1}\cline{3-7}
84&& \multirow{3}{3cm}{$(-1,-1,-1)$} &\multirow{3}{3cm}{$2r(x-1)$} & \multirow{3}{3cm}{$[\sqrt{r},\sqrt{qs}]$} &$z+1$ &$[\sqrt{2},\sqrt{qrs}]$ \\
\cline{1-1}\cline{6-7}
85& & & & & $2r(z-1)$&$[\sqrt{r},\sqrt{2qs}]$ \\
\cline{1-1}\cline{6-7}
86& & & & & $r(z-1)$&$[\sqrt{2r},\sqrt{qs}]$ \\
\hline
\hline

87 &$(5,5,3 \;\text{or}\; 7)$ & $(-1,1,1)$&$2s(x+1)$ & $[\sqrt{s},\sqrt{qr}]$& $2r(z-1)$& $[\sqrt{r},\sqrt{2qs}]$ \\
\hline		\hline

88 &$(5,5,3)$ & $(-1,1,-1)$&$s(x-1)$ & $[\sqrt{2s},\sqrt{2qr}]$ & $r(z+1)$& $[\sqrt{2r},\sqrt{qs}]$\\ 

\hline		\hline

89 &$(5,5,7 )$ & $(-1,1,-1)$&$s(x+1)$ &$[\sqrt{2s},\sqrt{2qr}]$ & $r(z-1)$& $[\sqrt{2r},\sqrt{qs}]$\\ 
\hline		\hline

90 &$(3,7,3 )$ & $(-1,-1,1)$& $2q(x-1)$&$[\sqrt{q},\sqrt{rs}]$ & $2r(z+1)$& $[\sqrt{r},\sqrt{2qs}]$\\

\hline		\hline

91 &\multirow{3}{3.5cm}{$(3,7,3)$} & \multirow{3}{3.5cm}{$(-1,-1,-1)$}& $r(x-1)$&$[\sqrt{2r},\sqrt{2qs}]$ &  & \\
\cline{1-1}\cline{4-5}
92 && & $s(x-1)$ & $[\sqrt{2s},\sqrt{2qr}]$ &$s(z-1)$ &$[\sqrt{2s},\sqrt{qr}]$ \\
\cline{1-1}	\cline{4-5}
93 && & $2q(x-1)$ &  $[\sqrt{q},\sqrt{rs}]$ & &\\

\hline		\hline

94&\multirow{3}{3.5cm}{$(3,3,7 )$} & $(-1,1,1)$&$2s(x-1)$ &$[\sqrt{s},\sqrt{qr}]$ & $2r(z+1)$& $[\sqrt{r},\sqrt{2qs}]$\\
\cline{1-1}\cline{3-7}
95	&& $(1,-1,1)$&$r(x-1)$ &$[\sqrt{2r},\sqrt{2qs}]$ & $s(z+1)$& $[\sqrt{2s},\sqrt{qr}]$\\

\cline{1-1}\cline{3-7}
96&& $(-1,1,-1)$&$s(x-1)$ &$[\sqrt{2s},\sqrt{2qr}]$ & $r(z+1)$& $[\sqrt{2r},\sqrt{qs}]$\\

\hline		\hline

97&\multirow{3}{3.5cm}{$(7,3,3 )$} & $(-1,1,1)$& $q(x-1)$ &$[\sqrt{2q},\sqrt{2rs}]$ & $q(z-1)$& $[\sqrt{2q},\sqrt{rs}]$\\
\cline{1-1}\cline{4-5}
98&& &$r(x-1)$ & $[\sqrt{2r},\sqrt{2qs}]$& $r(z-1)$& $[\sqrt{2r},\sqrt{qs}]$\\
\cline{1-1}\cline{4-5}
99&& &$2s(x-1)$ &$[\sqrt{s},\sqrt{qr}]$ & & \\
\hline		\hline

100&$(3,7,3)$ & $(-1,1,-1)$& $r(x-1)$& $[\sqrt{2r},\sqrt{2qs}]$& $q(z+1)$& $[\sqrt{2q},\sqrt{rs}]$\\

\hline		\hline

101&\multirow{3}{3.5cm}{$(3,7,3)$} & $(-1,1,1)$&$2s(x-1)$ & $[\sqrt{s},\sqrt{qr}]$& $q(z+1)$& $[\sqrt{2q},\sqrt{rs}]$\\
\cline{1-1}\cline{6-7}
102&& & & & $2r(z+1)$& $[\sqrt{r},\sqrt{2qs}]$\\
\cline{1-1}\cline{6-7}
103&& & & & $2s(z-1)$& $[\sqrt{s},\sqrt{2qr}]$\\
\hline		\hline

104	&$(7,3,3)$ & $(1,1,1)$&$2s(x-1)$ &$[\sqrt{s},\sqrt{qr}]$ & $2q(z+1)$& $[\sqrt{q},\sqrt{2rs}]$\\
\hline		\hline

105	& \multirow{3}{3.5cm}{$(7,3,3)$}  &$(1,1,-1)$&$2r(x-1)$ &$[\sqrt{r},\sqrt{qs}]$ & $s(z+1)$& $[\sqrt{2s},\sqrt{qr}]$\\
\cline{1-1}\cline{6-7}
106&& & & & $2r(z-1)$& $[\sqrt{r},\sqrt{2qs}]$\\
\cline{1-1}\cline{6-7}
107&	& & & & $2q(z+1)$& $[\sqrt{q},\sqrt{2rs}]$\\
\hline		\hline

108&$(7,7,7)$ &$(-1,1,-1)$&$x+1$ & $[\sqrt{2},\sqrt{2qrs}]$&$z+1$&$[\sqrt{2},\sqrt{qrs}]$ \\

\hline		\hline

109&$(7,3,7)$ & $(-1,1,-1)$&$r(x+1)$ &$[\sqrt{2r},\sqrt{2qs}]$ &$r(z+1)$ &$[\sqrt{2r},\sqrt{qs}]$ \\

\hline 		\hline

110&$(3,3,3)$ &$(-1,1,-1)$&$x-1$ &$[\sqrt{2},\sqrt{2qrs}]$ &$z-1$ & $[\sqrt{2},\sqrt{qrs}]$\\

\hline		\hline

111&$(7,3,7)$ & $(1,1,-1)$& $2r(x-1)$& $[\sqrt{r},\sqrt{qs}]$& $2r(z-1)$& $[\sqrt{r},\sqrt{2qs}]$\\
\hline		\hline

112&\multirow{3}{3.5cm}{$(7,7,3)$} &\multirow{3}{3.5cm}{$(1,1,-1)$} &$q(x+1)$ &$[\sqrt{2q},\sqrt{2rs}]$ &$s(z+1)$ &$[\sqrt{2s},\sqrt{qr}]$ \\
\cline{1-1}\cline{4-7}
113	&& &$s(x+1)$ &$[\sqrt{2s},\sqrt{2qr}]$ & $2q(z+1)$&$[\sqrt{q},\sqrt{2rs}]$\\
\cline{1-1}\cline{4-7}
114&& & $2r(x-1)$&$[\sqrt{r},\sqrt{qs}]$ &$2r(z-1)$ &$[\sqrt{r},\sqrt{2qs}]$\\
\hline		\hline

115&$(3,7,3)$ & $(1,1,-1)$&$2r(x-1)$ &$[\sqrt{r},\sqrt{qs}]$ &$2q(z+1)$ &$[\sqrt{q},\sqrt{2rs}]$ \\
\hline		\hline

116&\multirow{3}{3.5cm}{$(3,3,7)$} &\multirow{3}{3.5cm}{$(1,1,-1)$}  &$q(x-1)$ &$[\sqrt{2q},\sqrt{2rs}]$ & & \\
\cline{1-1}\cline{4-5}
117	&&&$s(x-1)$&$[\sqrt{2s},\sqrt{2qr}]$&$q(x-1)$&$[\sqrt{2q},\sqrt{rs}]$\\
\cline{1-1}	\cline{4-5}
118&	&&$2r(x-1)$&$[\sqrt{r},\sqrt{qs}]$&&\\
\hline		\hline

119&$(1,5,7)$ & $(-1,1,-1)$& $r(x-1)$&$[\sqrt{2r},\sqrt{2qs}]$ & $r(z-1)$ &$[\sqrt{2r},\sqrt{qs}]$ \\

\hline		\hline

120&$(1,5,3)$ & $(-1,1,-1)$& $r(x+1)$& $[\sqrt{2r},\sqrt{2qs}]$&$r(z+1)$ &$[\sqrt{2r},\sqrt{qs}]$ \\
\hline		\hline

121&\multirow{3}{3.5cm}{$(5,5,7)$} & \multirow{3}{3.5cm}{$(-1,-1,-1)$}&\multirow{3}{3cm}{$q(x-1)$} & \multirow{3}{3cm}{$[\sqrt{2q},\sqrt{2rs}]$} & $q(z-1)$&$[\sqrt{2q},\sqrt{rs}]$ \\
\cline{1-1}\cline{6-7}
122&& & & & $s(z-1)$&$[\sqrt{2s},\sqrt{qr}]$ \\
\cline{1-1}\cline{6-7}
123&& & & & $r(z-1)$&$[\sqrt{2r},\sqrt{qs}]$ \\
\hline		\hline

124&$(1,5,3)$ & $(-1,-1,-1)$&$r(x+1)$ & $[\sqrt{2r},\sqrt{2qs}]$& $r(z+1)$&$[\sqrt{2r},\sqrt{qs}]$ \\
\hline		\hline

125&$(5,5,7)$ & $(1,-1,-1)$& $r(x+1)$&$[\sqrt{2r},\sqrt{2qs}]$ & $s(z-1)$&$[\sqrt{2s},\sqrt{qr}]$ \\
\hline		\hline

126&$(5,5,3)$ & $(1,-1,-1)$&$r(x-1)$ &$[\sqrt{2r},\sqrt{2qs}]$ &$s(z+1)$ &$[\sqrt{2s},\sqrt{qr}]$ \\

\hline
\hline

127&$(5,1,7)$ & $(1,1,-1)$&$2q(x-1)$ &$[\sqrt{q},\sqrt{rs}]$ &$2q(z-1)$ &$[\sqrt{q},\sqrt{2rs}]$ \\

\hline		\hline

128&$(5,1,3)$ & $(1,-1,-1)$&$s(x+1)$ & $[\sqrt{2s},\sqrt{2qr}]$ &$s(z+1)$ &$[\sqrt{2s},\sqrt{qr}]$ \\

\hline		\hline

129	&$(1,5,7)$ & $(-1,-1,-1)$&$r(x-1)$ &$[\sqrt{2r},\sqrt{2qs}]$ &$r(z-1)$ &$[\sqrt{2r},\sqrt{qs}]$ \\

\hline		\hline

130	&$(1,5,3)$ & $(-1,-1,-1)$& $r(x+1)$& $[\sqrt{2r},\sqrt{2qs}]$&$r(z+1)$ &$[\sqrt{2r}, \sqrt{qs}]$ \\

\hline

\end{longtable}


\end{document}